\newtheorem{thm}{Theorem}[section]
\newtheorem{lemma}[thm]{Lemma}
\newtheorem{cor}[thm]{Corollary}
\newtheorem{claim}{Claim}[thm]
\newtheorem{fact}[thm]{Fact}
\newtheorem*{thmaa}{Theorem~A}
\newtheorem*{thmb}{Theorem~B}
\theoremstyle{definition}
\newtheorem{defn}[thm]{Definition}
\theoremstyle{remark}
\newtheorem{remark}[thm]{Remark}
\DeclareMathOperator{\acc}{acc}
\DeclareMathOperator{\pr}{pr}
\DeclareMathOperator{\dl}{Dl}
\newcommand{\s}{\subseteq}
\renewcommand{\mid}{\mathrel{|}\allowbreak}
\renewcommand{\restriction}{\mathbin\upharpoonright}
\newcommand{\redub}{\hookrightarrow_B}                
\newcommand{\reduc}{\hookrightarrow_c}                
\title{on Unsuperstable theories in GDST}
\author{Miguel Moreno}
\address{Department of Mathematics, Bar-Ilan University, Ramat-Gan 5290002, Israel.}
\urladdr{http://u.math.biu.ac.il/\textasciitilde morenom3}
\address{Institute of Mathematics, University of Vienna, Vienna 1090, Austria.}
\urladdr{http://miguelmath.com}
\begin{document}
\begin{abstract}
We study the $\kappa$-Borel-reducibility of isomorphism relations of
complete first order theories by using coloured trees. Under some cardinality assumptions, we show the following: For all theories T and T', if T is classifiable
and T' is unsuperstable, then the isomorphism of models of T' is strictly above
the isomorphism of models of T with respect to $\kappa$-Borel-reducibility.
\end{abstract}


\maketitle

\section{Introduction}

The interaction between Generalized Descriptive Set Theory (GDST) and Classification theory has been one of the biggest motivation to study the Borel reducibility in the Generalized Baire spaces. One of the main questions is to determined if there is a counterpart of Shelah's Main Gap Theorem in the Generalized Baire Spaces (provable in ZFC). In \cite{HKM} Hyttinen, Kulikov, and Moreno showed the consistency of a counterpart of Shelah's Main Gap Theorem in the Borel reducibility hierarchy of the isomorphism relations (see preliminaries), indeed it can be forced.

\begin{fact}[Hyttinen-Kulikov-Moreno, \cite{HKM} Theorem 7]
Suppose that $\kappa=\kappa^{<\kappa}=\lambda^+$, $2^\lambda>2^\omega$ and $\lambda^{<\lambda}=\lambda$. There is a forcing notion $\mathbb{P}$ which forces the following statement:

``If $T_1$ is a classifiable theory and $T_2$ is not, then the isomorphism relation of $T_1$ is Borel reducible to the isomorphism relation of $T_2$, and there are $2^\kappa$ equivalence relations strictly between them"
\end{fact}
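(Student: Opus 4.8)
The plan is to split the statement into three tasks: (i) the reduction $\cong_{T_1}\leq_B\cong_{T_2}$, (ii) its strictness $\cong_{T_2}\not\leq_B\cong_{T_1}$, and (iii) the construction of $2^\kappa$ many equivalence relations strictly between. Only (iii) should genuinely require the forcing $\mathbb{P}$, so I would first fix the cardinal arithmetic $\kappa=\kappa^{<\kappa}=\lambda^+$, $\lambda^{<\lambda}=\lambda$ (and later insist $\mathbb{P}$ preserve it) so that the structural facts underlying (i) and (ii) hold in every extension I pass to.

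For (i) I would use the two-sided structural dichotomy for isomorphism relations on the generalized Baire space. On the classifiable side, if $T_1$ is classifiable then $\cong_{T_1}$ is Borel, and in fact Borel reducible to the identity relation on $2^\kappa$ (this uses $\lambda^{<\lambda}=\lambda$ together with the classification of superstable shallow theories without DOP or OTOP). On the unsuperstable side, I would use the coloured-trees machinery to build a Borel reduction of a fixed non-Borel equivalence relation into $\cong_{T_2}$, the natural candidate being $E^\kappa_\lambda$, equality modulo the nonstationary ideal restricted to $S^\kappa_\lambda=\{\alpha<\kappa:\cf(\alpha)=\lambda\}$; this is the standard route to placing $\cong_{T_2}$ high in the hierarchy. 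Composing the reduction of $\cong_{T_1}$ to the identity with a reduction of the identity through $E^\kappa_\lambda$ into $\cong_{T_2}$ then yields $\cong_{T_1}\leq_B\cong_{T_2}$.

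Strictness (ii) is then cheap: since Borel reducibility pulls Borel sets back to Borel sets, a reduction $\cong_{T_2}\leq_B\cong_{T_1}$ would force $\cong_{T_2}$ to be Borel. But the coloured-trees reduction of $E^\kappa_\lambda$ into $\cong_{T_2}$, together with the non-Borelness of $E^\kappa_\lambda$ (valid under $\kappa=\kappa^{<\kappa}$ when $S^\kappa_\lambda$ is stationary and costationary), shows $\cong_{T_2}$ is not Borel, so $\cong_{T_1}<_B\cong_{T_2}$.

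The heart of the statement is (iii), and here the forcing enters. I would look for the intermediate relations inside the family $\{E_S:S\subseteq S^\kappa_\lambda\ \text{stationary}\}$, where $\eta\mathrel{E_S}\xi$ iff $\{\alpha\in S:\eta(\alpha)\neq\xi(\alpha)\}$ is nonstationary. Each $E_S$ sits strictly between $\cong_{T_1}$ and $\cong_{T_2}$: it is above the identity (hence above $\cong_{T_1}$) yet, via the coloured trees, below $\cong_{T_2}$, with strictness on both ends argued as in (i)--(ii). To obtain $2^\kappa$ genuinely distinct relations I would force with a $\mathbb{P}$ adding a family of $2^\kappa$ stationary subsets of $S^\kappa_\lambda$ that are mutually ``reflection-independent,'' so that no Borel function reduces $E_S$ to $E_{S'}$ for distinct $S,S'$ in the family; this turns the family into a $\leq_B$-antichain of size $2^\kappa$ lying strictly between $\cong_{T_1}$ and $\cong_{T_2}$. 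I would take $\mathbb{P}$ to be $<\kappa$-closed and $\kappa^+$-c.c.\ (or similar), so that it preserves $\kappa=\kappa^{<\kappa}=\lambda^+$, $\lambda^{<\lambda}=\lambda$, and the stationarity/costationarity used in (i)--(ii), ensuring the whole configuration survives into the extension. The main obstacle is precisely the negative direction here: proving that the generically added stationary sets yield pairwise $\leq_B$-incomparable $E_S$. I expect the real work to be a pressing-down (Fodor) argument on a club of coordinates where a hypothetical reduction is continuous, combined with the mutual-genericity of the sets provided by $\mathbb{P}$, all while simultaneously preserving the reductions into $\cong_{T_2}$ — this balancing act is the delicate part.
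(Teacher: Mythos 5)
The paper does not prove this statement at all---it is quoted as a known result of \cite{HKM} (Theorem~7 there)---so your sketch has to be measured against the argument of \cite{HKM}, whose ZFC ingredients reappear in this paper as Facts \ref{HKM1} and \ref{notred}. Your step (i) contains a genuine error that propagates into (ii) and (iii): classifiability of $T_1$ does \emph{not} imply that $\cong_{T_1}$ is Borel, let alone Borel reducible to $\id_{2^\kappa}$. By Friedman--Hyttinen--Kulikov \cite{FHK13} (whose hypotheses hold here, since $\kappa^{<\kappa}=\kappa=\lambda^+$ forces $2^\lambda=\kappa>2^\omega$), $\cong_T$ is Borel if and only if $T$ is classifiable \emph{and shallow}; for a classifiable \emph{deep} theory, $\cong_T$ is $\Delta^1_1$ but not Borel. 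Since $\id_{2^\kappa}$ is a closed relation and Borel reducibility pulls Borel sets back to Borel sets, for deep classifiable $T_1$ there is no reduction of $\cong_{T_1}$ to the identity, so the first link of your chain $\cong_{T_1}\ \redub\ \id\ \redub\ E^\kappa_\lambda\ \redub\ \cong_{T_2}$ breaks, and your (ii), which quotes Borelness of $\cong_{T_1}$, breaks with it. The correct route is through diamonds, not through $\id$: the hypotheses give $2^\lambda=\kappa$ and $\lambda>\omega$ (from $2^\lambda>2^\omega$), so by \cite{Sh1} $\diamondsuit_S$ holds for every stationary $S\subseteq\kappa$ concentrating on cofinalities $\neq\lambda$, whence $\cong_{T_1}\ \redub\ =^2_S$ for such $S$ by (the relativized form of) Fact \ref{HKM1}, and strictness comes from Fact \ref{notred}. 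A second error is on the $T_2$ side: ``not classifiable'' is not ``unsuperstable''. Superstable theories with DOP or OTOP are non-classifiable but not unsuperstable, and the coloured-tree machinery (trees of height $\omega+1$, tied to a sequence $\{\phi_n\}_{n<\omega}$ witnessing unsuperstability) says nothing about them; for those cases \cite{FHK13} provides $=^2_\lambda\ \redub\ \cong_{T_2}$ by a different construction. Conversely, for stable unsuperstable $T_2$ what the coloured trees yield is a reduction of $=^2_\omega$ --- the invariant $S(A)$ lives on cofinality-$\omega$ ordinals because the trees have $\omega+1$ levels --- and no reduction of your hub $E^\kappa_\lambda\ (=\ =^2_\lambda)$ into such $\cong_{T_2}$ is available. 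So a single hub cannot serve all cases; \cite{HKM} must run the argument with $=^2_S$ for $S\subseteq S^\kappa_\omega$ or $S\subseteq S^\kappa_\lambda$ according to the type of $T_2$.

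On (iii), the antichain idea has the right shape (it is indeed how \cite{HKM} proceed), but your division of labour between ZFC and the forcing is misplaced, and the key mechanism is missing. A family of $2^\kappa$ stationary subsets of the relevant cofinality set with pairwise stationary differences exists outright in ZFC: split the set into $\kappa$ disjoint stationary pieces by Solovay's theorem and take unions along an independent family of subsets of $\kappa$; nothing needs to be added generically. What must be \emph{forced} is (a) the non-reducibility $=^2_S\ \not\redub\ =^2_{S'}$ whenever $S\setminus S'$ is stationary --- this is the heart of \cite{HKM}'s Theorem~7, and it is also what gives strictness below $\cong_{T_2}$, since $\cong_{T_2}\ \redub\ =^2_S$ would contradict $=^2_{S'}\ \redub\ \cong_{T_2}$ for $S'$ with $S'\setminus S$ stationary --- and (b), when the family lives on $S^\kappa_\lambda$, the diamonds $\diamondsuit_S$ needed for $\cong_{T_1}\ \redub\ =^2_S$, which \cite{Sh1} does not supply at the critical cofinality $\cf(\lambda)=\lambda$. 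For (a), ``mutual genericity'' of generically added stationary sets is not an argument: one must defeat every potential $\kappa$-Borel reduction, i.e.\ handle $2^\kappa$ many names, inside a ${<}\kappa$-closed, $\kappa^+$-cc iteration that simultaneously preserves stationarity, the diamonds, and $\kappa=\kappa^{<\kappa}$; carrying this out is precisely the content of the forcing constructed in \cite{FHK13} and used in \cite{HKM}, and it is the part your sketch leaves entirely open.
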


In the same article the authors proved the following in ZFC.

\begin{fact}[Hyttinen-Kulikov-Moreno, \cite{HKM} Corollary 2]\label{miangapstrict}
Suppose that $\kappa=\kappa^{<\kappa}=\lambda^+$ and $\lambda^{\omega}=\lambda$. If $T_1$ is classifiable and $T_2$ is stable unsuperstable, then the isomorphism relation of $T_1$ is Borel reducible to the isomorphism relation of $T_2$. 
\end{fact}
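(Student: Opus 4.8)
The plan is to factor the desired reduction through an intermediate equivalence relation on a space of coloured trees, following the two-step strategy characteristic of this area. Writing $\cong_{T_i}$ for the isomorphism relation on the (coded) models of $T_i$ of size $\kappa$, I will produce two $\kappa$-Borel reductions and compose them: first a map $\eta \mapsto J_\eta$ reducing $\cong_{T_1}$ to the isomorphism relation $\cong_{ct}$ of coloured trees, and then a map $J \mapsto \mathcal{M}(J)$ reducing $\cong_{ct}$ to $\cong_{T_2}$. The hypotheses $\kappa = \kappa^{<\kappa} = \lambda^+$ guarantee that the generalized Baire space is well behaved and that the relevant trees and models all have size $\kappa$, while $\lambda^\omega = \lambda$ is what pins the construction to cofinality $\omega$, which is exactly the cofinality at which a stable unsuperstable theory records its invariants.

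\emph{Step A (the classifiable side).} Because $T_1$ is classifiable, its models of size $\kappa$ are determined up to isomorphism by a system of dimensions arranged along a tree of finite height (shallowness), and the standard analysis of $\cong_{T_1}$ in GDST shows that this invariant can be read off in a $\kappa$-Borel way and coded as a coloured tree $J_\eta$ concentrating on $S^\kappa_\omega = \{\alpha < \kappa : \cf(\alpha) = \omega\}$. I would invoke the known reduction of $\cong_{T_1}$ to equality modulo the nonstationary ideal on $S^\kappa_\omega$ (equivalently, to $\cong_{ct}$), so that $A_\eta \cong A_\xi$ if and only if $J_\eta \cong_{ct} J_\xi$. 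Once the classification-theoretic input is in place this step is essentially bookkeeping.

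\emph{Step B (the unsuperstable side).} This is where stable unsuperstability enters and where the real work lies. Using Shelah's characterization of non-superstability, fix a formula and a tree-indexed family of indiscernibles $(a_s : s \in \kappa^{\le\omega})$ along which types fork at each level, witnessing an $\omega$-chain of forking. Given a coloured tree $J$, I would build an index structure $I(J)$ out of $J$ and form the Ehrenfeucht--Mostowski model $\mathcal{M}(J)$ of $T_2$ over $I(J)$. The construction is functorial, so an isomorphism of coloured trees induces an isomorphism of the corresponding models; this gives the easy direction $J \cong_{ct} J' \Rightarrow \mathcal{M}(J) \cong \mathcal{M}(J')$. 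The map $J \mapsto \mathcal{M}(J)$ is $\kappa$-Borel because each element and each basic relation of $\mathcal{M}(J)$ is determined by the restriction of $J$ to a set of size $<\kappa$.

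The main obstacle is the converse of Step B: showing that non-isomorphic coloured trees yield non-isomorphic models, i.e. $\mathcal{M}(J) \cong \mathcal{M}(J') \Rightarrow J \cong_{ct} J'$. Here one must reconstruct the tree $J$ (up to the relevant nonstationary equivalence) from the abstract isomorphism type of $\mathcal{M}(J)$. The argument uses stability of $T_2$ to control the possible types realized in the EM model, and unsuperstability to ensure that the forking $\omega$-chains are genuinely detectable: one shows that any isomorphism $\mathcal{M}(J) \to \mathcal{M}(J')$ must respect the skeleton of indiscernibles up to a club, whence it induces a colour- and order-preserving correspondence between $J$ and $J'$ off a nonstationary set. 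The hypothesis $\lambda^\omega = \lambda$ is used precisely to carry out the combinatorics on $S^\kappa_\omega$ (counting branches and stabilizing isomorphisms on a club), and $\kappa^{<\kappa} = \kappa$ ensures the reduction is $\kappa$-Borel. Composing the reductions of Steps A and B then yields $\cong_{T_1} \redub \cong_{T_2}$, as required.
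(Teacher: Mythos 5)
Your two-step factorization --- $\cong_{T_1}$ into $=^2_\omega$ (equivalently, coloured-tree isomorphism, by Fact \ref{colorisom}), then $=^2_\omega$ into $\cong_{T_2}$ --- is the right skeleton: it is how \cite{HKM} proves this fact, and how the present paper proves the stronger Theorem~A (Theorem \ref{maincor} combined with Fact \ref{HKM1}). But Step~A is not ``essentially bookkeeping,'' and it is not a ZFC fact about arbitrary $\kappa=\kappa^{<\kappa}$: the reduction of $\cong_{T_1}$ to $=^2_\omega$ for classifiable $T_1$ (Fact \ref{HKM1}) needs $\diamondsuit_\omega$ on $\{\alpha<\kappa\mid\cf(\alpha)=\omega\}$, so you must extract that from the hypotheses. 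It does follow --- $\kappa=\kappa^{<\kappa}=\lambda^+$ forces $2^\lambda=\kappa$, and $\lambda^\omega=\lambda$ forces $\cf(\lambda)>\omega$, so Shelah's theorem on diamonds \cite{Sh1} applies --- but this verification is the actual content of the deduction and is absent from your write-up.

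The genuine gap is in Step~B, at the sentence ``The construction is functorial, so an isomorphism of coloured trees induces an isomorphism of the corresponding models.'' An EM model for an unsuperstable theory is built over an \emph{ordered} tree in $K^\omega_{tr}$ (Definition \ref{Shtree}): each set of successors carries a linear order, and that order is part of the atomic type that $\Phi$ reads off, hence part of what an isomorphism of index structures must preserve. A coloured-tree isomorphism has no reason to respect whatever order you place on successors, so for a naive choice of $I(J)$ (say, successors ordered by ordinals) the implication $J\cong J'\Rightarrow I(J)\cong I(J')$ is simply false; this is precisely the obstruction described in the paper's introduction (Shelah's construction ``does not define a Borel reduction. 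The problem comes when the ordered trees are constructed''). Repairing it is the bulk of the paper: one needs a $(<\kappa,bs)$-stable, $(\kappa,bs,bs)$-nice, $\kappa$-colorable linear order (Theorem \ref{A_desire_order}), the trees $A^f$ built from $J_f$ using that order and property $(\ast)$, and the back-and-forth argument of Theorem \ref{Afcong}. Your converse direction has the matching gap: ``any isomorphism respects the skeleton up to a club'' is Shelah's theorem (Fact \ref{SHmain}), which applies only when the index trees are locally $(\kappa,bs,bs)$-nice and $(<\kappa,bs)$-stable; these properties do not come for free for an unspecified $I(J)$, and verifying them is where $\lambda^\omega=\lambda$ is really used (Lemmas \ref{stable_I0_first_lemma} and \ref{I_stable}), not in ``combinatorics on $S^\kappa_\omega$.'' Finally, stability of $T_2$ is misplaced in your argument: on the EM route it plays no role (Fact \ref{SHmain} holds for all unsuperstable theories, which is exactly why this paper can cover the unstable case); stability matters if one instead takes primary models over the coloured-tree-indexed skeleton, where uniqueness of primary models makes functoriality automatic and no ordering of successors is needed --- that alternative is what the stability hypothesis in the cited fact reflects. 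As written, both directions of your Step~B rest on unsupported claims.
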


In this article we will extend Fact \ref{miangapstrict} to unsuperstable theories, i.e. the unstable case.

\begin{thmaa}\label{Unsupred}
Suppose that $\kappa=\kappa^{<\kappa}=\lambda^+$ is such that $\lambda^\omega=\lambda$. If $T_1$ is classifiable and $T_2$ is unsuperstable, then the isomorphism relation of $T_1$ is Borel reducible to the isomorphism relation of $T_2$. 
\end{thmaa}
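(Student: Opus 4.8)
The plan is to factor the desired reduction through the space of coloured trees, exactly as in the proof of Fact~\ref{miangapstrict}. Recall that for a \emph{classifiable} theory $T_1$ one has a Borel reduction $\cong_{T_1}\redub E$, where $E$ is the equivalence relation on coloured trees used in the stable unsuperstable argument; this first reduction depends only on the classification-theoretic structure of $T_1$ and uses nothing about $T_2$, so I would take it over verbatim. Since $\redub$ is transitive, it then suffices to produce a Borel reduction $E\redub\cong_{T_2}$ \emph{under the weaker hypothesis that $T_2$ is merely unsuperstable}. In other words, the whole problem is to upgrade the second half of the proof of Fact~\ref{miangapstrict} from the stable unsuperstable case to the general unsuperstable, possibly unstable, case.

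To build this reduction I would send each coloured tree $J$ to a model $\mathcal B_J\models T_2$ with universe $\kappa$, continuously in a code for $J$. The essential input is the uniform, purely syntactic characterization of non-superstability: since $T_2$ is not superstable there is a sequence of formulas $\langle\varphi_n(\bar x,\bar y_n):n<\omega\rangle$ and, in a large model, parameters $\langle\bar a_\eta:\eta\in{}^{<\omega}\lambda\rangle$ such that along every branch $b\in{}^{\omega}\lambda$ the set $\{\varphi_n(\bar x,\bar a_{b\restriction n}):n<\omega\}$ is consistent, while instances coming from incompatible immediate successors of a common node are contradictory. This tree of formulas needs no stability to exist. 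I would use it to encode the underlying tree of $J$ by keeping the parameters $\bar a_s$ for the nodes $s$ of $J$ and realizing the branch type of $b$ exactly when $b$ is a branch of $J$, recording the colour of $b$ as the dimension, i.e.\ the number of independent realizations, of its branch type. The hypothesis $\lambda^\omega=\lambda$ is what guarantees that there are only $\lambda$ potential branches of length $\omega$, so that the whole construction---parameters, realizations, and the ambient model of $T_2$ generated over them---fits into a single model of size $\kappa=\lambda^+$, and that the assignment $J\mapsto\mathcal B_J$ is continuous.

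It then remains to verify that $J\mapsto\mathcal B_J$ is a reduction. The easy direction is that an equivalence of coloured trees can be transported to an isomorphism $\mathcal B_J\cong\mathcal B_{J'}$ by matching branches of the same colour, and hence of the same dimension. The hard part will be faithfulness: showing that any isomorphism $\mathcal B_J\cong\mathcal B_{J'}$ forces $J$ and $J'$ to be equivalent coloured trees. In the stable unsuperstable setting one tames an arbitrary isomorphism with the full forking calculus---stationary types, canonical bases, and prime models over the coding parameters let one show that an isomorphism must carry branch types to branch types and preserve their dimensions. None of these tools survives when $T_2$ is only unsuperstable and possibly unstable, and this is exactly where the main obstacle lies.

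To get around it I would try to make the coding rigid enough that faithfulness becomes a combinatorial rather than a stability-theoretic statement. Concretely, I would choose the witnessing formulas $\langle\varphi_n\rangle$ so that the branch types are \emph{recoverable} from $\mathcal B_J$ up to automorphism---the incompatibility pattern of the $\varphi_n$-instances should let one read off the tree order of $J$ inside $\mathcal B_J$---and then argue that the dimension of each branch type is an isomorphism invariant. The delicate point is that a model of an unstable theory carries a great deal of extra structure, for instance from the order or the independence property, that could in principle manufacture ``spurious'' branches not coming from $J$. The crux of the proof will be a cardinality argument, using $\kappa=\lambda^+$ together with $\lambda^\omega=\lambda$, showing that in a model of this size no such spurious configurations of the designated dimensions can arise, so that every isomorphism descends to a colour- and order-preserving bijection between the branches of $J$ and of $J'$.
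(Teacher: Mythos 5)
Your overall decomposition is the same as the paper's: factor through $=^2_\omega$, take the reduction $\cong_{T_1}\ \reduc\ =^2_\omega$ for classifiable $T_1$ as a black box (Fact \ref{HKM1}, which applies here because $\kappa=\kappa^{<\kappa}=\lambda^+$ with $\lambda^\omega=\lambda$ yields $2^\lambda=\kappa$ and hence $\diamondsuit_\omega$), and concentrate all new work in a reduction $=^2_\omega\ \redub\ \cong_{T_2}$ for unsuperstable $T_2$. But your construction of that second reduction has a genuine gap, and it is exactly at the point you yourself flag as the ``crux.'' Your plan is to code the colour of a branch as the \emph{dimension} (number of independent realizations) of its branch type, and then to prove that this dimension is an isomorphism invariant by ``a cardinality argument'' ruling out spurious configurations. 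No such argument is given, and the invariant itself is ill-defined in the setting you need it: ``number of independent realizations'' presupposes a workable independence notion, which is precisely what disappears when $T_2$ is unstable. So the proposal eliminates forking calculus in one sentence and silently reintroduces it in the next; the hard model-theoretic content of the theorem is never proved.

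The paper's route avoids having to prove any new model theory at all. The isomorphism-invariant it uses is not a dimension but Shelah's invariant $S(\mathbb A)$ -- the set of limit ordinals at which a branch with a top node first appears in a $\kappa$-representation -- and the statement that an isomorphism of the models forces $S(A_1)=S(A_2)$ is Shelah's own theorem (Fact \ref{SHmain}), which already holds for \emph{all} unsuperstable theories, provided the skeletons are ordered trees in $K^\omega_{tr}$ that are locally $(\kappa,bs,bs)$-nice and $(<\kappa,bs)$-stable, and the models are the Ehrenfeucht--Mostowski models $EM(A,\Phi)$ over them. The genuinely new work in the paper is therefore purely combinatorial: constructing a $(<\kappa,bs)$-stable, $(\kappa,bs,bs)$-nice, $\kappa$-colorable linear order $I$ (Theorem \ref{A_desire_order}), using it to convert the Hyttinen--Kulikov coloured trees $J_f$ into ordered coloured trees $A^f$ with $f=^2_\omega g \iff A^f\cong A^g$ (Theorem \ref{Afcong}), encoding the colour not as a dimension but by simply deleting the leaves of colour $0$ (Definition \ref{the_models}), so that $S(A_f)$ equals $\{\delta\mid \cf(\delta)=\omega,\ f(\delta)=1\}$ modulo clubs (Lemma \ref{spectra}), and finally arranging the bookkeeping so that $f\mapsto EM(A_f,\Phi)$ is continuous (Theorem \ref{maincor}). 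If you want to salvage your proposal, the fix is to replace your dimension-coding and the hoped-for cardinality argument by this scheme: the colour must be recorded in a tree-combinatorial invariant that Shelah's theorem is already known to preserve, rather than in a stability-theoretic one you would have to re-prove invariant without stability.
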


To prove Theorem A we will use the coloured trees tools developed in \cite{HK} by Hyttinen and Kulikov and the tools used by Shelah in \cite{Sh}, to construct models of unsuperstable theories. In \cite{HK} Hyttinen and Kulikov used the coloured trees to construct models of an already fix stable unsuperstable theory in the context of the Generalized Baire spaces. In \cite{Sh} Shelah used ordered trees with $\omega+1$ levels to construct non-isomorphic models of unsuperstable theories. 

The objective of Hyttinen and Kulikov was to use elements of $\kappa^\kappa$ to construct models of a very particular theory. The difficulties with this construction comes when we want to applied it to unstable theories. On the other hand the objective of Shelah was to use stationary sets to construct as many models as possible for unsuperstable theories. Even though for each unsuperstable theory and $f\in 2^\kappa$, Shelah's constructs a model $\mathcal{M}_f$, this construction does not define a Borel reduction. The problem comes when the ordered trees are constructed.  
We will use coloured trees to construct ordered trees, by doing this we ensure that the construction of the models will define a continuous reduction. To construct the ordered trees from coloured trees we will use similar ideas to ones used by  Abraham in \cite{Av} to construct a rigid Aronszajn tree.

In \cite{FMR} Fernandes, Moreno, and Rinot showed that the isomorphism relation of unsuperstable theories can be forced to be analytically complete for $\kappa$ a successor cardinal. We will extend this result to inaccessible cardinals.

\begin{thmb}\label{Unsupcomp}
Suppose that $\kappa=\kappa^{<\kappa}$ is an inaccessible cardinal. There exists a $<\kappa$-closed $\kappa^+$-cc forcing extension in which: If $T$ is unsuperstable, then the isomorphism relation of $T$ is analytically complete. 
\end{thmb}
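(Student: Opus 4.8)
The plan is to prove $\Sigma^1_1$-completeness by exhibiting, in the extension, a Borel (indeed continuous) reduction of a fixed complete analytic equivalence relation $E_{\max}$ into the isomorphism relation $\cong_T$, uniformly for every unsuperstable $T$. Since an isomorphism of two codes is witnessed by a single $f\in\kappa^\kappa$, the relation $\cong_T$ is itself $\Sigma^1_1$, so ``analytically complete'' means precisely that it is maximal among analytic equivalence relations under $\redub$; it therefore suffices to reduce one universal relation. Under $\kappa=\kappa^{<\kappa}$ such a $\Sigma^1_1$-complete $E_{\max}$ exists, and a convenient representative compatible with the machinery of Theorem~A is the $\kappa$-isomorphism relation on a suitable class of coloured trees, which is $\Sigma^1_1$-complete via the standard coding of analytic sets by trees.

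\emph{The forcing.} First note that unsuperstability of a fixed countable theory is absolute between models of \zfc{} with the same $T$, so a single forcing $\mathbb{P}$ must serve all unsuperstable $T$ at once. I would take $\mathbb{P}$ to be the $<\kappa$-closed, $\kappa^+$-cc poset that adjoins the combinatorial skeleton required by Shelah's $(\omega+1)$-level construction: a stationary set $S\subseteq\{\alpha<\kappa:\cf(\alpha)=\omega\}$ together with a club-guessing/diamond-type sequence on $S$ playing, at an inaccessible $\kappa$, the role that the successor-cardinal hypotheses ($\kappa=\lambda^+$, $\lambda^\omega=\lambda$) played for Fernandes--Moreno--Rinot in \cite{FMR}. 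Being $<\kappa$-closed and $\kappa^+$-cc, $\mathbb{P}$ adds no bounded subsets of $\kappa$, preserves $\kappa=\kappa^{<\kappa}$ and all cardinals, so the generalized Baire space and the meaning of $\Sigma^1_1$ are unchanged and every $T$ remains unsuperstable in $V[G]$.

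\emph{The reduction.} Working in $V[G]$, for a coloured tree $x$ I would first convert $x$ into an ordered tree $J_x$ with $\omega+1$ levels, exactly as in the proof of Theorem~A following Abraham's construction in \cite{Av}, and then feed $J_x$ into Shelah's construction \cite{Sh} to obtain a model $\mathcal{M}_x\models T$; the assignment $x\mapsto\mathcal{M}_x$ is arranged to be continuous. For the forward direction, an isomorphism of coloured trees induces $J_x\cong J_y$ and hence $\mathcal{M}_x\cong\mathcal{M}_y$, by functoriality of the two constructions. For the backward direction I would argue contrapositively: from an isomorphism $\mathcal{M}_x\cong\mathcal{M}_y$ one reads off, through an Ehrenfeucht--Fra{\"\i}ss{\'e}/invariant analysis at cofinality $\omega$ that exploits the unsuperstability of $T$, a club $C\subseteq\kappa$ on which $J_x$ and $J_y$ agree; the guessing sequence carried by $S$ then forces $x$ and $y$ to be isomorphic coloured trees, so that $E_{\max}\redub\cong_T$ and $\cong_T$ is complete.

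The main obstacle is precisely this backward direction at an inaccessible cardinal. At successors one could use $\lambda^\omega=\lambda$ and the rich partition and guessing structure available on the cofinality-$\omega$ points of $\lambda^+$ to pin down the $\omega$-invariants of the two models; at an inaccessible there is no predecessor, so the requisite club-guessing must be supplied by $\mathbb{P}$ and the invariant computation re-derived in that setting, while simultaneously keeping $x\mapsto\mathcal{M}_x$ continuous. This tension between continuity of the coding and the stationarity needed to separate non-isomorphic models is exactly what makes Shelah's original construction fail to be Borel, and resolving it is the heart of the argument; verifying that the added guessing sequence genuinely distinguishes the models for \emph{all} unsuperstable $T$ at once, rather than for a single fixed theory, is the delicate final point.
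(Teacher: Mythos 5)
Your high-level plan (force a diamond-type principle with a $<\kappa$-closed $\kappa^+$-cc poset, then reduce one complete analytic equivalence relation to $\cong_T$) is the right shape, but the two components you supply for it do not work. First, your source relation: you assert that under $\kappa=\kappa^{<\kappa}$ the isomorphism relation on coloured trees is $\Sigma^1_1$-complete ``via the standard coding of analytic sets by trees.'' This conflates $\Sigma^1_1$-complete \emph{sets} with $\Sigma^1_1$-complete \emph{equivalence relations}: the tree coding produces a complete analytic set, not an equivalence relation to which every analytic equivalence relation is $\kappa$-Borel reducible. No such completeness is provable from $\kappa=\kappa^{<\kappa}$ alone by any known argument --- producing a model where some isomorphism-like relation is complete is exactly what the forcing is for, and if your claim were a \zfc{} fact, Theorem~B would follow from the machinery of Theorem~A with no forcing at all. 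Second, your backward direction (an isomorphism $\mathcal M_x\cong\mathcal M_y$ yields an isomorphism of the coloured trees $x$ and $y$) is not merely ``delicate''; it is unattainable with the available tools. Everything the unsuperstability machinery extracts from a model is a $CUB$-invariant (Fact \ref{fact2.8}, Fact \ref{SHmain}): it can only ever recover the skeleton up to agreement modulo the club filter. Indeed, for the trees of this paper, $J_f\cong J_g$ holds exactly when $f\ =^2_\omega\ g$ (Fact \ref{colorisom}), so isomorphism of the relevant trees \emph{is} a club-modulo relation. No club-guessing sequence can upgrade ``the invariants agree on a club'' to a literal isomorphism of arbitrary tree codes, which is why your attempt to make the forced guessing sequence do model-theoretic work inside the reduction stalls at precisely the point you flag as ``the heart of the argument.''

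The paper's proof avoids both problems by decoupling the model theory from the set theory, and the missing idea is simply to take your $E_{\max}$ to be $=^2_\omega$ itself. In \zfc{} (under the standing assumptions, which hold at an inaccessible and are preserved by any $<\kappa$-closed $\kappa^+$-cc forcing) one proves $=^2_\omega\ \reduc\ \cong_T$ for every countable complete unsuperstable $T$ (Theorem \ref{maincor}); with this choice of source relation, your problematic backward direction becomes exactly Lemma \ref{main}, where club-invariance of the invariants is a feature rather than an obstruction, and the construction is uniform in $T$ for free. The forcing step then never mentions $T$ or the models: by Fact \ref{forcing-comp-rels} there is a $<\kappa$-closed $\kappa^+$-cc extension in which $\dl^*_S(\Pi^1_2)$ holds for $S=\{\alpha<\kappa\mid \cf(\alpha)=\omega\}$, and by Fact \ref{comp-rels} this principle makes $=^2_\omega$ $\Sigma^1_1$-complete. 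Composing the forced completeness of $=^2_\omega$ with the \zfc{} reduction $=^2_\omega\ \reduc\ \cong_T$ gives Theorem~B; the guessing principle carries the completeness of $=^2_\omega$, not the task of distinguishing models.
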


\subsection{Preliminaries}
During this paper we will work under the general assumption that $\kappa$ is a regular uncountable cardinal that satisfies $\kappa=\kappa^{<\kappa}$ and for all $\gamma<\kappa$, $\gamma^\omega<\kappa$. We will work only with first-order countable complete theories on a countable language, unless something else is stated.

Let us recall some definitions and results on GDST, for more on GDST see \cite{FHK13}. We will only review the definitions and results that are relevant for the article.


The generalized Baire space is the set $\kappa^\kappa$ endowed with the bounded topology, in this topology the basic open sets are of the form $$[\zeta]=\{\eta\in \kappa^\kappa\mid \zeta\subseteq \eta\}$$
where $\zeta\in \kappa^{<\kappa}$. The collection of $\kappa$-Borel subsets of $\kappa^\kappa$ is the smallest set that contains the basic open sets and is closed under union and intersection both of length $\kappa$. 
A $\kappa$-Borel set is any set of this collection. 

A function $f\colon \kappa^\kappa\rightarrow \kappa^\kappa$ is $\kappa$-Borel,
if for every open set $A\subseteq \kappa^\kappa$ the inverse image
$f^{-1}[A]$ is a $\kappa$-Borel subset of $X$. Let $E_1$ and $E_2$ be
equivalence relations on $\kappa^\kappa$. We say that $E_1$ is
\emph{$\kappa$-Borel reducible to $E_2$} if there is a $\kappa$-Borel function $f\colon
\kappa^\kappa\rightarrow \kappa^\kappa$ that satisfies $$(\eta,\xi)\in E_1\iff
(f(\eta),f(\xi))\in E_2.$$  We call $f$ a reduction of $E_1$ to
$E_2$ and we denote this by $E_1\redub E_2$. We will use this notation instead of ($\leq_B$), because we will deal with the equivalence relations $=_S^\beta$ (Definition \ref{clubrel}) and the notation could become heavy for the reader.
 In the case $f$ is continuous,
 we say that $E_1$ is continuously reducible to $E_2$ and
we denote it by $E_1\reduc E_2$.

A subset $X\s \kappa^\kappa$ is a $\Sigma_1^1(\kappa)$ set of $\kappa^\kappa$ if there is a closed set $Y\s \kappa^\kappa\times \kappa^\kappa$ such that the projection $\pr(Y):=\{x\in \kappa^\kappa\mid \exists y\in\kappa^\kappa,~(x,y)\in Y\}$ is equal to $X$. 
These definitions also extended to the product space $\kappa^\kappa\times\kappa^\kappa$.
An equivalence relation $E$ is $\Sigma_1^1$-complete if $E$ is a $\Sigma_1^1(\kappa)$ set and every $\Sigma_1^1(\kappa)$ equivalence relation $R$ is Borel reducible to $E$.

The generalized Cantor space is the subspace $2^\kappa$.
Since in this article we will only work with $\kappa$-Borel and $\Sigma_1^1(\kappa)$ sets, we will omit $\kappa$, and refer to them as Borel and $\Sigma_1^1$.

\begin{defn}\label{clubrel}
Given $S\subseteq \kappa$ and $\beta\leq\kappa$, we define the equivalence relation $=_S^\beta\ \subseteq\ \beta^\kappa\times \beta^\kappa$, 
as follows $$\eta\mathrel{=^\beta_S}\xi \iff \{\alpha<\kappa\mid \eta(\alpha)\neq\xi(\alpha)\}\cap S \text{ is non-stationary}.$$
\end{defn}
We will denote by $=_\mu^\beta$ the relation $=_S^\beta$ when $S=\{\alpha<\kappa\mid cf(\alpha)=\mu\}$. Let us denote by $CUB$ the club filter on $\kappa$ and $=^\beta_{CUB}$ the relation $=_S^\beta$ when $S=\kappa$.

\begin{defn}\label{struct}
Let $\mathcal{L}=\{Q_m\mid m\in\omega\}$ be a countable relational language.
Fix $\pi$ a bijection between $\kappa^{<\omega}$ and $\kappa$. For every $\eta\in \kappa^\kappa$ define the structure $\mathcal{A}_\eta$ with domain $\kappa$ as follows.
For every tuple $(a_1,a_2,\ldots , a_n)$ in $\kappa^n$ $$(a_1,a_2,\ldots , a_n)\in Q_m^{\mathcal{A}_\eta}\Leftrightarrow Q_m \text{ has arity } n \text{ and }\eta(\pi(m,a_1,a_2,\ldots,a_n))>0.$$
\end{defn}

\begin{defn}
Assume $T$ a first-order theory in a relational countable language, we define the isomorphism relation, $\cong_T~\subseteq \kappa^\kappa\times \kappa^\kappa$, as the relation $$\{(\eta,\xi)|(\mathcal{A}_\eta\models T, \mathcal{A}_\xi\models T, \mathcal{A}_\eta\cong \mathcal{A}_\xi)\text{ or } (\mathcal{A}_\eta\not\models T, \mathcal{A}_\xi\not\models T)\}$$
\end{defn}



\section{Ordered trees}\label{section_colorable_linear_order}

\subsection{Background}

In \cite{Sh}, Shelah used ordered tree to construct non-isomorphic models. That construction was focus on obtaining non-isomorphic models, reason why we have to modify the trees to adapt the construction to the generalized Cantor space and such that for all $f,g\in2^\kappa$, $f$ and $g$ are $=_\omega^\beta$-equivalent if and only if the constructed models are isomorphic. Let us start by reviewing the trees used by Shelah.

Let $\gamma$ be a countable ordinal, we will denote by $K_{tr}^\gamma$ the class of ordered trees with $\gamma+1$ levels.

\begin{defn}\label{Shtree}
Let $K_{tr}^\gamma$ be the class of models $(A,\prec, (P_n)_{n\leq \gamma},<, h)$, where:
\begin{enumerate}
\item there is a linear order $(I,<_I)$ such that $A\subseteq I^{\leq\gamma}$;
\item $A$ is closed under initial segment;
\item $\prec$ is the initial segment relation;
\item $h(\eta,\xi)$ is the maximal common initial segment of $\eta$ and $\xi$;
\item let $lg(\eta)$ be the length of $\eta$ (i.e. the domain of $\eta$) and $P_n=\{\eta\in A\mid lg(\eta)=n\}$ for $n\leq \gamma$;
\item for every $\eta\in A$ with $lg(\eta)<\gamma$, define $Suc_A(\eta)$ as $\{\xi\in A\mid \eta\prec \xi \wedge lg(\xi)=lg(\eta)+1\}$. If $\xi<\zeta$, then there is $\eta\in A$ such that $\xi,\zeta\in Suc_A(\eta)$; 
\item for every $\eta\in A\backslash P_\gamma$, $<\restriction Suc_A(\eta)$ is the induced linear order from $I$, i.e. $$\eta^\frown \langle x\rangle<\eta^\frown \langle y\rangle \Leftrightarrow x<_I y;$$
\item If $\eta$ and $\xi$ have no immediate predecessor  and $\{\zeta\in A\mid \zeta\prec\eta\}=\{\zeta\in A\mid \zeta\prec\xi\}$, then $\eta=\xi$.
\end{enumerate}
\end{defn}

To construct the models of unsuperstable theories, Shelah study the types of the ordered trees. To do this study, the notions of $\kappa$-representation and  $CUB$-invariant are crucial.

\begin{defn}[$\kappa$-representation]
Let $A$ be an arbitrary set of size at most $\kappa$. The sequence $\mathbb{A}=\langle A_\alpha\mid \alpha<\kappa  \rangle$ is a $\kappa$-representation of $A$, if $\langle A_\alpha\mid \alpha<\kappa  \rangle$ is an increasing continuous sequence of subsets of $A$, for all $\alpha<\kappa$, $|A_\alpha|<\kappa$, and $\bigcup_{\alpha<\kappa}A_\alpha=A$.
\end{defn}

\begin{defn}[$CUB$-invariant]
A function $\mathcal{H}$ is $CUB$-invariant if the following holds:
\begin{itemize}
\item The domain of $\mathcal{H}$ is the class  of $\kappa$-representations of the models of some model class $K$, where $K$ contains only models of size at most $\kappa$.
\item If $\mathbb{I}_1$ and $\mathbb{I}_2$ are $\kappa$-representations of $\mathcal{I}_1, \mathcal{I}_2\in K$, respectively, and $\mathcal{I}_1\cong \mathcal{I}_2$, then $\mathcal{H}(\mathbb{I}_1)=^2_{CUB}\mathcal{H}(\mathbb{I}_2)$.
\end{itemize}
\end{defn} 

Let us define for every $\mathcal{H}$ $CUB$-invariant and $A\in K_{tr}^\omega$, $\mathcal{H}(A)$ as the $=^2_{CUB}$-equivalence class of any $\mathbb{A}$, $\kappa$-representation, i.e. $[\mathcal{H}(\mathbb{A})]_{=^2_{CUB}}$.

We will use some properties of formulas and types.
For any $\mathcal{L}$-structure $M$ we denote by \textit{at} the set of atomic formulas of $\mathcal{L}$ and by \textit{bs} the set of basic formulas of $\mathcal{L}$ (atomic formulas and negation of atomic formulas). For all $\mathcal{L}$-structure $M$, $a\in M$, and $B\subseteq M$ we define $$tp_{bs}(a,B,M)=\{\varphi(x,b)\mid M\models \varphi(a,b),\varphi\in bs,b\in B\}.$$
In the same way $tp_{at}(a,B,M)$ is defined.

\begin{defn}
Let $\mathcal{A}$ be a model, $a\in \mathcal{A}$, $B,D\subseteq \mathcal{A}$.
We say that $tp_{bs}(a,B,\mathcal{A})$ (\textit{bs,bs})-splits over $D\subseteq \mathcal{A}$ if there are $b_1,b_2\in B$ such that $tp_{bs}(b_1,D,\mathcal{A})=tp_{bs}(b_2,D,\mathcal{A})$ but $tp_{bs}(a^\frown b_1,D,\mathcal{A})\neq tp_{bs}(a^\frown b_1,D,\mathcal{A})$.
\end{defn}

\begin{defn}
Let $|A|\leq \kappa$, for a $\kappa$-representation $\mathbb{A}$ of $A$. Define $Sp_{bs}(\mathbb{A})$ as $$Sp_{bs}(\mathbb{A})=\{\delta<\kappa\mid \delta\text{ a limit ordinal, }\exists a\in A\ [\forall\beta<\delta \ (tp_{bs}(a,A_\delta,A)\text{ (bs,bs)-splits over }A_\beta)]\}.$$
\end{defn}

\begin{remark}
The function $Sp_{bs}$ is $CUB$-invariant, this was stated in \cite{Sh} and proved in \cite{HT}. This is generally true under the assumption that for all $\gamma<\kappa$, $\gamma^\omega<\kappa$, which is one of our cardinal assumptions on $\kappa$ above.
\end{remark}

\begin{defn}
\begin{itemize}
\item Let $\mathcal{A}$ be a model of size at most $\kappa$. We say that $A$ is $(\kappa, bs,bs)$-nice if $Sp_{bs}(\mathbb{A})\ =^2_{CUB}\ \emptyset$.
\item $A\in K^\omega_{tr}$ of size at most $\kappa$, is locally $(\kappa, bs,bs)$-nice if for every $\eta\in A\backslash P^A_\omega$, $(Suc_A(\eta),<)$ is $(\kappa, bs,bs)$-nice, $Suc_A(\eta)$ is infinite, and there is $\xi\in P_\omega^A$ such that $\eta\prec\xi$.
\item $A\in K^\omega_{tr}$ is $(<\kappa, bs)$-stable if for every $B\subseteq A$ of size smaller than $\kappa$, $$\kappa>|\{tp_{bs}(a,B,A)\mid a\in A\}|.$$
\end{itemize}
\end{defn}

In \cite{Sh}, Shelah used $(<\kappa, bs)$-stable locally $(\kappa, bs,bs)$-nice ordered trees to construct the models of unsuperstable theories. In \cite{HT} Hyttinen and Tuuri give a very good example of a $(<\kappa, bs)$-stable $(\kappa, bs,bs)$-nice linear order, which is crucial for the construction of ordered trees.

\begin{defn}[Hyttinen-Tuuri, \cite{HT}]\label{HT-order}
Let $\mathcal{R}$ be the set of functions $f:\omega\rightarrow\kappa$ for which $\{n\in\omega\mid f(n)\neq 0\}$ is finite. If $f,g\in \mathcal{R}$, then $f<g$ if and only if $f(n)<g(n)$, where $n$ is the least number such that $f(n)\neq g(n)$.
\end{defn}

\begin{fact}[Hyttinen-Tuuri, \cite{HT}, Lemma 8.17]\label{HT-stable}
\begin{itemize}
\item The linear order $\mathcal{R}$ is $(<\kappa, bs)$-stable and $(\kappa, bs,bs)$-nice.
\item There is a $\kappa$-representation $\langle R_\alpha\mid\alpha<\kappa\rangle$ and a club $C\subseteq \kappa$ such that for all $\delta\in C$ and $\nu\in \mathcal{R}$ there is $\beta<\delta$ which satisfies the following:
$$\forall\sigma\in R_\delta [\sigma>\nu \Rightarrow \exists\sigma'\in R_\beta\ (\sigma\ge\sigma'\ge\nu)]$$
\end{itemize}
\end{fact}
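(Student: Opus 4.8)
The plan is to reconstruct Hyttinen and Tuuri's argument, treating $\mathcal{R}$ as a dense linear order of size $\kappa$ and working throughout with the natural $\kappa$-representation $R_\alpha=\{f\in\mathcal{R}\mid \forall n\,(f(n)<\alpha)\}$. First I would check this is a $\kappa$-representation: each $R_\alpha$ has size at most $\omega\cdot|\alpha|<\kappa$ (a finitely supported sequence with values below $\alpha$ is coded by a finite sequence from $\alpha$), the sequence is plainly increasing, and it is continuous because any $f\in R_\lambda$ has finite support and hence $\sup_n f(n)<\lambda$, placing $f$ in some $R_\alpha$ with $\alpha<\lambda$; its union is all of $\mathcal{R}$.

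For $(<\kappa,bs)$-stability, fix $B\subseteq\mathcal{R}$ with $|B|=\mu<\kappa$. Since $\mathcal{R}$ is a linear order, $tp_{bs}(a,B,\mathcal{R})$ is just the cut that $a$ determines in $B$, so I must bound the number of distinct cuts realized by elements of $\mathcal{R}$. The key device is the tree $T_B=\{b\restriction n\mid b\in B,\ n\in\omega\}\subseteq\kappa^{<\omega}$ of all prefixes of members of $B$, which has $|T_B|\leq\mu$ nodes. For $a\in\mathcal{R}$ that leaves $T_B$, let $t=a\restriction k$ be the last prefix of $a$ lying in $T_B$; then the comparison of $a$ with every $b\in B$ is decided either at the point where $b$ diverges from $t$ or, when $b$ extends $t$, by the position of $a(k)$ among the $T_B$-children $S_t$ of $t$ (note $a(k)\notin S_t$ while $b(k)\in S_t$). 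Hence the type of such an $a$ is determined by the pair consisting of $t$ and the gap of $a(k)$ in $S_t$, and summing $|S_t|+1$ over $t\in T_B$ yields at most $\mu$ such types. The remaining elements are those $a$ with $a\restriction n\in T_B$ for all $n$, i.e. the cofinal branches of $T_B$; there are at most $\mu^\omega$ of these, and here is exactly where the standing assumption $\gamma^\omega<\kappa$ for $\gamma<\kappa$ is used to keep this below $\kappa$. Together the number of realized types is at most $\mu+\mu^\omega<\kappa$.

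For the second clause I would prove the approximation-from-below property directly for the representation above, with $C$ the club of limit ordinals. Fix a limit $\delta$ and an arbitrary $\nu\in\mathcal{R}$; the essential point is that $\nu$ need not lie in $R_\delta$, so its values may be unbounded in $\kappa$. Let $m_0$ be least with $\nu(m_0)\geq\delta$ (and $m_0=\infty$ if there is none). If $\sigma\in R_\delta$ satisfies $\sigma>\nu$, their first difference $m$ must satisfy $m<m_0$, because $\sigma(m)>\nu(m)$ together with $\sigma(m)<\delta$ forces $\nu(m)<\delta$, while agreement before $m$ rules out passing the coordinate $m_0$. Defining $\sigma'$ by $\sigma'(i)=\nu(i)$ for $i<m$, $\sigma'(m)=\nu(m)+1$, and $\sigma'(i)=0$ for $i>m$, one checks $\nu<\sigma'\leq\sigma$, while every value of $\sigma'$ is bounded by $\max_{i<m_0}\nu(i)+2$. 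Since this bound depends only on $\nu$ and $\delta$ and not on $\sigma$, setting $\beta=\max_{i<m_0}\nu(i)+2<\delta$ yields a single $\beta$ working for all $\sigma$, as required.

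Finally I would deduce $(\kappa,bs,bs)$-niceness from this approximation property. For a linear order, $a$ splitting over $R_\beta$ means there are $b_1<a<b_2$ in $R_\delta$ realizing the same cut over $R_\beta$. Given $\delta\in C$ and any $a\in\mathcal{R}$, apply the approximation property with $\nu=a$ to obtain $\beta<\delta$; if $a$ split over this $R_\beta$ with witnesses $b_1<a<b_2$, the property applied to $b_2>a$ produces $\sigma'\in R_\beta$ with $a\leq\sigma'\leq b_2$, whence $b_1<\sigma'\leq b_2$ with $\sigma'\in R_\beta$, which shows $b_1$ and $b_2$ do not realize the same cut over $R_\beta$, a contradiction. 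Hence no $a$ splits over all $R_\beta$ with $\beta<\delta$, so $Sp_{bs}(\langle R_\alpha\rangle)$ is disjoint from the club $C$ and is therefore $=^2_{CUB}\emptyset$. I expect the main obstacle to be the second clause: obtaining the single, $\sigma$-independent ordinal $\beta$ (equivalently, controlling the coordinate at which the witnesses branch away from $\nu$, given that $\nu$ itself escapes $R_\delta$) is the delicate point, and the stability count is delicate precisely at the cofinal branches, where $\gamma^\omega<\kappa$ is indispensable.
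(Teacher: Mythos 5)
Your proposal is essentially correct, but note that the paper itself never proves this statement: it is imported as a black box from Hyttinen--Tuuri, so the only comparison available is with the analogous arguments the paper \emph{does} give for its modified order $I^0$. There your reconstruction lines up closely: your approximation-from-below argument (locate the first difference $m$, check $m<m_0$ where $m_0$ is the first coordinate at which $\nu$ escapes $\delta$, then define $\sigma'$ to copy $\nu$ below $m$, bump by $1$ at $m$, and vanish above) is exactly the template of Lemma \ref{first_order_result} and Corollary \ref{stricti_minor}, except that for $\mathcal{R}$ no case split via density of $\mathbb{Q}$ is available, which is precisely why you only get $\nu<\sigma'\le\sigma$ rather than strict inequalities --- a useful consistency check, since the paper introduces the $\mathbb{Q}$-coordinate exactly to upgrade $\ge$ to $>$. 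Your stability count via the prefix tree $T_B$ (types of elements leaving $T_B$ are parametrized by a node $t$ and a cut of $a(k)$ in the child set $S_t$, plus at most $\mu^\omega$ cofinal branches) is a sound reconstruction and correctly isolates where the hypothesis $\gamma^\omega<\kappa$ enters; note the count of cuts in $S_t$ by $|S_t|+1$ is valid only because $S_t$ is a set of ordinals, hence well-ordered --- for a general linear order this step would fail. Finally, your derivation of niceness from the approximation property in effect reproves one direction of the equivalence that the paper also cites as a black box (Hyttinen--Tuuri Lemma 8.12), so your argument is self-contained where the paper's is not.

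One boundary case needs patching in the niceness step. For a linear order, $tp_{bs}(a,R_\delta,\mathcal{R})$ splitting over $R_\beta$ means there are $b_1<b_2$ in $R_\delta$ with the same bs-type over $R_\beta$ and $b_1\le a\le b_2$; your formulation $b_1<a<b_2$ omits the cases $a=b_1$ and $a=b_2$, which genuinely occur when $a\in R_\delta\setminus R_\beta$ (then the pair types differ because one pair satisfies $x_1=x_2$). The fix is one line: when $a\in R_\delta$, enlarge the $\beta$ given by the approximation property so that also $a\in R_\beta$ (possible since $\beta$ may be increased freely below $\delta$); then any witnesses with $a\in\{b_1,b_2\}$ already have distinct types over $R_\beta$, and the strict case $b_1<a<b_2$ is killed by your separation argument. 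With that amendment the proof is complete.
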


\subsection{Colorable orders}

As it was mentioned in the previous subsection, the linear order plays a crucial roll when we construct the ordered trees and therefore the models. For our purpose to construct ordered trees from colored trees, we will need to choose the right linear order. The linear order that we will use are the colorable linear orders.

\begin{defn}
Let $I$ be a linear order of size $\kappa$. We say that $I$ is $\kappa$-colorable if there is a function $F: I\rightarrow\kappa$ such that for all $B\subseteq I$, $|B|<\kappa$, $b\in I\backslash B$, and $p=tp_{bs}(b,B,I)$ such that the following hold:
For all $\alpha\in \kappa$, $|\{a\in I\mid a\models p\ \&\  F(a)=\alpha\}|=\kappa$.
\end{defn}
 We say that $F$ is a $\kappa$-coloration of $I$, if $F$ witnesses that I is a $\kappa$-colorable linear order.

We will modify the order of Definition \ref{HT-order} to construct a $(<\kappa, bs)$-stable $(\kappa, bs,bs)$-nice  $\kappa$-colorable linear order.

\begin{defn}
Let $\mathbb{Q}$ be the linear order of the rational numbers.
Let $\kappa\times\mathbb{Q}$ be order by the lexicographic order, $I^0$ be the set of functions $f:\omega\rightarrow\kappa\times\mathbb{Q}$ such that $f(n)=(f_1(n),f_2(n))$, for which $\{n\in\omega\mid f_1(n)\neq 0\}$ is finite. If $f,g\in I^0$, then $f<g$ if and only if $f(n)<g(n)$, where $n$ is the least number such that $f(n)\neq g(n)$.
\end{defn}

\begin{lemma}\label{first_order_result}
There is a $\kappa$-representation $\langle I^0_\alpha\mid\alpha<\kappa\rangle$ such that for all limit $\delta<\kappa$ and $\nu\in I^0$ there is $\beta<\delta$ which satisfies the following:
$$\forall\sigma\in I^0_\delta [\sigma>\nu \Rightarrow \exists\sigma'\in I^0_\beta\ (\sigma\ge\sigma'\ge\nu)]$$
\end{lemma}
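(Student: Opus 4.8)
The claim is that the order $I^0$ admits a $\kappa$-representation with the same approximation property as in Fact~\ref{HT-stable}(2) for $\mathcal{R}$. My plan is to transport the $\mathcal{R}$-representation through the natural projection $I^0 \to \mathcal{R}$ and then augment it to account for the extra $\mathbb{Q}$-coordinates. Let me think about how the orders relate.

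Let me understand the structure. An element $f \in I^0$ is a function $f\colon \omega \to \kappa \times \mathbb{Q}$ with $f(n) = (f_1(n), f_2(n))$, where $f_1$ has finite support (finitely many nonzero values in $\kappa$); there's no support restriction on $f_2$ in $\mathbb{Q}$. Comparison is the first-difference (lexicographic on $\omega$) order, with each coordinate value $(f_1(n), f_2(n)) \in \kappa \times \mathbb{Q}$ compared lexicographically.
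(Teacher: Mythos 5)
Your proposal stops before the proof starts: after announcing the plan (transport the representation of $\mathcal{R}$ from Fact~\ref{HT-stable} through the projection $f\mapsto f_1$ and ``augment'' it) you only restate the definition of $I^0$ and never carry out a single step. As it stands there is nothing to verify, so the burden is on the plan itself, and the plan has two concrete problems. First, Fact~\ref{HT-stable} gives the approximation property only for $\delta$ in some club $C\subseteq\kappa$, whereas the lemma demands it for \emph{every} limit $\delta<\kappa$; transporting the $\mathcal{R}$-representation can at best reproduce the club-restricted statement, so the stronger quantifier must come from somewhere else --- in the paper it comes from choosing the representation explicitly, namely $I^0_\gamma=\{\nu\in I^0\mid \nu_1(n)<\gamma\text{ for all }n<\omega\}$, for which the bounding ordinal $\beta<\delta$ can be produced by hand for an arbitrary limit $\delta$. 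Second, the projection $I^0\to\mathcal{R}$, $f\mapsto f_1$, is not order-preserving: if the first difference of $f$ and $g$ occurs at $n$ with $f_1(n)=g_1(n)$ and $f_2(n)<g_2(n)$, then $f<g$ while $f_1$ and $g_1$ may compare either way (they can differ arbitrarily beyond $n$). So the order-theoretic content of the lemma cannot simply be pulled back along this map; any ``augmentation'' has to confront this directly.

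What is actually missing is the one idea the lemma turns on, and which your sketch never mentions: when $\nu\notin I^0_\delta$, one picks $\beta<\delta$ bounding all values $\nu_1(n)$ that happen to lie below $\delta$, and then, given $\sigma\in I^0_\delta$ with $\sigma>\nu$ and first difference at $n$, one builds a witness $\sigma'$ whose first coordinates are below $\beta$ by copying $\nu$ up to $n$ and exploiting the density of $\mathbb{Q}$ in the second coordinate (taking $\nu_2(n)<r<\sigma_2(n)$ when $\sigma_1(n)=\nu_1(n)$, or the value $(\nu_1(n),\nu_2(n)+1)$ when $\sigma_1(n)>\nu_1(n)$), and padding with $0$ afterwards. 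This is precisely the reason $I^0$ carries $\mathbb{Q}$-coordinates at all, and it is also what makes the witness \emph{strictly} between $\nu$ and $\sigma$ --- the strengthening recorded in Corollary~\ref{stricti_minor} and used throughout the rest of Section~\ref{section_colorable_linear_order}. Without this construction (or a substitute for it), your approach does not yield the lemma, let alone the strict form the paper needs afterwards.
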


\begin{proof}
For all $\gamma<\kappa$, let us define $\langle I^0_\alpha\mid\alpha<\kappa\rangle$ by $$I^0_\gamma=\{\nu\in I^0\mid \nu_1(n)<\gamma\textit{ for all } n<\omega\}$$
it is clear that $\langle I^0_\alpha\mid\alpha<\kappa\rangle$ is a $\kappa$-representation. 

Suppose $\delta<\kappa$ is a limit and $\nu\in I^0$. If $\nu\in I^0_\delta$, then there is $\beta<\delta$ such that $\nu\in I^0_\beta$ and the result follows.

Let us take care of the case $\nu\notin I^0_\delta$. Let $\beta<\delta$ be the least ordinal such that for all $n<\omega$, $\nu_1(n)<\delta$ implies $\nu_1(n)<\beta$.

\begin{claim}
For all $\sigma\in I^0_\delta$. If $\sigma> \nu$, then there is $\sigma'\in I^0_\beta$ such that $\sigma\neq\sigma'$ and $\sigma>\sigma'>\delta$.
\end{claim}
\begin{proof}
Let us suppose $\sigma\in I^0_\delta$ is such that $\sigma\ge \nu$. By the definition of $I^0$, there is $n<\omega$ such that $\sigma(n)>\nu(n)$ and $n$ is the minimum number such that $\sigma(n)\neq \nu(n)$. Since $\sigma\in I^0_\delta$, for all $m\leq n$, $\nu_1(m)\leq \sigma_1(m)<\delta$. Thus for all $m\leq n$, $\nu_1(m)<\beta$.

Let us divide the proof in two cases, $\sigma_1(n)=\nu_1(n)$ and $\sigma_1(n)>\nu_1(n)$.

{\bf Case 1.} $\sigma_1(n)=\nu_1(n)$.

By the density of $\mathbb{Q}$ there is $r$ such that $\sigma_2(n)>r>\nu_2(n)$. Let us define $\sigma'$ by:
$$\sigma'(m)=\begin{cases} \nu(m) &\mbox{if } m<n\\
(\nu_1(n), r) & \mbox{if } m=n\\
0 & \mbox{in other case. } \end{cases}$$  
Clearly $\sigma>\sigma'>\nu$. Since $\nu_1(m)<\beta$ for all $m\leq n$, $\sigma'\in I^0_\beta$.

{\bf Case 2.} $\sigma_1(n)>\nu_1(n)$.

Let us define $\sigma'$ by:
$$\sigma'(m)=\begin{cases} \nu(m) &\mbox{if } m<n\\
(\nu_1(n), \nu_2(n)+1) & \mbox{if } m=n\\
0 & \mbox{in other case. } \end{cases}$$  
Clearly $\sigma>\sigma'>\nu$. Since $\nu_1(m)<\beta$ for all $m\leq n$, $\sigma'\in I^0_\beta$.
\end{proof}
\end{proof}

\begin{cor}\label{stricti_minor}
There is a $\kappa$-representation $\langle I^0_\alpha\mid\alpha<\kappa\rangle$ such that for all limit $\delta<\kappa$ and $\nu\in I^0$, if $\nu\notin I^0_\delta$ there is $\beta<\delta$ which satisfies the following:
$$\forall\sigma\in I^0_\delta [\sigma>\nu \Rightarrow \exists\sigma'\in I^0_\beta\ (\sigma>\sigma'>\nu)]$$
\end{cor}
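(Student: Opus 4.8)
The corollary is a strict-inequality refinement of Lemma \ref{first_order_result}: instead of $\sigma \ge \sigma' \ge \nu$ we want $\sigma > \sigma' > \nu$, under the extra hypothesis $\nu \notin I^0_\delta$. The plan is to use the very same $\kappa$-representation $\langle I^0_\alpha \mid \alpha < \kappa\rangle$ defined in the lemma, and to observe that the Claim proved inside the lemma already delivers a $\sigma'$ with the strict inequality $\sigma > \sigma' > \nu$ whenever $\nu \notin I^0_\delta$. Indeed, the two explicit definitions of $\sigma'$ in Case 1 and Case 2 of that Claim produce an element strictly between $\nu$ and $\sigma$: in Case 1 the density of $\mathbb{Q}$ gives $\sigma_2(n) > r > \nu_2(n)$, forcing $\sigma > \sigma' > \nu$, and in Case 2 the choice $(\nu_1(n), \nu_2(n)+1)$ with $\sigma_1(n) > \nu_1(n)$ again yields $\sigma > \sigma' > \nu$.

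First I would reduce to the case $\nu \notin I^0_\delta$, which is exactly the hypothesis now being assumed; this eliminates the first (trivial) case of the lemma's proof, where $\nu \in I^0_\delta$ and one simply took $\sigma' = \nu$, giving only the non-strict inequality. Next I would fix $\beta < \delta$ to be the least ordinal such that $\nu_1(n) < \delta$ implies $\nu_1(n) < \beta$ for all $n < \omega$, precisely as in the lemma. Then, for an arbitrary $\sigma \in I^0_\delta$ with $\sigma > \nu$, I would invoke the Claim to extract the witness $\sigma' \in I^0_\beta \subseteq I^0_\delta$, and simply read off from its construction that $\sigma > \sigma' > \nu$ with $\sigma' \ne \nu$.

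The one point requiring genuine care is that the Claim as stated inside the lemma concludes ``$\sigma > \sigma' > \delta$'', which appears to be a typo for ``$\sigma > \sigma' > \nu$''; I would verify directly from the two case definitions of $\sigma'$ that the intended and correct conclusion is strict betweenness with respect to $\nu$, and record this as the content of the corollary. The main (and only) obstacle is therefore bookkeeping rather than mathematical: confirming that both explicit witnesses are strictly above $\nu$ (not merely $\ge \nu$) and lie in $I^0_\beta$, which follows because $\nu_1(m) < \beta$ for all $m \le n$ by the choice of $\beta$. Since no new construction is needed, the proof is a one-line appeal to the Claim together with this verification.
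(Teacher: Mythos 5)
Your proposal is correct and matches the paper's intent exactly: the corollary is stated without a separate proof precisely because it is read off from the Claim inside the proof of Lemma \ref{first_order_result}, whose two explicit constructions of $\sigma'$ (via density of $\mathbb{Q}$ in Case 1, and via $(\nu_1(n),\nu_2(n)+1)$ in Case 2) give strict betweenness $\sigma>\sigma'>\nu$ with $\sigma'\in I^0_\beta$ whenever $\nu\notin I^0_\delta$. You are also right that the Claim's stated conclusion ``$\sigma>\sigma'>\delta$'' is a typo for ``$\sigma>\sigma'>\nu$'', as its own case analysis confirms.
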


Now let us used the order $I^0$ to construct a $(<\kappa, bs)$-stable, $(\kappa, bs,bs)$-nice, and $\kappa$-colorable linear order.
Let us construct the linear orders $\langle I^i\mid i<\kappa\rangle$ by induction, such that for all $i<j$, $I^i\subseteq I^j$.
Suppose $i<\kappa$ is such that $I^i$ has been defined. For all $\nu\in I^i$ let $\nu^{i+1}$ be such that 
\begin{equation}\label{generated_element}
\nu^{i+1}\models tp_{bs}(\nu, I^i\backslash \{\nu\}, I^i)\cup\{\nu>x\}.
\end{equation}
Notice that $\nu^{i+1}$ is a copy of $\nu$ that is smaller than $\nu$. Let $I^{i+1}=I^i\cup\{\nu^{i+1}\mid \nu\in I^i\}$.

Suppose $i<\kappa$ is a limit ordinal such that for all $j<i$, $I^j$ has been defined, we define $I^i$ by $I^i=\bigcup_{j<i}I^j$.
 
For all $i<\kappa$, let us define the $\kappa$-representation $\langle I^i_\alpha\mid\alpha<\kappa\rangle$ by induction as follows:

Suppose $i<\kappa$ is such that $\langle I^i_\alpha\mid\alpha<\kappa\rangle$ has been defined. For all $\alpha<\kappa$, $$I^{i+1}_\alpha=I^i_\alpha\cup \{\nu^{i+1}\mid \nu\in I^i_\alpha\}.$$
Suppose $i<\kappa$ is a limit ordinal such that for all $j<i$, $\langle I^j_\alpha\mid\alpha<\kappa\rangle$ has been defined, we define $\langle I^i_\alpha\mid\alpha<\kappa\rangle$ by $$I^i_\alpha=\bigcup_{j<i} I^j_\alpha .$$

Finally, let us define $I$ as $$I=\bigcup_{j<\kappa} I^j$$ and the $\kappa$-representation $\langle I_\alpha\mid\alpha<\kappa\rangle$ as $$I_\alpha=\bigcup_{\alpha<\kappa} I^\alpha_\alpha .$$

Before we are able to prove the main result of this section, we will need to develop the theory of $I$.

\begin{defn}[Generator]\label{Generator}
For all $\nu\in I$ let us denote by $o(\nu)$ the least ordinal $\alpha<\kappa$ such that $\nu\in I^\alpha$.
Let us denote the generator of $\nu$ by $Gen(\nu)$ and define it by induction as follows:
\begin{itemize}
\item $Gen^i(\nu)=\emptyset$, for all $i<o(\nu)$;
\item $Gen^i(\nu)=\{\nu\}$, for $i=o(\nu)$;
\item for all $i\ge o(\nu)$, $$Gen^{i+1}(\nu)=Gen^i(\nu)\cup\{\sigma\in I^{i+1}\mid \exists \tau\in Gen^i(\nu)\ [\tau^{i+1}=\sigma]\};$$
\item for all $i<\kappa$ limit, $$Gen^i(\nu)=\bigcup_{j<i}Gen^j(\nu).$$
\end{itemize} 
Finally, let $$Gen(\nu)=\bigcup_{i<\kappa}Gen^i(\nu).$$
\end{defn}

Notice that $o(\nu)$ is a successor ordinal for all $\nu$. For clarity purposes let us fix the following notation.

{\bf Notation.} For any sequence of length $\alpha$,  $\{\sigma_j\}_{j<\alpha}$ of elements of $I^i$, we will denote by $(\sigma_j)^{i+1}$ the element generated by $\sigma_j$ in $I^{i+1}$, i.e. $\sigma_j^{i+1}$  (see (\ref{generated_element}) above).

\begin{fact}
Suppose $\nu\in I$. For all $\sigma\in Gen(\nu)$, $\sigma\neq\nu$, there is $n<\omega$ and a sequence $\{\sigma_i\}_{i\leq n}$ such that the following holds:
\begin{itemize}
\item $\sigma_0=\nu$;
\item for all $j<n$, $$\sigma_{j+1}=(\sigma_j)^{o(\sigma_{j+1})};$$
\item $\sigma=\sigma_n=(\sigma_{n-1})^{o(\sigma)}$
\end{itemize}
\end{fact}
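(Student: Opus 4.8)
The plan is to prove this by induction on the order $o(\sigma)$ of the element $\sigma\in Gen(\nu)$, exploiting the explicit layered definition of the generator. The key observation is that the sequence of sets $Gen^i(\nu)$ grows only at successor stages, and at stage $i+1$ a new element $\tau^{i+1}$ enters precisely when its (unique) immediate ancestor $\tau$ already lies in $Gen^i(\nu)$. So every element of $Gen(\nu)$ other than $\nu$ itself is the $(o(\cdot))$-image of exactly one earlier element of the generator, and tracing these images backwards should terminate at $\nu$.

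First I would establish an auxiliary claim: for every $\sigma\in Gen(\nu)$ with $\sigma\neq\nu$, the ordinal $o(\sigma)$ is a successor, say $o(\sigma)=i+1$, and there is a unique $\tau\in Gen^i(\nu)$ with $\tau^{i+1}=\sigma$ and $o(\tau)\le i<o(\sigma)$. This follows directly from unwinding Definition \ref{Generator}: since $\sigma\notin Gen^j(\nu)$ for $j\le i$ but $\sigma\in Gen^{i+1}(\nu)$, and the limit and successor clauses for non-generated stages add nothing new, the successor clause must be responsible, which supplies the required $\tau$. Uniqueness comes from the fact that the map $x\mapsto x^{i+1}$ sending $I^i$ into $I^{i+1}$ is injective (each $\nu^{i+1}$ is a fresh copy attached to a single $\nu$), so $\tau$ is determined by $\sigma$. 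Here I would also record that $o(\tau)<o(\sigma)$, which is what makes the backward-tracing process strictly decreasing.

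Next I would run the descent. Starting from $\sigma$, set $\sigma=\tau_0$ and repeatedly apply the auxiliary claim to obtain $\tau_1$ with $\tau_1^{o(\tau_0)}=\tau_0$ and $o(\tau_1)<o(\tau_0)$, then $\tau_2$, and so on. Since the ordinals $o(\tau_0)>o(\tau_1)>\cdots$ are strictly decreasing, this chain must be finite, terminating at some $\tau_m$ with $\tau_m=\nu$ (it cannot terminate anywhere else, because any element of the generator that is not $\nu$ admits a further predecessor by the claim). Reversing the order of the chain and relabelling $\sigma_j=\tau_{n-j}$ with $n=m$ yields $\sigma_0=\nu$, the required recursion $\sigma_{j+1}=(\sigma_j)^{o(\sigma_{j+1})}$ for $j<n$, and $\sigma=\sigma_n=(\sigma_{n-1})^{o(\sigma)}$. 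The finiteness of $n$ is exactly the statement that $n<\omega$.

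The main obstacle I anticipate is verifying that the backward chain is genuinely finite with finite length, i.e. that $n<\omega$ rather than merely having well-founded (possibly infinite ordinal) length. A strictly decreasing sequence of ordinals is automatically finite, so the real content is confirming strict decrease at every step, which rests on the claim $o(\tau)<o(\sigma)$ above; this in turn needs care about the boundary behaviour at limit stages, since the limit clause $Gen^i(\nu)=\bigcup_{j<i}Gen^j(\nu)$ means no element can \emph{first} appear at a limit ordinal, forcing every $o(\sigma)$ to be a successor (as already noted after the definition). Once that is pinned down the descent is routine, so I expect the bulk of the careful writing to go into the uniqueness and successor-ordinal portions of the auxiliary claim.
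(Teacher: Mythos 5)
Your proposal is correct and follows essentially the same route as the paper's proof: unwind the definition of $Gen(\nu)$ to trace the chain of generating elements backwards from $\sigma$, and conclude the chain is finite because there is no infinite strictly decreasing sequence of ordinals. Your write-up is more careful than the paper's (making explicit that new elements can only enter the generator at successor stages, that the generating predecessor is unique, and that the descent must terminate at $\nu$), but the underlying argument is identical.
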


\begin{proof}
Let $\sigma\neq\nu$ be such that $\sigma\in Gen(\nu)$. From Definition \ref{Generator}, we know that there is $i<\kappa$ such that $\sigma\in Gen^{i+1}(\nu)$. Thus, there is $\tau\in I^i$ and $\tau^{i+1}=\sigma$. We conclude that there is a sequence of length $\alpha$, $\{\sigma_i\}_{i\leq \alpha}$ such that the following holds:
\begin{itemize}
\item $\sigma_0=\nu$;
\item for all $j<\alpha$, $$\sigma_{j+1}=(\sigma_j)^{o(\sigma_{j+1})};$$
\item there is $\alpha$, such that $\sigma=\sigma_\alpha=(\sigma_{\alpha-1})^{o(\sigma)}$.
\end{itemize}
On the other hand, since there are no infinite decreasing sequence of ordinals, $\alpha$ is finite.
\end{proof}

For every $\nu\in I$, $\sigma\in Gen(\nu)$, and $\sigma\neq\nu$, we call the sequence $\{\sigma_i\}_{i\leq n}$ of the previous fact, \textit{the road from $\nu$ to $\sigma$}. It is clear that for all $\nu\in I\backslash I^0$, there is $\nu'\in I^0$ such that $\nu\in Gen(\nu')$. Notice that for all $\nu\in I$, if $\sigma\in Gen(\nu)$, then $\nu$ and $\sigma$ have the same type of basic formulas over $I^{o(\nu)}\backslash \{\nu\}$. Even more, if $\{\sigma_i\}_{i\leq n}$ is the road from $\nu$ to $\sigma$, then for all $i< n$, $\sigma_i$ and $\sigma$ have the same type of basic formulas over $I^\gamma\backslash\{\sigma_i\}$, where $o(\sigma_{i+1})=\gamma+1$.

\begin{fact}\label{element_of_delta}
Let $i,\delta, \nu$ be such that  $\nu\in I^i_\delta$. Then for all $\sigma\in Gen(\nu)$, $\sigma\in I^{o(\sigma)}_\delta$. In particular for all $j<\kappa$ $$\sigma\notin
I^j_\delta\Rightarrow \sigma\notin I^j.$$ 
\end{fact}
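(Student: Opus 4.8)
The plan is to prove the main assertion $\sigma \in I^{o(\sigma)}_\delta$ by induction along the road from $\nu$ to $\sigma$, and then derive the contrapositive-flavored ``In particular'' clause as an immediate consequence. First I would dispose of the trivial case $\sigma = \nu$: here $o(\sigma) = o(\nu) = i$ (or at least $\nu \in I^{o(\nu)}_\delta$ follows from $\nu \in I^i_\delta$ together with the monotonicity of the representation in the superscript), so there is nothing to prove. For the nontrivial case, I would invoke the preceding Fact to fix the road $\{\sigma_j\}_{j \le n}$ from $\nu$ to $\sigma$, with $\sigma_0 = \nu$, $\sigma_n = \sigma$, and $\sigma_{j+1} = (\sigma_j)^{o(\sigma_{j+1})}$ for each $j < n$.

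The heart of the argument is a single induction step, which I would isolate as the claim that \emph{if $\tau \in I^{o(\tau)}_\delta$, then $\tau^{i+1} \in I^{i+1}_\delta$ for the relevant $i+1 = o(\tau^{i+1})$}. This is exactly what the recursive definition of the representation gives us: recall that $I^{i+1}_\alpha = I^i_\alpha \cup \{\nu^{i+1} \mid \nu \in I^i_\alpha\}$, so an element generated from $\tau$ at stage $i+1$ lands in $I^{i+1}_\delta$ precisely when $\tau$ already belongs to $I^i_\delta$. Applying this successively along the road, starting from $\sigma_0 = \nu \in I^{o(\nu)}_\delta$ (which holds by hypothesis, since $\nu \in I^i_\delta$ forces $\nu \in I^{o(\nu)}_\delta$), I would conclude at each stage that $\sigma_{j+1} \in I^{o(\sigma_{j+1})}_\delta$, and the last stage yields $\sigma = \sigma_n \in I^{o(\sigma)}_\delta$.

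The ``In particular'' clause then follows by a clean monotonicity observation. Having established $\sigma \in I^{o(\sigma)}_\delta$, suppose $\sigma \in I^j$ for some $j < \kappa$; I want to show $\sigma \in I^j_\delta$. By definition of $o(\sigma)$ as the least ordinal with $\sigma \in I^{o(\sigma)}$, membership in $I^j$ forces $j \ge o(\sigma)$, and since the superscript chain $\langle I^\alpha \rangle$ and its representations are increasing in $\alpha$ (and $\sigma$, once placed in the $\delta$-th piece at its stage of birth, stays there), we get $\sigma \in I^{o(\sigma)}_\delta \subseteq I^j_\delta$. Contrapositively, $\sigma \notin I^j_\delta$ implies $\sigma \notin I^j$, which is the displayed conclusion.

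I expect the main obstacle to be purely bookkeeping rather than conceptual: one must be careful that the subscript $\delta$ is genuinely preserved at each step, i.e.\ that generation at stage $i+1$ does not push an element into a strictly larger piece of the representation. This is guaranteed by the defining equation $I^{i+1}_\alpha = I^i_\alpha \cup \{\nu^{i+1} \mid \nu \in I^i_\alpha\}$, which assigns $\nu^{i+1}$ to the \emph{same} index $\alpha$ as its parent $\nu$; I would state this explicitly as the justification for the induction step. The only subtlety worth double-checking is the base case bridging $\nu \in I^i_\delta$ to $\nu \in I^{o(\nu)}_\delta$ when $o(\nu) < i$, which again reduces to the monotonicity of $\langle I^\alpha_\delta \mid \alpha < \kappa\rangle$ in the superscript.
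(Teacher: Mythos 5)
Your overall decomposition is the natural unfolding of the paper's one-sentence proof (the paper just says the Fact ``follows from the construction''): fix the road $\{\sigma_j\}_{j\le n}$ from $\nu$ to $\sigma$ and push membership in the $\delta$-th piece through each application of the copy operation, using the defining equation $I^{k+1}_\alpha=I^k_\alpha\cup\{\tau^{k+1}\mid \tau\in I^k_\alpha\}$ together with the fact that the representations increase in the superscript, so that $\sigma_j\in I^{o(\sigma_j)}_\delta$ gives $\sigma_j\in I^{k}_\delta$ for the stage $k$ with $k+1=o(\sigma_{j+1})$. Your induction step and your derivation of the ``in particular'' clause from the main claim are both correct.

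There is, however, one step whose justification is backwards, and it is not cosmetic: the base case. You need that $\nu\in I^i_\delta$ with $o(\nu)\le i$ implies $\nu\in I^{o(\nu)}_\delta$, and you justify this by ``monotonicity of $\langle I^\alpha_\delta\mid\alpha<\kappa\rangle$ in the superscript.'' Monotonicity gives $I^{o(\nu)}_\delta\subseteq I^i_\delta$, which is the converse of what is needed. The correct tool is a coherence property, $I^i_\delta\cap I^{o(\nu)}=I^{o(\nu)}_\delta$, proved by induction on $i\ge o(\nu)$: at a successor stage $k+1$, every element of $I^{k+1}_\delta\setminus I^k_\delta$ is a copy $\tau^{k+1}$ with $\tau\in I^k_\delta$, and such a copy does not belong to $I^k$ at all (it realizes over $I^k\setminus\{\tau\}$ the type of $\tau$ together with $x<\tau$, a type omitted in $I^k$), hence has $o$-value exactly $k+1$; consequently any element of $I^{k+1}_\delta$ whose $o$-value is at most $k$ already lies in $I^k_\delta$. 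At limit stages one uses $I^k_\delta=\bigcup_{l<k}I^l_\delta$. This matters because, applied to $\sigma=\nu$, the Fact's conclusion \emph{is} precisely this downward claim, so as written your argument does not establish even its simplest instance. With this coherence lemma inserted as the base case (and as the justification of your $\sigma=\nu$ case), your proof is complete and is essentially the argument the paper intends.
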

\begin{proof}
It follows from the construction of $I^{o(\sigma)}$ and the $\kappa$-representation $\langle I^{o(\sigma)}_\alpha\mid\alpha<\kappa\rangle$.
\end{proof}

\begin{fact}\label{Same_type_in_Gen}
For all $\nu,\sigma\in I$, $\sigma\in Gen(\nu)$, if $\sigma'\in I$ is such that $\nu\ge\sigma'\ge\sigma$, then $\sigma'\in Gen(\nu)$.
\end{fact}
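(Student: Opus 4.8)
The plan is to reduce the statement to the case where $\sigma$ is an immediate copy of $\nu$, and then run an induction on the creation stage $o(\sigma')$. Two preliminary facts drive everything. First, \emph{monotonicity of the generator}: if $\tau\in Gen(\nu)$ then $Gen(\tau)\subseteq Gen(\nu)$; this follows from a routine induction showing $Gen^i(\tau)\subseteq Gen^i(\nu)$ for all $i$, using that $\tau$ enters $Gen(\nu)$ at stage $o(\tau)$ and that a copy made at stage $i+1$ of an element of $Gen^i(\nu)$ lands in $Gen^{i+1}(\nu)$ by Definition~\ref{Generator}. Second, the \emph{immediate-predecessor property}: when a copy $\tau^{i+1}$ is created via (\ref{generated_element}), no element of $I^{i+1}$ lies strictly between $\tau^{i+1}$ and $\tau$. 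One checks this by comparing a would-be intermediate element to the basic type of $\tau$: old elements of $I^i$ respect that type and so cannot fall in the gap, while a copy $\rho^{i+1}$ made at the same stage lands on the correct side of $\tau^{i+1}$ according to whether $\rho<\tau$ or $\rho>\tau$.

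Next I would use the road from $\nu$ to $\sigma$ to localize. Since each step $\sigma_{j+1}=(\sigma_j)^{o(\sigma_{j+1})}$ is a copy lying below its parent, the road is a strictly decreasing chain $\nu=\sigma_0>\sigma_1>\cdots>\sigma_n=\sigma$, and every $\sigma_j$ lies in $Gen(\nu)$. Given $\sigma'$ with $\sigma\le\sigma'\le\nu$, either $\sigma'$ equals some road point, in which case we are done, or $\sigma'$ is strictly between two consecutive road points, $\sigma_{j+1}<\sigma'<\sigma_j$. By monotonicity it then suffices to show $\sigma'\in Gen(\sigma_j)$. Writing $\mu=\sigma_j$ and $\mu'=\sigma_{j+1}=\mu^m$ with $m=o(\mu')$, the whole statement reduces to the claim: \emph{if $\mu'=\mu^m$ and $\mu'<\sigma'<\mu$, then $\sigma'\in Gen(\mu)$}.

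I would prove this reduced claim by induction on $o(\sigma')$. The immediate-predecessor property forces $\sigma'\notin I^m$, hence $o(\sigma')=k>m\ge 1$; since $o$-values are never limit ordinals (limit stages are unions), $k$ is a successor, so $\sigma'=\rho^k$ is the copy of a unique $\rho\in I^{k-1}$ with $\rho>\sigma'$. I then compare $\rho$ with $\mu$. If $\rho=\mu$, then $\sigma'$ is literally a copy of $\mu$ and so $\sigma'\in Gen(\mu)$. The case $\rho>\mu$ is impossible: we have $\mu\in I^{o(\mu)}\subseteq I^{k}$, so $\mu$ would sit strictly between $\rho^k=\sigma'$ and $\rho$, contradicting the immediate-predecessor property at stage $k$. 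Finally, if $\rho<\mu$, then $\mu'<\sigma'<\rho<\mu$, so $\rho$ satisfies the hypotheses with smaller parameter $o(\rho)=k-1$; the induction hypothesis gives $\rho\in Gen(\mu)$, and therefore its copy $\sigma'=\rho^k\in Gen(\mu)$.

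The main obstacle I anticipate is pinning down the immediate-predecessor property precisely and then deploying it twice: once to guarantee $o(\sigma')>m$ (which simultaneously kills any stray base case in $I^0$ and ensures $o(\sigma')$ is always a successor), and once to exclude the case $\rho>\mu$. Everything else is bookkeeping about when elements enter the generator. The induction also bottoms out cleanly, because the surviving case $\rho<\mu$ itself forces $o(\rho)>m$ by the immediate-predecessor property, so the interval strictly between two consecutive road points never contains an element created at a stage $\le m$.
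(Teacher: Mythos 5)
Your proof is correct, but it is organized quite differently from the paper's. Both arguments run on the same engine, namely property (\ref{generated_element}) that a copy realizes its parent's basic type over the earlier order together with $\{\nu > x\}$, but the paper proves Fact \ref{Same_type_in_Gen} by a direct three-way case analysis on $o(\nu)$ versus $o(\sigma')$: in the equal-stage case the type-transfer property immediately forces $\sigma'\in\{\nu,\sigma\}$, and the two unequal-stage cases are reduced to it by lifting $\nu$ (respectively $\sigma'$) in one jump to its ancestor at the other element's creation stage. You instead (i) isolate transitivity of $Gen$ and an immediate-predecessor lemma (no element of $I^{i+1}$ lies strictly between $\tau^{i+1}$ and $\tau$), (ii) use the road to reduce to the single-copy case $\mu^{m}<\sigma'<\mu$, and (iii) run an induction on the creation stage $o(\sigma')$, tracing $\sigma'$ back through its parents one stage at a time until it reaches $\mu$. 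Your route is longer but more modular: the immediate-predecessor lemma makes explicit the tree structure of $I$ (which the paper only mentions in passing, in the proof of Theorem \ref{A_desire_order}, via $Succ_I$), it pins down why the relative order of two same-stage copies is determined by their parents, and it cleanly explains why later stages can fill the gap $(\mu^{m},\mu)$ only with descendants of $\mu$ -- including why your induction bottoms out at stage $m+1$. The paper's argument is shorter and avoids induction altogether, at the cost of terser bookkeeping about which case reduces to which; note also that the paper uses transitivity of $Gen$ silently (e.g., when it concludes from $\sigma'\in Gen(\sigma^0)$ and $\sigma^0\in\{\nu,\sigma\}$ that $\sigma'\in Gen(\nu)$), whereas you prove it explicitly.
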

\begin{proof}
If $\nu=\sigma$, the result follows. Thus we only need to prove the case $\nu\neq\sigma$. Let us suppose towards contradiction that $\sigma'\notin Gen(\nu)$.

{\bf Case $o(\nu)=o(\sigma')$.} Since $\nu$ and $\sigma$ have the same type of basic formulas over $I^{o(\nu)}\backslash \{\nu\}$, $\nu$ and $\sigma$ have the same type of basic formulas over $I^{o(\sigma')}\backslash \{\nu\}$.  Since $\nu\ge\sigma'\ge\sigma$, $\nu=\sigma'$ a contradiction.

{\bf Case $o(\sigma')<o(\nu)$.} Since $\nu\ge\sigma'$, there is $\nu'\neq\sigma'$ such that $\nu'>\nu$, $o(\nu')=o(\sigma')$ and $\nu\in Gen(\nu')$. Thus $\nu'$, $\sigma'$, and $\sigma$ satisfy $\nu'\ge\sigma'\ge\sigma$, $o(\nu')=o(\sigma')$, and $\sigma\in Gen(\nu')$. The result follows from the previous case.

{\bf Case $o(\nu)<o(\sigma')$.} There is $\sigma^0\in I$ such that $\sigma^0>\sigma'$, $o(\sigma^0)=o(\nu)$ and $\sigma'\in Gen(\sigma^0)$. If $\nu\ge\sigma^0\ge\sigma$, then the result follows from the previous cases. Therefore, we are only missing the case $\sigma^0\ge\nu\ge\sigma'\ge\sigma$. Since $\sigma^0$ and $\sigma'$ have the same type of basic formulas of basic formulas over $I^{o(\sigma^0)}\backslash\{\sigma^0\}$, $\sigma^0=\nu$ and $\sigma'\in  Gen(\nu)$ a contradiction.
\end{proof}

From the previous fact we can conclude that for all $\nu,\sigma\in I$ such that $\sigma\in Gen(\nu)$, $\nu$ and $\sigma$ have the same type of basic formulas over $I\backslash Gen(\nu)$.

\begin{lemma}\label{nice_for_i}
For all $i<\kappa$, $\delta<\kappa$ a limit ordinal, and $\nu\in I^i$, there is $\beta<\delta$ that satisfies the following:
\begin{equation}
\forall\sigma\in I^i_\delta\ [\sigma>\nu \Rightarrow \exists\sigma'\in I^i_\beta\ (\sigma\ge\sigma'\ge\nu)]
\end{equation}
\end{lemma}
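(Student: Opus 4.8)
We have a linear order $I$ built as an increasing union $I = \bigcup_{j<\kappa} I^j$, where at each successor step $I^{i+1}$ adds a copy $\nu^{i+1} < \nu$ of every $\nu \in I^i$. We're proving a "niceness" property for each level $I^i$: given a limit $\delta$ and $\nu \in I^i$, we find $\beta < \delta$ such that every $\sigma \in I^i_\delta$ above $\nu$ has a witness $\sigma' \in I^i_\beta$ with $\sigma \ge \sigma' \ge \nu$. Lemma 2.4 (the $I^0$ case) already gives this for $i=0$, so the natural approach is transfinite induction on $i$.

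**The plan.** I would prove this by induction on $i<\kappa$. The base case $i=0$ is exactly Lemma 2.4. For the limit case, if $i$ is a limit ordinal then $I^i_\delta = \bigcup_{j<i} I^j_\delta$ and $\nu \in I^j$ for some $j<i$; one wants to apply the inductive hypothesis, but the subtlety is that $\sigma$ ranges over all of $I^i_\delta$, not just $I^j_\delta$. So the real content—and the main obstacle—is the successor case, which also drives how I'd handle limits. For the successor case $i = j+1$, take $\nu \in I^{j+1}$ and a limit $\delta$. By the inductive hypothesis applied in $I^j$ (after first locating $\nu$ or a suitable representative in $I^j$), obtain $\beta_0 < \delta$ working for the $I^j$-part. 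Then I would argue that the newly added elements $\sigma = \tau^{j+1}$ (copies) can be controlled using the Generator machinery: by Fact 2.11, if $\sigma \in Gen(\tau)$ lies strictly between $\nu$ and $\sigma$, it is itself in $Gen$, and $\nu,\sigma$ share their basic type over $I\setminus Gen(\nu)$, so producing a witness $\sigma'$ amounts to transporting the $I^j$-witness across the copying map, using Fact 2.10 (\ref{element_of_delta}) to keep $\sigma'$ inside the correct representation level $I^{j+1}_\beta$.

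**Key steps in order.** First, handle $\nu \in I^j$ versus $\nu$ being a genuinely new element $\nu = \mu^{j+1}$ with $\mu \in I^j$; in the latter case use that $\nu$ and $\mu$ have the same basic type over $I^j \setminus \{\mu\}$ and $\nu < \mu$. Second, invoke the inductive hypothesis to get $\beta_0$ valid at level $j$. Third, split an arbitrary $\sigma \in I^{j+1}_\delta$ with $\sigma > \nu$ into the case $\sigma \in I^j_\delta$ (handled by $\beta_0$ directly, possibly enlarging $\beta$) and the case $\sigma = \tau^{j+1}$ for some $\tau \in I^j_\delta$. In the new-element case, I'd find the $I^j$-witness for $\tau$ against an appropriate target, then take its copy or the original as $\sigma'$, checking via Fact 2.10 that $\sigma' \in I^{j+1}_\beta$ and via the type-equality in $Gen$ that the order inequalities $\sigma \ge \sigma' \ge \nu$ are preserved. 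Finally, set $\beta = \max$ of the finitely many thresholds produced.

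**The main obstacle.** The delicate point is the interaction between the order relation $\sigma \ge \sigma' \ge \nu$ and the copying operation $\tau \mapsto \tau^{j+1}$: copies are inserted immediately below their originals, so inequalities among originals do not transfer naively to copies. I expect the crux to be a careful case analysis showing that whenever $\sigma > \nu$ with $\sigma$ (or $\nu$) a copy, the Generator facts (Facts 2.9–2.11) let me relate $\sigma,\nu$ to originals in $I^j$ whose $I^j$-witness, or its copy, lands correctly inside $I^{j+1}_\beta$ and respects the two inequalities. Managing this transport cleanly, and simultaneously ensuring the representation-level membership via Fact~\ref{element_of_delta}, is where the proof will require the most attention; the limit case should then follow by the same transport argument applied to the unique $j<i$ with $\sigma \in I^{j+1}_\delta \setminus I^j_\delta$.
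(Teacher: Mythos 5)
Your skeleton (induction on $i$, base case Corollary~\ref{stricti_minor}, case split on whether $\sigma$ or $\nu$ is a newly added copy, transport of witnesses via the type-equalities of the $Gen$ machinery) is the same as the paper's, but you have left the crux unresolved, and the induction hypothesis you propose to use is too weak to resolve it. The paper's key move, absent from your proposal, is to \emph{strengthen} the statement proved by induction: for $\nu\in I^i\setminus I^i_\delta$ one finds $\beta<\delta$ such that every $\sigma\in I^i_\delta$ with $\sigma>\nu$ admits a witness $\sigma'$ lying in $I^0_\beta$ (not merely in $I^i_\beta$) and satisfying the \emph{strict} inequalities $\sigma>\sigma'>\nu$. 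Both strengthenings are exactly what make the transport work. Strictness guarantees $\sigma'\neq\tau$ when you pass to a copy $\sigma=(\tau)^{j+1}$: since $\sigma'$ then lies in $I^j\setminus\{\tau\}$, the type-equality of $\tau$ and $\sigma$ over $I^j\setminus\{\tau\}$ turns $\tau>\sigma'>\nu$ into $\sigma>\sigma'>\nu$. With your non-strict hypothesis the IH may hand you $\sigma'=\tau$ itself, in which case $\sigma\ge\sigma'$ is false (the copy satisfies $\sigma<\tau$); your fallback of ``taking its copy as $\sigma'$'' can be patched to work (the copy $(\tau)^{j+1}$ does land in $I^{j+1}_\beta$ because $\tau\in I^j_\beta$, by the definition of the representation), but you never verify this, and it is precisely the point where an idea is required.

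The more serious problem is your limit case. You propose to treat $\sigma\in I^i_\delta$ by applying the argument ``to the unique $j<i$ with $\sigma\in I^{j+1}_\delta\setminus I^j_\delta$,'' i.e., at the level where $\sigma$ first appears. But the induction hypothesis produces a bound $\beta_j<\delta$ \emph{depending on the level} $j$, and the levels $o(\sigma)$ of elements $\sigma\in I^i_\delta$ above $\nu$ are unbounded in $i$; when, say, $i=\delta$, there is no single $\beta<\delta$ dominating all the $\beta_j$, so this reading of your plan does not produce the required uniform $\beta$. The paper avoids this by applying the induction hypothesis \emph{once}, at level $o(\nu)<i$, and transporting its witnesses up the (finite) road from the last road element $\sigma_0\in I^{o(\nu)}_\delta$ to $\sigma$; this transport is possible only because the witness lies in $I^0_\beta$, hence below every level occurring on that road, so the type-equality of $\sigma_0$ and $\sigma$ over $I^{o(\nu)}\setminus\{\sigma_0\}$ applies to it (and strictness again guarantees the witness is not $\sigma_0$ itself). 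This is the second place where the strengthened hypothesis is indispensable; without it, or without an explicit substitute such as a carefully verified ``witness-or-its-copy'' argument run along the whole road with membership checked via Fact~\ref{element_of_delta}, your induction does not close.
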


\begin{proof}
Notice that if $\nu\in I^{i}_\delta$, then there is $\theta<\delta$ such that $\nu\in I^{i}_\theta$ and the result follows for $\beta=\theta$. So we only have to prove the lemma when $\nu\in I^i\backslash  I^{i}_\delta$. Let us prove something stronger:

\textit{For all $i<\kappa$, $\delta<\kappa$ a limit ordinal, and $\nu\in I^i\backslash I^{i}_\delta$, there is $\beta<\delta$ that satisfies the following:}
\begin{equation}\label{property_HT2}
\forall\sigma\in I^i_\delta\ [\sigma>\nu \Rightarrow \exists\sigma'\in I^0_\beta\ (\sigma>\sigma'>\nu)]
\end{equation}

We will proceed by induction over $i$. The case $i=0$ is precisely Corollary \ref{stricti_minor}. Let us suppose $i<\kappa$ is such that for all limit ordinal $\delta<\kappa$ and $\nu\in I^i\backslash I^{i}_\delta$, there is $\beta<\delta$ that satisfies (\ref{property_HT2}). Let $\delta<\kappa$ be a limit ordinal and $\nu\in I^{i+1}\backslash I^{i+1}_\delta$.
We have two cases, $\nu\in I^i$ and $\nu\in I^{i+1}\backslash I^i$.

{\bf Case $\nu\in I^i$.} By the induction hypothesis, we know that there is $\beta<\delta$ such that (\ref{property_HT2}) holds.
Let us prove that this $\beta$ is the one we are looking for. Let $\sigma\in I^{i+1}_\delta$ be such that $\sigma>\nu$.
The subcase $\sigma\in I^i_\delta$ follows from the way $\beta$ was chosen. 

{\bf Subcase $\sigma\in I^{i+1}_\delta\backslash I^i_\delta$.} By the construction of $I^{i+1}$, there is $\sigma_0\in I^i_\delta$ such that $\sigma=(\sigma_0)^{i+1}$ (so $\sigma_0>\sigma$). Thus $\sigma_0>\sigma>\nu$, and by the way $\beta$ was chosen, there is $\sigma'\in I^0_\beta$ such that $\sigma_0> \sigma'> \nu$. Since $\sigma_0$ and $\sigma$ have the same type of basic formulas over $I^i\backslash \{\sigma_0\}$, $\sigma> \sigma'> \nu$ as we wanted.

{\bf Case $\nu\in I^{i+1}\backslash I^i$.} By the  construction of $I^{i+1}$, there is $\nu_0\in I^i$ such that $(\nu_0)^{i+1}=\nu$. Since $\nu\in Gen(\nu_0)$ and $\nu\in I^{i+1}\backslash I^{i+1}_\delta$, by Fact \ref{element_of_delta} $\nu_0\in I^i\backslash I^i_\delta$. Thus, by the previous case, there is $\beta<\delta$ such that  for all $\sigma\in I^{i+1}_\delta$:
$$\sigma>\nu_0 \Rightarrow \exists\sigma'\in I^0_\beta\ (\sigma>\sigma'>\nu_0).$$
Let us show that this $\beta$ is as wanted.
\begin{claim}
If $\sigma\in I^{i+1}_\delta$ is such that $\sigma>\nu$, then $\sigma>\nu_0$.
\end{claim}
\begin{proof}
Let us suppose, towards contradiction, that there is $\sigma\in I^{i+1}_\delta$ such that $\nu_0>\sigma>\nu$. Since $\nu_0$ and $\nu$ have the same type of basic formulas over $I^i\backslash \{\nu_0\}$, $\sigma\in I^{i+1}_\delta\backslash I^i$. Therefore, there is $\sigma_0\in I^i$ such that $(\sigma_0)^{i+1}=\sigma$. Since $\sigma\in Gen(\sigma_0)$ and $\sigma\in I^{i+1}_\delta$, $\sigma_0\in I^i_\delta$. We conclude that $\sigma_0\neq \nu_0$. Finally, $\sigma_0$ and $\sigma$ have the same type of basic formulas over $I^i\backslash \{\sigma_0\}$, which implies $\nu_0>\sigma_0>\sigma>\nu$. This contradicts the fact that $\nu_0$ and $\nu$ have the same type of basic formulas over $I^i\backslash \{\nu_0\}$.
\end{proof}
From the previous claim, we know that for all $\sigma\in I^{i+1}_\delta$, $\sigma>\nu$ implies $\sigma>\nu_0$. By the way $\beta$ was chosen we conclude that for all $\sigma\in I^{i+1}_\delta$, $\sigma>\nu$ implies the existence of $\sigma'\in I^0_\beta$ such that $\sigma>\sigma'>\nu_0>\nu$, as we wanted.

Let us proceed with the limit case. Suppose $i<\kappa$ is a limit ordinal such that for all $j<i$, for all limit ordinal $\delta<\kappa$, and $\nu\in I^j\backslash I^j_\delta$,  there is $\beta<\delta$ such that (\ref{property_HT2}) holds for $j$.
Let $\delta<\kappa$ be a limit ordinal and $\nu\in I^i\backslash I^i_\delta$. Since $i$ is a limit, $o(\nu)<i$, by the induction hypothesis, there is $\beta$ such that (\ref{property_HT2}) holds for $o(\nu)$. 
\begin{claim}
$\beta$ is as wanted.
\end{claim}
\begin{proof}
Let $\sigma\in I^i_\delta$ be such that $\sigma>\nu$.

{\bf Case $\sigma\in I^{o(\nu)}_\delta$.} This case follows from the way $\beta$ was chosen.

{\bf Case $\sigma\in I^i_\delta\backslash I^{o(\nu)}_\delta$.} There is $\sigma_0\in I^{o(\nu)}_\delta$ such that $\sigma\in Gen(\sigma_0)$, with road to $\sigma$ equal to $\{\sigma_i\}_{i\leq n}$ such that $\sigma_1\notin I^{o(\nu)}$. Therefore $\sigma_0$ and $\sigma$ have the same type of basic formulas over $I^\gamma\backslash \{\sigma_0\}$, where $o(\sigma_1)=\gamma+1$. In particular $\sigma_0$ and $\sigma$ have the same type of basic formulas over $I^{o(\nu)}\backslash \{\sigma_0\}$. By the way $\beta$ was chosen, there is $\sigma'\in I^0_\beta\subseteq I^{o(\nu)}_\beta$ such that $\sigma_0>\sigma'>\nu$. Since $\sigma_0$ and $\sigma$ have the same type of basic formulas over $I^{o(\nu)}\backslash \{\sigma_0\}$,   $\sigma>\sigma'>\nu$ as wanted.
\end{proof}
\end{proof}

As it can be seen in the proof of the previous lemma, the witness $\sigma'$ can be chosen in $I^0_\beta$ when $\nu\notin I^i_\delta$.

\begin{cor}\label{strict_minor_in0}
For all $i<\kappa$, $\delta<\kappa$ a limit ordinal, and $\nu\in I^i$, if $\nu\notin I^i_\delta$ there is $\beta<\delta$ which satisfies the following:
$$\forall\sigma\in I^i_\delta [\sigma>\nu \Rightarrow \exists\sigma'\in I^0_\beta\ (\sigma>\sigma'>\nu)]$$
\end{cor}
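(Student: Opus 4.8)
Let me understand what this corollary claims and how it relates to the lemma just proven.\emph{The plan is to} read this off directly from the proof of Lemma~\ref{nice_for_i} rather than reproving anything from scratch. Indeed, the lemma as stated only guarantees a witness $\sigma'\in I^i_\beta$ with the weak inequalities $\sigma\ge\sigma'\ge\nu$, but the corollary asks for a witness lying in the smaller set $I^0_\beta$ and satisfying the \emph{strict} inequalities $\sigma>\sigma'>\nu$, under the extra hypothesis $\nu\notin I^i_\delta$. Both of these strengthenings are exactly what the author flagged in the remark immediately preceding the corollary (``the witness $\sigma'$ can be chosen in $I^0_\beta$ when $\nu\notin I^i_\delta$''), so the corollary is really a restatement of the stronger intermediate claim~(\ref{property_HT2}) that was proved en route to Lemma~\ref{nice_for_i}.

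\emph{First I would} observe that the hypothesis $\nu\notin I^i_\delta$ places us precisely in the case the lemma's proof treats by establishing~(\ref{property_HT2}): recall that the proof splits into the subcase $\nu\in I^i_\delta$ (handled by picking $\beta=\theta$ with $\nu\in I^i_\theta$) and the subcase $\nu\in I^i\setminus I^i_\delta$, and it is only in the latter subcase that the stronger conclusion~(\ref{property_HT2}) with $\sigma'\in I^0_\beta$ and strict inequalities is derived by induction on $i$. \emph{Thus I would} simply invoke~(\ref{property_HT2}): given $\nu\in I^i$ with $\nu\notin I^i_\delta$, there is $\beta<\delta$ such that for every $\sigma\in I^i_\delta$ with $\sigma>\nu$ there exists $\sigma'\in I^0_\beta$ with $\sigma>\sigma'>\nu$. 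This is verbatim the conclusion required by the corollary.

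\emph{The only point needing a word of care} is that the quantifier structure matches: the corollary fixes $i$, $\delta$, and $\nu$ (with $\nu\notin I^i_\delta$) and then asserts the existence of a single $\beta$ working uniformly for all $\sigma\in I^i_\delta$, which is precisely the form of~(\ref{property_HT2}). Since~(\ref{property_HT2}) was established by the full induction on $i$ (base case $i=0$ from Corollary~\ref{stricti_minor}, successor case, and limit case) carried out inside the proof of Lemma~\ref{nice_for_i}, nothing further is required. \emph{I anticipate no real obstacle here}; the corollary is a bookkeeping consequence isolating the strong form of the inductive statement from the weaker form that was advertised in the lemma's header. The proof is therefore a single sentence: apply~(\ref{property_HT2}) from the proof of Lemma~\ref{nice_for_i} to the fixed $i$, $\delta$, and $\nu\notin I^i_\delta$, and take the $\beta<\delta$ it provides.
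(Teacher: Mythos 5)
Your proposal is correct and matches the paper exactly: the paper offers no separate argument for Corollary~\ref{strict_minor_in0}, but derives it (via the remark preceding it) from the strengthened inductive statement~(\ref{property_HT2}) established inside the proof of Lemma~\ref{nice_for_i}, which is precisely what you do. Your observation that the hypothesis $\nu\notin I^i_\delta$ places one exactly in the subcase where~(\ref{property_HT2}) applies, with matching quantifier structure, is the whole content of the paper's implicit proof.
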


\begin{lemma}\label{nice_big_I}
For all $\delta<\kappa$ limit, and $\nu\in I$, there is $\beta<\delta$ that satisfies the following:
\begin{equation}\label{property_HT3}
\forall\sigma\in I_\delta\ [\sigma>\nu \Rightarrow \exists\sigma'\in I_\beta\ (\sigma\ge\sigma'\ge\nu)]
\end{equation}
\end{lemma}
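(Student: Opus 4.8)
\emph{The plan} is to reduce the statement about the full order $I$ to Corollary~\ref{strict_minor_in0} applied at a single level $I^\delta$, exploiting that the representation $\langle I_\alpha\mid\alpha<\kappa\rangle$ is built diagonally from the $\langle I^j_\alpha\rangle$, so that $I_\delta=I^\delta_\delta$ while $I^0_\beta\subseteq I^\beta_\beta=I_\beta$ (the latter because the $I^j_\beta$ increase with $j$). The case $\nu\in I_\delta$ is immediate: since $\delta$ is limit and the representation is continuous, $\nu\in I_\beta$ for some $\beta<\delta$, and then $\sigma'=\nu$ witnesses the conclusion. So the real work is the case $\nu\notin I_\delta$, and in particular the subcase $o(\nu)>\delta$, where $\nu\notin I^\delta$ and Corollary~\ref{strict_minor_in0} cannot be invoked for $\nu$ directly.

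First I would replace $\nu$ by a suitable ancestor of bounded order. Taking the root of $\nu$ in $I^0$ and the road from it to $\nu$, let $\sigma_k$ be the last element on that road with $o(\sigma_k)\le\delta$ (so $\sigma_k=\nu$ exactly when $o(\nu)\le\delta$). Then $\sigma_k\in I^\delta$, and by the road decomposition together with Fact~\ref{Same_type_in_Gen}, $\sigma_k\ge\nu$ and $\sigma_k,\nu$ have the same basic type over $I^\gamma\setminus\{\sigma_k\}$, where $o(\sigma_{k+1})=\gamma+1>\delta$, hence over $I^\delta\setminus\{\sigma_k\}$. Since the order relation is a basic formula, this yields the crucial transfer: for every $\sigma\in I_\delta=I^\delta_\delta$ with $\sigma\neq\sigma_k$ we have $\sigma>\nu\iff\sigma>\sigma_k$.

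Next I would split on whether $\sigma_k\in I_\delta$. If $\sigma_k\in I_\delta$, then by continuity of the representation I choose $\beta<\delta$ with $\sigma_k\in I_\beta$ and set $\sigma'=\sigma_k$ for every relevant $\sigma$: by the transfer, any $\sigma\in I_\delta$ with $\sigma>\nu$ satisfies $\sigma\ge\sigma_k\ge\nu$, as required. If instead $\sigma_k\notin I_\delta=I^\delta_\delta$, then $\sigma_k\in I^\delta\setminus I^\delta_\delta$, so Corollary~\ref{strict_minor_in0} applied at tower level $\delta$ with the limit $\delta$ and the element $\sigma_k$ gives $\beta<\delta$ with $\forall\sigma\in I^\delta_\delta\,[\sigma>\sigma_k\Rightarrow\exists\sigma'\in I^0_\beta\,(\sigma>\sigma'>\sigma_k)]$; for $\sigma\in I_\delta$ with $\sigma>\nu$ we have $\sigma\neq\sigma_k$, hence $\sigma>\sigma_k$, and the resulting $\sigma'\in I^0_\beta\subseteq I_\beta$ satisfies $\sigma>\sigma'>\sigma_k\ge\nu$.

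The hard part is precisely the fabrication of $\sigma_k$ and the verification of the type transfer when $o(\nu)>\delta$: this is where the generator/road machinery (the road decomposition, Fact~\ref{Same_type_in_Gen}, and the behaviour of orders along a road via Fact~\ref{element_of_delta}) must be combined with the diagonal structure $I_\delta=I^\delta_\delta$ and the inclusion $I^0_\beta\subseteq I_\beta$. Once the transfer is in place, both branches of the final case split are short, as the witnesses come either from $\sigma_k$ itself or from Corollary~\ref{strict_minor_in0} at the fixed level $\delta$.
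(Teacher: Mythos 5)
Your proof is correct and takes essentially the same route as the paper: the paper's three cases ($\nu\in I_\delta$, $\nu\in I^{o(\nu)}_\delta\setminus I_\delta$, and $\nu\notin I^{o(\nu)}_\delta$) match your case A and subcases B1/B2 exactly (your $\sigma_k$ is the paper's $\nu_0$, and by Fact~\ref{element_of_delta} your split on $\sigma_k\in I_\delta$ is equivalent to the paper's split on $\nu\in I^{o(\nu)}_\delta$), with the same ingredients throughout: the road/type-transfer observation, Corollary~\ref{strict_minor_in0}, and $I^0_\beta\subseteq I^\beta_\beta=I_\beta$. The only cosmetic difference is in the final case, where the paper applies Corollary~\ref{strict_minor_in0} directly to $\nu$ at tower level $\theta=\max\{o(\nu),\delta\}$, while you apply it to $\sigma_k$ at level $\delta$ and then transfer; both are valid.
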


\begin{proof}
Let $\delta<\kappa$ be a limit ordinal, and $\nu\in I$.
We have three different cases: $\nu\in I_\delta$, $\nu\in I^{o(\nu)}_\delta \backslash I_\delta$, and $\nu\notin I^{o(\nu)}_\delta$.

{\bf Case $\nu\in I_\delta$.} Since $\delta$ is a limit, $o(\nu)<\delta$ and there is $\theta<\delta$ such that $\nu\in I^{o(\nu)}_\theta$. Let $\beta=max\{o(\nu),\theta\}$, it is clear that $\beta$ is as wanted.

{\bf Case $\nu\in I^{o(\nu)}_\delta \backslash I_\delta$.} Recall $I_\delta=I^\delta_\delta$, clearly $\delta<o(\nu)$. There is $\nu_0\in I_\delta$, such that $\nu\in Gen(\nu_0)$, with road to $\nu$ equal to $\{\nu_i\}_{i\leq n}$, and $\nu_1\notin I^\delta$. Since $\nu_0\in I^\delta_\delta$ and $\delta$ is a limit, $o(\nu_0)<\delta$ and there is $\theta<\delta$ such that $\nu_0\in I^{o(\nu_0)}_\theta$. Let $\beta=max\{o(\nu_0),\theta\}$.

\begin{claim}
$\beta$ is as wanted.
\end{claim}
\begin{proof}
Let $\sigma\in I^\delta_\delta$ be such that $\sigma>\nu$. Since $\nu_1\notin I^\delta$, $o(\nu_1)=\gamma+1>\delta$, and $\nu_0$ and $\nu$ have the same type of basic formulas over $I^\gamma\backslash\{\nu_0\}$. In particular $\nu_0$ and $\nu$ have the same type of basic formulas over $I^\delta\backslash \{\nu_0\}$, so $\sigma>\nu_0>\nu$. Since $\nu_0\in I^\beta_\beta$, $\sigma'=\nu_0$ is as wanted.
\end{proof}

{\bf Case $\nu\notin I^{o(\nu)}_\delta$.}  Let $\theta=max\{o(\nu), \delta\}$, thus $\nu\in I^\theta$ (notice that we are talking about the order $I^\theta$ and not the element $I_\theta$ of the $\kappa$-representation $\langle I_\alpha\mid \alpha<\kappa\rangle$) and by Corollary \ref{strict_minor_in0} there is $\beta<\delta$ which satisfies the following:
$$\forall\sigma\in I^\theta_\delta [\sigma>\nu \Rightarrow \exists\sigma'\in I^0_\beta\ (\sigma>\sigma'>\nu)].$$

\begin{claim}
$\beta$ is as wanted.
\end{claim}
\begin{proof}
Let $\sigma\in I^\delta_\delta$ be such that $\sigma>\nu$. Since $\delta\leq\theta$, $\sigma\in I^\theta_\delta$. Therefore, there is $\sigma'\in I^0_\beta$ such that $\sigma>\sigma'>\nu$. The proof follows from $I^0_\beta\subseteq I^\beta_\beta=I_\beta$.
\end{proof}
\end{proof}

\begin{fact}[Hyttinen-Tuuri, \cite{HT} Lemma 8.12]
Let $A$ be a linear order of size $\kappa$ and $\langle A_\alpha\mid\alpha<\kappa\rangle$ a $\kappa$-representation. Then the following are equivalent:
\begin{enumerate}
\item $A$ is $(\kappa, bs,bs)$-nice.
\item There is a club $C\subseteq \kappa$, such that for all limit $\delta\in C$, for all $x\in A$ there is $\beta<\delta$ such that one of the following holds:
\begin{itemize}
\item $\forall\sigma\in A_\delta [\sigma\ge x \Rightarrow \exists\sigma'\in A_\beta\ (\sigma\ge \sigma'\ge x)]$
\item $\forall\sigma\in A_\delta [\sigma\leq x \Rightarrow \exists\sigma'\in A_\beta\ (\sigma\leq \sigma'\leq x)]$
\end{itemize}
\end{enumerate}
\end{fact}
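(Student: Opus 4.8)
The plan is to reduce both conditions to a single pointwise statement about cuts of the linear order and then to observe that each side is merely a restatement of ``$Sp_{bs}(\mathbb{A})$ is non-stationary''. First I would unpack what (bs,bs)-splitting means for the linear order $A$. Since the language of $A$ consists only of $<$ and $=$, for $a\in A$ and $B\subseteq A$ the type $tp_{bs}(a,B,A)$ records exactly the cut that $a$ makes in $B$. Hence, for $b_1,b_2\in A_\delta$ with $tp_{bs}(b_1,A_\beta,A)=tp_{bs}(b_2,A_\beta,A)$ (the same cut in $A_\beta$), the pair-types $tp_{bs}(a^\frown b_1,A_\beta,A)$ and $tp_{bs}(a^\frown b_2,A_\beta,A)$ differ if and only if $a$ orders itself differently against $b_1$ and against $b_2$, i.e.\ $a$ lies strictly between them. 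Consequently $tp_{bs}(x,A_\delta,A)$ (bs,bs)-splits over $A_\beta$ if and only if there are $\tau,\sigma\in A_\delta$ with $\tau<x<\sigma$ and $A_\beta\cap[\tau,\sigma]=\emptyset$, that is, an interval of $A_\delta$ around $x$ free of parameters from $A_\beta$. Note that if $x\in A_\delta$ then $x\in A_\beta$ for all large $\beta<\delta$ by continuity of the representation and no splitting occurs, so only $x\notin A_\delta$ is relevant.

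The core of the argument is the pointwise equivalence, for a limit $\delta$, an $x\in A\setminus A_\delta$, and $\beta<\delta$: the type $tp_{bs}(x,A_\delta,A)$ does \emph{not} (bs,bs)-split over $A_\beta$ if and only if the ``from above'' clause or the ``from below'' clause of (2) holds at $\beta$. For the backward direction I would show that each order clause forbids a splitting witness: if, say, the from-above clause holds and $\tau<x<\sigma$ lie in $A_\delta$, then applying the clause to $\sigma$ produces $\sigma'\in A_\beta$ with $x\leq\sigma'\leq\sigma$, whence $\tau<\sigma'\leq\sigma$ separates $\tau$ and $\sigma$ over $A_\beta$, so no parameter-free interval around $x$ can exist. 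For the forward direction I would argue by contrapositive: if both clauses fail at $\beta$, there are $\sigma_0\geq x$ with $A_\beta\cap[x,\sigma_0]=\emptyset$ and $\tau_0\leq x$ with $A_\beta\cap[\tau_0,x]=\emptyset$; gluing the two intervals yields $\tau_0<x<\sigma_0$ with $A_\beta\cap[\tau_0,\sigma_0]=\emptyset$, i.e.\ a splitting.

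With the pointwise equivalence in hand the lemma becomes bookkeeping. Unwinding the definition of $Sp_{bs}(\mathbb{A})$, a limit $\delta$ lies in $Sp_{bs}(\mathbb{A})$ iff some $x$ splits over $A_\beta$ for every $\beta<\delta$, equivalently iff some $x$ fails both order clauses at every $\beta<\delta$. Hence $\delta\notin Sp_{bs}(\mathbb{A})$ iff every $x\in A$ satisfies one of the two clauses at some $\beta<\delta$, which is exactly the condition quantified in (2). Since $Sp_{bs}(\mathbb{A})$ consists only of limit ordinals, the statement $Sp_{bs}(\mathbb{A})=^2_{CUB}\emptyset$ means precisely that some club $C$ is disjoint from $Sp_{bs}(\mathbb{A})$; for such a $C$ condition (2) then holds, giving (1)$\Rightarrow$(2). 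Conversely, if $C$ witnesses (2) then every limit $\delta\in C$ lies outside $Sp_{bs}(\mathbb{A})$, so the club of limit points of $C$ avoids $Sp_{bs}(\mathbb{A})$, giving (2)$\Rightarrow$(1).

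The main obstacle I anticipate is the faithful translation carried out in the first two steps: verifying that (bs,bs)-splitting for the linear order amounts exactly to ``a parameter-free interval around $x$'', reconciling the non-strict inequalities $\geq,\leq$ in the two clauses of (2) with the strict separation $\tau<x<\sigma$, and disposing cleanly of the boundary cases where $x$ equals one of the $b_i$ or $x\in A_\delta$. Once these order-theoretic reformulations are pinned down, both implications between (1) and (2) are routine.
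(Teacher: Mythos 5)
The first thing to note is that the paper contains no proof of this statement: it is imported as a Fact from Hyttinen--Tuuri \cite{HT} (Lemma 8.12), so there is no in-paper argument to compare yours against; I can only judge your proof on its own merits, and it is correct. The heart of it --- that for a linear order, $tp_{bs}(x,A_\delta,A)$ (bs,bs)-splitting over $A_\beta$ is equivalent to the existence of $\tau,\sigma\in A_\delta$ with $\tau\leq x\leq\sigma$, $\tau<\sigma$, and $[\tau,\sigma]\cap A_\beta=\emptyset$ --- is the right translation: two elements of $A_\delta$ have the same bs-type over $A_\beta$ exactly when no element of $A_\beta$ lies weakly between them, and two pair-types over the same parameters can then differ only in the quantifier-free relation between the two coordinates. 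Your two implications (a clause at $\beta$ produces a parameter inside any would-be splitting interval, a contradiction; failure of both clauses glues two parameter-free half-intervals into a splitting pair) are sound, and the club bookkeeping is exactly what the definitions of $(\kappa,bs,bs)$-nice and $=^2_{CUB}$ demand, using that $Sp_{bs}(\mathbb{A})$ consists only of limit ordinals (your detour through the limit points of $C$ in the direction (2)$\Rightarrow$(1) is harmless but not even needed, since any club disjoint from the limit ordinals of $Sp_{bs}(\mathbb{A})$ is already disjoint from $Sp_{bs}(\mathbb{A})$). The one case you defer to the end should be made explicit, but it is one line: if $x\in A_\delta$, pick $\beta<\delta$ with $x\in A_\beta$, which exists since $\delta$ is a limit and the representation is increasing and continuous; then $\sigma'=x$ witnesses both clauses simultaneously, and splitting over such $A_\beta$ is impossible, so these $x$ are harmless on both sides of the equivalence.
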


From Lemma \ref{nice_big_I} it follows the next corollary. 

\begin{cor}\label{I_nice}
$I$ is $(\kappa, bs,bs)$-nice.
\end{cor}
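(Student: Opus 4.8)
The plan is to derive Corollary \ref{I_nice} by verifying condition (2) of the Hyttinen--Tuuri characterization (the Fact immediately preceding the corollary) for the order $I$ together with its $\kappa$-representation $\langle I_\alpha\mid\alpha<\kappa\rangle$. Since we already have Lemma \ref{nice_big_I}, which supplies exactly the first bullet of condition (2), the work is essentially done: for every limit $\delta<\kappa$ and every $x\in I$, Lemma \ref{nice_big_I} produces a $\beta<\delta$ with
\[
\forall\sigma\in I_\delta\ [\sigma>x \Rightarrow \exists\sigma'\in I_\beta\ (\sigma\ge\sigma'\ge x)].
\]
The only gap between this and the stated bullet is that the Fact uses $\ge x$ on the left while Lemma \ref{nice_big_I} uses $>x$; I would close this by noting that if $\sigma=x$ then $\sigma'=x\in I_\delta$ already works (choosing $\beta$ large enough that $x\in I_\beta$ when $x\in I_\delta$, which is automatic since $\delta$ is a limit and the representation is continuous), so the witness exists in the boundary case as well.

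First I would fix the club $C$: take $C=\kappa$ (or the set of limit ordinals below $\kappa$), since Lemma \ref{nice_big_I} holds for \emph{every} limit $\delta<\kappa$ with no further restriction, so no thinning of $\kappa$ into a smaller club is needed. Next, for a fixed limit $\delta\in C$ and a fixed $x\in I$, I would invoke Lemma \ref{nice_big_I} to obtain the required $\beta<\delta$, and then verify the first bullet of condition (2) directly: given $\sigma\in I_\delta$ with $\sigma\ge x$, either $\sigma>x$, in which case Lemma \ref{nice_big_I} hands us $\sigma'\in I_\beta$ with $\sigma\ge\sigma'\ge x$, or $\sigma=x$, in which case $x\in I_\delta$ forces $x\in I_\theta$ for some $\theta<\delta$ (by continuity of the representation at the limit $\delta$), and enlarging $\beta$ to $\max\{\beta,\theta\}$ lets us take $\sigma'=x$. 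This establishes the first of the two disjuncts in condition (2), which is all that is required for that condition to hold.

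Having verified condition (2), the equivalence stated in the Hyttinen--Tuuri Fact immediately yields that $I$ is $(\kappa,bs,bs)$-nice, completing the corollary. The argument is short precisely because Lemma \ref{nice_big_I} was engineered to match the combinatorial content of niceness; the genuine difficulty was already dispatched there, in the three-case analysis tracking how an element and its generators sit inside the representation $\langle I_\alpha\mid\alpha<\kappa\rangle$. I expect the only point needing a word of care is the passage from the strict inequality $\sigma>\nu$ of Lemma \ref{nice_big_I} to the non-strict $\sigma\ge x$ of the Fact, handled as above; everything else is a direct citation.
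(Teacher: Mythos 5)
Your proposal is correct and matches the paper exactly: the paper derives Corollary \ref{I_nice} immediately from Lemma \ref{nice_big_I} via the Hyttinen--Tuuri characterization, with no further argument given. Your handling of the boundary case $\sigma=x$ (using continuity of the representation at the limit $\delta$ to enlarge $\beta$) is precisely the small detail the paper leaves implicit, and it is handled correctly.
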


Notice that if $\kappa$ is inaccessible, $I$ is $(<\kappa, bs)$-stable. This can be generalize to $\kappa$ successors.

\begin{lemma}\label{stable_I0_first_lemma}
Suppose $\kappa=\lambda^+$.
$I^0$ is $(<\kappa, bs)$-stable.
\end{lemma}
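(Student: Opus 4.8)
The plan is to use the fact that in a pure linear order the basic type $tp_{bs}(a,B,I^0)$ of a point $a$ over a parameter set $B$ is completely determined by the cut that $a$ makes in $(B,<)$ together with the datum of whether $a$ equals some $b\in B$. Since there are at most $|B|$ equality types, it suffices to bound the number of \emph{cuts} of $B$ realized by elements of $I^0$. I would fix $B\s I^0$ with $|B|<\kappa$, i.e. $|B|\leq\lambda$, and record the arithmetic fact that drives the whole argument: the standing assumption $\gamma^\omega<\kappa$ for $\gamma<\kappa$, applied to $\gamma=\lambda$, forces $\lambda^\omega\leq\lambda$, hence $\lambda^\omega=\lambda$. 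The goal is then to show that at most $\lambda<\kappa$ cuts are realized.

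First I would form the tree of initial segments $T_B=\{b\restriction n\mid b\in B,\ n<\omega\}\s(\kappa\times\mathbb{Q})^{<\omega}$, which is closed under initial segments and has size $|T_B|\leq\lambda$. For $a\in I^0$ set $n^*(a)=\sup\{n\mid a\restriction n\in T_B\}$; by closure under initial segments the set in question is an initial segment of $\omega$, so either $n^*(a)=\omega$ or $a\restriction n^*(a)\in T_B$ while $a\restriction(n^*(a)+1)\notin T_B$. In the first case $a\restriction n\in T_B$ for all $n$, and the map $a\mapsto(a\restriction n)_{n<\omega}\in(T_B)^\omega$ is injective, so there are at most $|T_B|^\omega\leq\lambda^\omega=\lambda$ such elements, and hence at most $\lambda$ cuts arising this way.

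If instead $n^*:=n^*(a)<\omega$, put $s=a\restriction n^*\in T_B$; then $a(n^*)\neq b(n^*)$ for every $b\in B$ with $b\restriction n^*=s$. The key check is that the cut of $a$ in $B$ is determined by the pair consisting of $s$ and of the cut that the single point $a(n^*)$ makes in $D_s=\{b(n^*)\mid b\in B,\ b\restriction n^*=s\}\s\kappa\times\mathbb{Q}$: for $b\in B$ with $b\restriction n^*\neq s$ the comparison of $a$ with $b$ is read off from $s$ and $b$ alone (they diverge before $n^*$), while for $b$ extending $s$ it is read off from comparing $a(n^*)$ with $b(n^*)$. Consequently the number of cuts from this case is at most $|T_B|$ times the supremum over $s\in T_B$ of the number of cuts of $D_s$ realized in $\kappa\times\mathbb{Q}$.

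The remaining point is to bound, for any $D\s\kappa\times\mathbb{Q}$ with $|D|\leq\lambda$, the number of cuts realized by points of $\kappa\times\mathbb{Q}$ by $\leq\lambda$. Writing $A=\{\alpha\mid\exists q,\ (\alpha,q)\in D\}$, a point $(\gamma,r)$ compares with $(\alpha,q)\in D$ through $\gamma$ versus $\alpha$, and through $r$ versus $q$ only when $\gamma=\alpha$; thus its cut is determined by the initial segment $A\cap\gamma$ together with, when $\gamma\in A$, the cut of $r$ in the fibre $\{q\mid(\gamma,q)\in D\}\s\mathbb{Q}$. Since $A$ is a set of ordinals there are at most $|A|+1\leq\lambda$ such initial segments, and since $\mathbb{Q}$ is countable each fibre admits at most $\omega$ cuts realized by rationals; summing over the $\leq\lambda$ fibres gives $\leq\lambda$ cuts. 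Adding the three contributions (equality, the branch case, and the $D_s$ case) bounds the number of realized basic types by $\lambda<\kappa$, as required. I expect the only delicate step to be the branch count, which is precisely where the hypothesis $\kappa=\lambda^+$ enters through $\lambda^\omega=\lambda$; the rest is bookkeeping about lexicographic comparisons.
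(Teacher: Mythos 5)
Your proof is correct, but it takes a genuinely different route from the paper's. The paper's argument is a short reduction to a cited result: it projects the parameter set $A$ onto $Pr(A)=\{f_1\mid f\in A\}\subseteq\mathcal{R}$, invokes Fact~\ref{HT-stable} (Hyttinen--Tuuri: $\mathcal{R}$ is $(<\kappa,bs)$-stable), and asserts that passing back from $\mathcal{R}$ to $I^0$ costs at most a multiplicative factor $2^\omega\le\lambda^\omega=\lambda$ because $|\mathbb{Q}|=\omega$. You instead count types from scratch: a basic type in a pure linear order is a cut plus equality data, and you stratify the realizing elements by how far they follow the tree $T_B$ of initial segments, bounding the branch case by $|T_B|^\omega\le\lambda^\omega=\lambda$ and the divergence case by pairs consisting of a node $s$ and a realized cut of $D_s\subseteq\kappa\times\mathbb{Q}$, the latter contributing at most $\lambda$ since a set of ordinals of size $\le\lambda$ has at most $\lambda+1$ initial segments and each $\mathbb{Q}$-fibre admits only countably many realized cuts. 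What the paper's approach buys is brevity; what it costs is that the key inequality $|\{tp_{bs}(a,A,I^0)\mid a\in I^0\}|\le|\{tp_{bs}(a,Pr(A),\mathcal{R})\mid a\in\mathcal{R}\}\times 2^\omega|$ is asserted rather than proved, and the naive justification of it --- that the $I^0$-type of $a=(a_1,a_2)$ over $A$ is determined by the $\mathcal{R}$-type of $a_1$ over $Pr(A)$ together with the $\mathbb{Q}$-part $a_2$ --- is in fact false: over $A=\{b\}$ with $b_1=(1,1,0,0,\ldots)$ and $b_2=(1,0,0,\ldots)$, the elements with first parts $(1,2,0,\ldots)$ and $(2,0,0,\ldots)$ and the same $\mathbb{Q}$-part $(0,0,\ldots)$ have equal $\mathcal{R}$-types over $\{b_1\}$ yet lie on opposite sides of $b$, because a cut in $\mathcal{R}$ forgets the level at which the first difference occurs. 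A decomposition of exactly your kind, which tracks that divergence level, is what is needed to substantiate the paper's inequality; so your argument is not only correct but self-contained (it never uses Fact~\ref{HT-stable}, in effect reproving the relevant portion of it), it isolates the single point where $\lambda^\omega=\lambda$ is used (the branch count), and it generalizes verbatim to the analogous order built over any well-order times a countable linear order.
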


\begin{proof}
Recall the linear order $\mathcal{R}$ from Definition \ref{HT-order}. From the general assumption on $\kappa$, we know that $\lambda^\omega=\lambda$.

For all $A\subseteq I^0$ define $Pr(A)$ as the set $\{f_1\mid f\in A\}$. Let $A\subseteq I^0$ be such that $|A|<\kappa$. 
Since $|\mathbb{Q}|=\omega$, $|\{tp_{bs}(a,A,I^0)\mid a\in I^0\}|\leq |\{tp_{bs}(a,Pr(A),\mathcal{R})\mid a\in \mathcal{R}\}\times 2^\omega|$. 
By Fact \ref{HT-stable} and since $\lambda^\omega=\lambda$, $|\{tp_{bs}(a,A,I^0)\mid a\in I\}|< \kappa$.
\end{proof}

\begin{lemma}\label{I_stable}
Suppose $\kappa=\lambda^+$.
$I$ is $(<\kappa, bs)$-stable.
\end{lemma}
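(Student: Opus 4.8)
The plan is to fix a set $B\subseteq I$ with $|B|<\kappa$ and to bound the number of basic types $tp_{bs}(a,B,I)$ realized by $a\in I$, reducing everything to the stability of $I^0$ (Lemma \ref{stable_I0_first_lemma}) together with the internal structure of the generators of Definition \ref{Generator}. First I would attach to every $a\in I$ its unique generator $\mu(a)\in I^0$, i.e.\ the element of $I^0$ with $a\in Gen(\mu(a))$; since the sets $Gen(\nu)$ with $\nu\in I^0$ are pairwise disjoint and cover $I$, this is well defined. Put $B_0=\{\mu(b)\mid b\in B\}\subseteq I^0$ and $B_\mu=B\cap Gen(\mu)$ for $\mu\in B_0$, so that $|B_0|<\kappa$. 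The crucial observation is that \emph{cross-block comparisons collapse to $I^0$}: if $\mu(a)\neq\mu(b)$ then, applying the conclusion drawn after Fact \ref{Same_type_in_Gen} twice — once to $b$ versus $\mu(b)$ with parameter $a\notin Gen(\mu(b))$, once to $a$ versus $\mu(a)$ with parameter $\mu(b)\notin Gen(\mu(a))$ — one gets that $a$ and $b$ compare exactly as $\mu(a)$ and $\mu(b)$ do in $I^0$. Consequently $tp_{bs}(a,B,I)$ is completely determined by the pair
\[
\bigl(\,tp_{bs}(\mu(a),B_0,I^0)\,,\ tp_{bs}(a,B_{\mu(a)},Gen(\mu(a)))\,\bigr),
\]
the first coordinate governing all $b$ with $\mu(b)\neq\mu(a)$ and the second all $b\in Gen(\mu(a))$.

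Granting this, the count is routine. For $a$ whose generator lies outside $B_0$ the second coordinate is trivial ($B_{\mu(a)}=\emptyset$), so its type is pinned down by $tp_{bs}(\mu(a),B_0,I^0)$, and by Lemma \ref{stable_I0_first_lemma} there are fewer than $\kappa$ such types. For a fixed $\mu\in B_0$, the types realized by $a\in Gen(\mu)$ number at most $|\{tp_{bs}(a,B_\mu,Gen(\mu))\mid a\in Gen(\mu)\}|$, the first coordinate being now constant. Since $|B_0|<\kappa$ and $\kappa$ is regular, summing over $\mu\in B_0$ keeps the total below $\kappa$, \emph{provided each $Gen(\mu)$ is itself $(<\kappa,bs)$-stable}. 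Thus the whole lemma reduces to this last claim.

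The main obstacle, and the heart of the argument, is therefore to show that each linear order $Gen(\mu)$ is $(<\kappa,bs)$-stable. Here I would exploit the road structure: every $\sigma\in Gen(\mu)$ is coded by the strictly increasing finite sequence $\langle o(\sigma_1),\dots,o(\sigma_n)\rangle$ of creation stages along its road from $\mu$, and one checks from the construction (\ref{generated_element}) that the order of $Gen(\mu)$ is the induced Kleene--Brouwer-type order on this tree of sequences — a proper end-extension is smaller than its initial segment (copies are placed below their parent), while at the first coordinate of disagreement the smaller creation stage yields the smaller element (the later copy of a common ancestor sits closer to it from below). Given $B'\subseteq Gen(\mu)$ with $|B'|<\kappa$, let $T'$ be the closure under initial segments of the set of roads of elements of $B'$, so that $|T'|<\kappa$. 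A direct inspection shows that $tp_{bs}(a,B',Gen(\mu))$ depends only on the longest road-initial-segment of $a$ lying in $T'$ together with the cut determined by $a$'s next creation stage among the (finitely-per-node, $|T'|$-in-total) successor branches of that node present in $T'$; since there are at most $|T'|<\kappa$ such nodes and the total branching data is bounded by $|T'|$, fewer than $\kappa$ types are realized. This establishes the claim and, with the two previous paragraphs and the regularity of $\kappa$, the lemma.
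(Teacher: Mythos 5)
Your proof is correct, and its overall decomposition coincides with the paper's: the paper likewise splits $tp_{bs}(a,A,I)$ into the $I^0$-type of the generator of $a$ over the set $A'$ of generators of the parameters (justified by Fact \ref{Same_type_in_Gen}) plus the in-block type $tp_{bs}(a,A\cap Gen(\nu),Gen(\nu))$, handles the first coordinate by Lemma \ref{stable_I0_first_lemma}, and reduces the whole lemma to the claim that each $Gen(\nu)$ is $(<\kappa,bs)$-stable. The only cosmetic difference up to that point is the final count: the paper bounds the number of types by the product $|\{tp_{bs}(\nu,A',I^0)\mid\nu\in I^0\}|\times \sup\{B_\nu\mid\nu\in I^0\}\leq\lambda$ using $\kappa=\lambda^+$, whereas you sum over $B_0$ and invoke regularity; both are fine.

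Where you genuinely diverge is the proof of that key claim. The paper codes each $\sigma\in Gen(\nu)$ by the function $f_\sigma\in\mathcal{R}$ listing the creation stages along the road from $\nu$ to $\sigma$, asserts that $\sigma\mapsto f_\sigma$ is an order-isomorphism onto a suborder of $\mathcal{R}$, and then quotes Fact \ref{HT-stable}. You instead characterize the order on $Gen(\mu)$ as a Kleene--Brouwer-type order on the tree of roads and count the realized cuts over $T'$ directly. Your route is the more robust of the two: the paper's map is order-preserving on road-incomparable pairs but order-\emph{reversing} on ancestor--descendant pairs (if $\sigma'=(\sigma)^k$ then $\sigma'<\sigma$ in $I$, yet the nonzero part of $f_{\sigma'}$ properly end-extends that of $f_\sigma$, so $f_{\sigma'}>f_\sigma$ in $\mathcal{R}$), which is exactly the end-extension subtlety that your formulation --- ``a proper end-extension is smaller than its initial segment'' --- gets right; your cut-count is also self-contained, needing nothing from Hyttinen--Tuuri beyond what Lemma \ref{stable_I0_first_lemma} already uses. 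What the paper's approach buys, once repaired, is brevity, by outsourcing the counting to $\mathcal{R}$. One small inaccuracy on your side: a node of $T'$ can have up to $|B'|$-many (so possibly infinitely many) successors in $T'$, so the parenthetical ``finitely-per-node'' is wrong; this is harmless, since your argument only uses that the total branching data is bounded by $|T'|$ and that the cuts realized in a well-ordered set of successor stages number at most that set's size plus one, keeping the total below $\kappa$ by regularity.
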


\begin{proof}
Let us fix $A\subset I$ such that $|A|<\kappa$. From Fact \ref{Same_type_in_Gen}, for all $a\in I$ and $\nu\in I^0$ such that $a\in Gen(\nu)$ the following holds:
$$b\models tp_{bs}(a,A,I) \Leftrightarrow b\models tp_{bs}(\nu,A \backslash Gen(\nu),I)\cup tp_{bs}(a,A\cap Gen(\nu), Gen(\nu)).$$
Thus for all $a\in I$ and $\nu\in I^0$ with $a\in Gen(\nu)$, the type of $a$ is determine by $tp_{bs}(\nu,A \backslash Gen(\nu),I)$ and $tp_{bs}(a,A \cap Gen(\nu), Gen(\nu))$.
Let $A'\subseteq I^0$ be such that the following hold:
\begin{itemize}
\item for all $x\in A$ there is $y\in A'$, $x\in Gen(y)$;
\item for all $y\in A'$ there is $x\in A$, $x\in Gen(y)$.
\end{itemize}

Clearly $|A'|\leq |A|$, and by Fact \ref{Same_type_in_Gen}, for all $\nu\in I^0$, $tp_{bs}(\nu,A\backslash Gen(\nu),I)$ is determine by $tp_{bs}(\nu,A'\backslash \{\nu\},I^0)$. So for all $a\in I$ and $\nu\in I^0$ with $a\in Gen(\nu)$, $tp_{bs}(a,A,I)$ is determine by $tp_{bs}(\nu,A'\backslash \{\nu\},I^0)$ and $tp_{bs}(a,A\cap Gen(\nu), Gen(\nu))$. Therefore $|\{tp_{bs}(a,A,I)\mid a\in I\}|$ is bounded by 
$$|\{tp_{bs}(\nu,A',I^0)\mid \nu\in I^0\}|\ \times\  Sup(\{B_\nu \mid \nu \in I^0\})$$ where 
$$B_\nu=|\{tp_{bs}(a,A \cap Gen(\nu), Gen(\nu))\mid a\in Gen(\nu)\}|.$$

\begin{claim}
For all $\nu\in I^0$, $Gen(\nu)$ with the induced order is $(<\kappa, bs)$-stable.
\end{claim}
\begin{proof}
Let us  fix $\nu\in I^0$, $\sigma\in Gen(\nu)\backslash \{\nu\}$, and let $\{\nu_i\}_{i\leq n}$ be the road from $\nu$ to $\sigma$. Let us define $f_\sigma: \omega\rightarrow \kappa$ by  
$$f_\sigma(i)=\begin{cases} o(\nu_i) &\mbox{if } i< n\\
o(\sigma) & \mbox{if }i=n\\
0 & \mbox{in other case.	} \end{cases}$$
Notice that for all $\sigma,\sigma'\in Gen(\nu)$, $f_\sigma$ and $f_{\sigma'}$ are equal if and only if the road from $\nu$ to $\sigma$ is the same road from $\nu$ to $\sigma'$. Thus $f_\sigma=f_{\sigma'}$ if and only if $\sigma=\sigma'$. Since the road from $\nu$ to $\sigma$ is finite, $\{i<\omega\mid f_\sigma(i)\neq 0\}$ is finite. By the construction of $I$, for all $\sigma,\sigma'\in Gen(\nu)$, such that $\sigma,\sigma'\neq \nu$, $\sigma>\sigma'$ if and only if $f_\sigma(i)>f_{\sigma'}(i)$ where $i$ is the least number such that $f_\sigma(i)\neq f_{\sigma'}(i)$. Notice that $\nu$ is the maximum of $Gen(\nu)$. Let us define $f_\nu$ as 
$$f_\nu(i)=\begin{cases} 1 &\mbox{if } i=0\\
0 & \mbox{in other case.	} \end{cases}$$
so $f_\nu(0)>f_\sigma(0)$ for all $\sigma\in Gen(\nu)\backslash\{\nu\}$.
Therefore, $Gen(\nu)$ is isomorphic to a sub-order of $\mathcal{R}$ and by Fact \ref{HT-stable} $Gen(\nu)$ with the induced order is $(<\kappa, bs)$-stable.
\end{proof}
From the previous claim, we conclude that for all $\nu\in I^0$, $B_\nu<\kappa$. Since $\kappa=\lambda^+$, $Sup(\{B_\nu \mid \nu \in I^0\})\leq  \lambda$. From Lemma \ref{stable_I0_first_lemma} we know that $|\{tp_{bs}(\nu,A',I^0)\mid \nu\in I^0\}|<\kappa$, so $|\{tp_{bs}(\nu,A',I^0)\mid \nu\in I^0\}|\leq\lambda$. We conclude $|\{tp_{bs}(a,A,I)\mid a\in I\}|<\kappa$.
\end{proof}

\begin{thm}\label{A_desire_order}
There is a $(<\kappa, bs)$-stable $(\kappa, bs,bs)$-nice $\kappa$-colorable linear order.
\end{thm}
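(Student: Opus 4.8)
The linear order $I$ constructed above is already the candidate; the plan is to verify its three properties. That $I$ is $(\kappa,bs,bs)$-nice is exactly Corollary \ref{I_nice}, and $(<\kappa,bs)$-stability is Lemma \ref{I_stable} when $\kappa=\lambda^+$, while the remark following Corollary \ref{I_nice} handles the inaccessible case. Hence the only remaining task is to show that $I$ is $\kappa$-colorable, i.e. to exhibit a $\kappa$-coloration $F\colon I\to\kappa$.

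The plan for $F$ is to exploit the generator structure. Two preliminary observations organize the argument. First, since the only basic formulas here express the order, a type $p=tp_{bs}(b,B,I)$ is just the cut of $B$ determined by $b$, so its set of realizations $C_p=\{a\in I\mid a\models p\}$ is the convex set containing $b$. Second, by Definition \ref{Generator} every element of $I\setminus I^0$ equals $\mu^{i+1}$ for a \emph{unique} pair $(\mu,i)$ with $\mu\in I^i$ and $o(\mu)\le i<\kappa$, and for each fixed $\mu$ the set of direct copies $\{\mu^{i+1}\mid o(\mu)\le i<\kappa\}$ has size $\kappa$ and lies inside $Gen(\mu)$. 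I would therefore fix for every $\mu\in I$ a surjection $g_\mu\colon[o(\mu),\kappa)\to\kappa$ and set $F(\nu)=0$ for $\nu\in I^0$ and $F(\mu^{i+1})=g_\mu(i)$ otherwise. By the uniqueness just noted this is well defined, and it secures the crucial feature that $F\restriction Gen(\mu)$ is onto $\kappa$ for \emph{every} $\mu\in I$, since it is already onto $\kappa$ on the direct copies of $\mu$.

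To check that $F$ is a $\kappa$-coloration, fix $B\subseteq I$ with $|B|<\kappa$, an element $b\in I\setminus B$, the type $p=tp_{bs}(b,B,I)$, and a color $\alpha<\kappa$. By regularity of $\kappa$ choose $i^*<\kappa$ with $B\cup\{b\}\subseteq I^{i^*}$. For each $i$ with $i^*\le i<\kappa$ the copy $b^{i+1}$ realizes $tp_{bs}(b,I^i\setminus\{b\},I^i)$, and since $B\subseteq I^i\setminus\{b\}$ this gives $b^{i+1}\models p$. Moreover, because $o$-values strictly increase along roads (Definition \ref{Generator}), every $\sigma\in Gen(b^{i+1})$ has $o(\sigma)\ge o(b^{i+1})=i+1>i^*\ge o(c)$ for all $c\in B$, so $Gen(b^{i+1})\cap B=\emptyset$; by the consequence of Fact \ref{Same_type_in_Gen} each such $\sigma$ then shares the basic type of $b^{i+1}$ over $I\setminus Gen(b^{i+1})\supseteq B$, whence $Gen(b^{i+1})\subseteq C_p$. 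Finally, uniqueness of roads forces the generators $Gen(b^{i+1})$ for distinct $i\ge i^*$ to be pairwise disjoint. Since $F\restriction Gen(b^{i+1})$ is onto $\kappa$, each contributes a point of color $\alpha$; these are $\kappa$ distinct realizations of $p$ of color $\alpha$, and as $|C_p|\le|I|=\kappa$ we conclude $|\{a\models p\mid F(a)=\alpha\}|=\kappa$, as required.

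The step I expect to be the main obstacle is exactly that each cut $C_p$ contains a whole family of $\kappa$ pairwise disjoint generators: this is what lets a single ``surjection per generator'' coloring succeed simultaneously for all small types. Its content is the combination of (i) the copies $b^{i+1}$ for large $i$ realizing $p$, and (ii) their entire generators remaining inside $C_p$ because they are created at stages above every element of $B$; both rest on the monotonicity of $o$ along roads together with the ``same type outside $Gen$'' consequence of Fact \ref{Same_type_in_Gen}. Everything else—well-definedness of $F$, surjectivity on each generator, and the final cardinality count—is routine.
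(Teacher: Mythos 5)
Your proof is correct, and its skeleton matches the paper's: both take the order $I$ already constructed, quote Corollary \ref{I_nice} and Lemma \ref{I_stable} for niceness and stability, and build the coloration $F$ from the copy structure of $I$ (setting $F=0$ on $I^0$ and coloring each new copy via a function attached to its unique parent). The genuine difference is where the multiplicity $\kappa$ comes from. The paper sets $F(\nu)=G_1(g_{\nu'}(\nu))$, where $\nu'$ is the unique element with $(\nu')^{o(\nu)}=\nu$, $g_{\nu'}$ is a bijection from $Succ_I(\nu')=\{\sigma\in I\mid\sigma=(\nu')^{o(\sigma)}\}$ onto $\kappa$, and $G_1$ is the first coordinate of a pairing bijection $G:\kappa\to\kappa\times\kappa$; composing with $G_1$ makes every fiber of $F\restriction Succ_I(b)$ have size $\kappa$, so for each color $\alpha$ the paper finds $\kappa$ realizations of $p$ already among the direct copies $b^{i+1}$ lying above a stage $\theta$ that bounds $B\cup\{b\}$ (these realize $p$ because they share the type of $b$ over $I^\theta\setminus\{b\}$), and generators never enter the argument. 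Your coloring is thinner, merely onto $\kappa$ on each set of direct copies, so a fixed color may appear only once among the copies of $b$; you compensate by harvesting one point of color $\alpha$ from each of the $\kappa$ generators $Gen(b^{i+1})$, $i\geq i^*$, which obliges you to prove the extra facts that $Gen(b^{i+1})\cap B=\emptyset$, that each whole generator stays inside the realization set of $p$ (the consequence of Fact \ref{Same_type_in_Gen}), and that these generators are pairwise disjoint (uniqueness of parents plus monotonicity of $o$ along roads). Both arguments are sound; the paper's pairing-function trick buys a shorter verification, while your route shows that any coloring that is merely surjective on each $Succ_I(\mu)$ already works, at the price of leaning harder on the generator machinery.
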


\begin{proof}
From Corollary \ref{I_nice} and Lemma \ref{I_stable}, we only need to show that $I$ is $\kappa$-colorable. 
For all $\nu\in I$ let us define $Succ_I(\nu)$ as follows:
$$Succ_I(\nu)=\{\sigma\in I\mid \sigma=\nu^{o(\sigma)}\}.$$ We use the same notation of ordered trees because $I$ can be seen as an ordered tree. Notice that for all $\nu\in I$, $|Succ_I(\nu)|=\kappa$ and either $o(\nu)=0$, or there is a unique $\nu'\in I$ such that $\nu=(\nu')^{o(\nu)}$ (i.e. $\nu\in Succ_I(\nu')$).

Let us fix $G:\kappa\rightarrow\kappa\times\kappa$ a bijection, and $G_1$, $G_2$ be the functions such that $G(\alpha)=(G_1(\alpha),G_2(\alpha))$.  For all $\nu\in I$ let us fix a bijection $g_\nu:Succ_I(\nu)\rightarrow \kappa$.
Let us define $F:I\rightarrow\kappa$ by 
$$F(\nu)=\begin{cases} 0 &\mbox{if } o(\nu)=0\\
G_1(g_{\nu'}(\nu)) & \mbox{where }(\nu')^{o(\nu)}=\nu. \end{cases}$$

\begin{claim}
$F$ is a $\kappa$-coloration of $I$.
\end{claim}
\begin{proof}
Let $B\subseteq I$, $|B|<\kappa$, $b\in I\backslash B$, and $p= tp_{bs}(b,B,I)$. Since $|B|<\kappa$, there is $\gamma<\kappa$ such that $B\subset I^{\gamma}$. Let $\theta=\max\{o(b), \gamma\}$, so for all $\nu\in \{a\in Succ_I(b)\mid o(a)>\theta\}$, $b$ and $\nu$ have the same type of basic formulas over $I^\theta\backslash \{b\}$. 
In particular for all $\nu\in \{a\in Succ_I(b)\mid o(a)>\theta\}$, $\nu\models p$. By the way $F$ was define, we conclude that for any $\alpha<\kappa$, $|\{a\in Succ_I(b)\mid o(a)>\theta\ \&\ F(a)=\alpha\}|=\kappa$. Which implies that for any $\alpha<\kappa$, $|\{a\in Succ_I(b)\mid a\models p\ \&\ F(a)=\alpha\}|=\kappa$
\end{proof}
\end{proof}

\section{Ordered Colored Trees}\label{section_ordered_colored_trees}

\subsection{Colored trees}

We will use the $\kappa$-colorable linear order $I$ to construct trees with $\omega+1$ levels, $A^f(I)$, for every $f\in \kappa^\kappa$ with the property $A^f(I) \cong A^g(I)$ if and only if $f\ =^\kappa_{\omega}\ g$. These tress will be a mix of colored tree and ordered trees. 
For clarity and to avoid misunderstandings, in this section we will denote trees by $(T,\prec)$. Later on we will see that $\prec$ is the initial segment relation of the trees that we construct .
The coloured trees that we will use in this section, are essentially the same trees used by Hyttinen and Kulikov in \cite{HK} and by Hyttinen and Moreno in \cite{HM}.

Let $t$ be a tree, for every $x\in t$ we denote by $ht(x)$ the height of $x$, the order type of $\{y\in t | y\prec x\}$. Define $(t)_\alpha=\{x\in t|ht(x)=\alpha\}$ and $(t)_{<\alpha}=\cup_{\beta<\alpha}(t)_\beta$, denote by $x\restriction \alpha$ the unique $y\in t$ such that $y\in (t)_\alpha$ and $y\prec x$. If $x,y\in t$ and $\{z\in t|z\prec x\}=\{z\in t|z\prec y\}$, then we say that $x$ and $y$ are $\sim$-related, $x\sim y$, and we denote by $[x]$ the equivalence class of $x$ for $\sim$.\\
An $\alpha, \beta$-tree is a tree $t$ with the following properties:
\begin{itemize}
\item $|[x]|<\alpha$ for every $x\in t$.
\item All the branches have order type less than $\beta$ in $t$.
\item $t$ has a unique root.
\item If $x,y\in t$, $x$ and $y$ have no immediate predecessors and $x\sim y$, then $x=y$.
\end{itemize}
\begin{defn}\label{D.2.1}
Let $\lambda$ be a cardinal smaller than $\kappa$, and $\beta$ an ordinal smaller or equal to $\kappa$. A coloured tree is a pair $(t,c)$, where $t$ is a $\kappa^+$, $(\lambda+2)$-tree and $c$ is a map $c:t_\lambda\rightarrow \beta$ (the color function). 
\end{defn}

Two coloured trees $(t,c)$ and $(t',c')$ are isomorphic, if there is a trees isomorphism $f:t\rightarrow t'$ such that for every $x\in t_\lambda$, $c(x)=c'(f(x))$.

We will only consider trees in which every element with height less than $\lambda$, has infinitely many immediate successors, every maximal branch has order type $\lambda+1$. Notice that the intersection of two distinct branches has order type less than $\lambda$. 
We can see every coloured tree as a downward closed subset of $\kappa^{\leq \lambda}$.
In this section all the coloured trees have $\lambda=\omega$.

An ordered coloured tree is a tree $T\in K_{tr}^\omega$ with a color function $c:t_\omega\rightarrow \beta$.

We will follow the construction used \cite{HK} and \cite{HM}. 

Let us start from coloured trees which are subsets of $(\omega\times\kappa^4)^{\leq\omega}$, let us make some preparation before the actual construction.
Order the set $\omega\times \kappa\times \kappa\times \kappa\times \kappa$ lexicographically, $(\alpha_1,\alpha_2,\alpha_3,\alpha_4,\alpha_5)>(\theta_1,\theta_2,\theta_3,\theta_4,\theta_5)$ if for some $1\leq k \leq 5$, $\alpha_k>\theta_k$ and for every $i<k$, $\alpha_i=\theta_i$. Order the set $(\omega\times \kappa\times \kappa\times \kappa\times \kappa)^{\leq \omega}$ as a tree by initial segments.

Define the tree $(R_f,r_f)$ as, $R_f$ the set of all strictly increasing functions from some $n\leq \omega$ to $\kappa$ and $r_f$ is the color function such that for each $\eta$ with domain $\omega$, $r_f(\eta)=f(sup(rng(\eta)))$.

For every pair of ordinals $\alpha$ and $\theta$, $\alpha<\theta<\kappa$ and $i<\omega$ define $$R(\alpha,\theta,i)=\bigcup_{i< j\leq \omega}\{\eta:[i,j)\rightarrow[\alpha,\theta)\mid\eta \text{ strictly increasing}\}.$$

\begin{defn}
If $\alpha<\theta<\kappa$ and $\alpha,\theta,\gamma\neq 0$, let $\{Z^{\alpha,\theta}_\gamma|\gamma<\kappa\}$ be an enumeration of all downward closed subtrees of $R(\alpha,\theta,i)$ for all $i$, in such a way that each possible coloured tree appears cofinally often in the enumeration. Let $Z^{0,0}_0$ be the tree $(R_f,r_f)$.
\end{defn}

This enumeration is possible because there are at most\\ $|\bigcup_{i<\omega}\mathcal{P}(R(\alpha,\theta,i))|\leq \omega\times\kappa=\kappa$ downward closed coloured subtrees. Since for all $\theta<\kappa$, $|R(\alpha,\theta,i)|<\kappa$ there are at most $\kappa\times \kappa^{<\kappa}=\kappa$ coloured trees.

\begin{defn}\label{colorconst}
Define for each $f\in \beta^\kappa$ the coloured tree $(J_f,c_f)$ by the following construction.
For every $f\in \beta^\kappa$ define $J_f=(J_f,c_f)$ as the tree of all $\eta: s\rightarrow \omega\times \kappa^4$, where $s\leq \omega$, ordered by endextension, and such that the following conditions hold for all $i,j<s$:\\
Denote by $\eta_i$, $1<i<5$, the functions from $s$ to $\kappa$ that satisfies, $$\eta(n)=(\eta_1(n),\eta_2(n),\eta_3(n),\eta_4(n),\eta_5(n)).$$
\begin{enumerate}
\item $\eta\restriction_n\in J_f$ for all $n<s$.
\item $\eta$ is strictly increasing with respect to the lexicographical order on $\omega\times \kappa^4$.
\item $\eta_1(i)\leq \eta_1(i+1)\leq \eta_1(i)+1$.
\item $\eta_1(i)=0$ implies $\eta_2(i)=\eta_3(i)=\eta_4(i)=0$.
\item $\eta_1(i)<\eta_1(i+1)$ implies $\eta_2(i+1)\ge \eta_3(i)+\eta_4(i)$.
\item $\eta_1(i)=\eta_1 (i+1)$ implies $\eta_k (i)=\eta_k (i+1)$ for $k\in \{2,3,4\}$.
\item If for some $k<\omega$, $[i,j)=\eta_1^{-1}\{k\}$, then $$\eta_5\restriction_{[i,j)}\in Z^{\eta_2(i),\eta_3(i)}_{\eta_4(i)}.$$
\noindent Note that 7 implies $Z^{\eta_2(i),\eta_3(i)}_{\eta_4(i)}\subset R(\alpha,\theta,i)$ 
\item If $s=\omega$, then either 
\begin{itemize}
\item [(a)] there exists a natural number $m$ such that $\eta_1(m-1)<\eta_1(m)$, for every $k \ge m$ $\eta_1(k)=\eta_1(k+1)$, and the color of $\eta$ is determined by $Z^{\eta_2(m),\eta_3(m)}_{\eta_4(m)}$: $$c_f(\eta)=c(\eta_5\restriction_{[m,\omega)})$$ where $c$ is the coloring function of $Z^{\eta_2(m),\eta_3(m)}_{\eta_4(m)}$.\\
\end{itemize}
or
\begin{itemize}
\item [(b)] there is no such $m$ and then $c_f(\eta)=f(sup(rng(\eta_5)))$.
\end{itemize}
\end{enumerate}
\end{defn}

Notice that for every $f\in \beta^\kappa$ and $\delta<\kappa$ with $cf(\delta)=\omega$, there is $\eta\in J_f$ such that $rng(\eta_1)=\omega$ and $\eta_5$ is cofinal to $\delta$. This $\eta$ can be constructed by taking $\langle \xi(i)\mid i<\omega\rangle$ a cofinal sequence to $\delta$, let $\eta_1=id$; let $\eta_2$, $\eta_3$, and $\eta_4$ be such that for every $i<\omega$, $\xi\restriction \{i\}\in Z^{\eta_2(i),\eta_3(i)}_{\eta_4(i)}$. Finally let $\eta_5\restriction \{i\}=\xi\restriction \{i\}$. It is clear that $\eta\in J_f$, $rng(\eta_1)=\omega$, and $\eta_5$ is cofinal to $\delta$. In particular this $\eta$ satisfies $c_f(\eta)=f(\delta)$.

\begin{fact}[Hyttinen-Kulikov, \cite{HK}, Hyttinen-Moreno, \cite{HM}]\label{colorisom}
For every $f,g\in \beta^\kappa$ the following holds $$f\ =^\beta_\omega\ g \Leftrightarrow  J_f\cong J_g$$
\end{fact}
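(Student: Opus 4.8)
The plan is to prove both implications, writing $S=\{\delta<\kappa\mid \cf(\delta)=\omega\}$ so that $f\ =^\beta_\omega\ g$ means exactly that $D:=\{\delta<\kappa\mid f(\delta)\neq g(\delta)\}\cap S$ is non-stationary. The first thing I would record is the structural observation that the underlying (uncoloured) tree of $J_f$ does not depend on $f$: conditions (1)--(7) of Definition \ref{colorconst} never mention $f$, and $f$ enters only through $c_f$ on the top level, and there only on the branches of case 8(b), namely those $\eta$ with $\rng(\eta_1)=\omega$, where $c_f(\eta)=f(\sup\rng(\eta_5))$; on the case 8(a) branches the colour is read off a fixed tree $Z^{\eta_2(m),\eta_3(m)}_{\eta_4(m)}$ and is $f$-independent. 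Since $\eta_5$ is strictly increasing of length $\omega$, every case 8(b) branch has $\sup\rng(\eta_5)\in S$, and by the paragraph following Definition \ref{colorconst}, for every $\delta\in S$ there is such a branch with $\eta_5$ cofinal in $\delta$, whose colour is precisely $f(\delta)$.

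For the implication $J_f\cong J_g\Rightarrow f\ =^\beta_\omega\ g$, let $F\colon J_f\to J_g$ be a colour-preserving tree isomorphism. Using that $|J_f|=|J_g|=\kappa$ and that both are downward closed subsets of $(\omega\times\kappa^4)^{\leq\omega}$, I would form the club $C$ of ordinals $\delta$ closed under $F$, $F^{-1}$ and the initial-segment operations, in the sense that $F$ and $F^{-1}$ send nodes all of whose coordinates lie below $\delta$ to nodes with the same property. Fix $\delta\in C\cap S$ and a case 8(b) branch $\eta\in J_f$ with $\eta_5$ cofinal in $\delta$, so $c_f(\eta)=f(\delta)$. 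Every proper initial segment of $\eta$ has all coordinates below $\delta$, hence so does every proper initial segment of $F(\eta)$; applying the same to $F^{-1}$ forces $F(\eta)$ to be cofinal in $\delta$ as well, and it rules out $F(\eta)$ being a case 8(a) branch (such a branch is eventually confined to a single $Z$-tree whose $\eta_5$-values are bounded by a coordinate below $\delta$, hence bounded). Therefore $F(\eta)$ is a case 8(b) branch cofinal in $\delta$, so $c_g(F(\eta))=g(\delta)$, and colour preservation gives $f(\delta)=g(\delta)$. Thus $D\cap C=\emptyset$ and $D$ is non-stationary.

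For the converse, assume $D$ is non-stationary and fix a club $C$ disjoint from $D$, so that $f(\delta)=g(\delta)$ for all $\delta\in C\cap S$. I would build $F\colon J_f\to J_g$ to be the identity on $(J_f)_{<\omega}$ and on every case 8(a) branch — legitimate because these parts of $J_f$ and $J_g$ coincide, colours included — and on the case 8(b) branches cofinal in some $\delta\in C$, where $f(\delta)=g(\delta)$ so the identity again preserves colour. The only branches left are the case 8(b) branches cofinal in some $\delta\in S\setminus C$, where $c_f=f(\delta)$ may differ from $c_g=g(\delta)$; here I would invoke the homogeneity built into the enumeration $\{Z^{\alpha,\theta}_\gamma\}$, namely that every coloured tree appears cofinally often. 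This guarantees that, converging to such a $\delta$, $J_g$ already contains case 8(a) branches of colour $f(\delta)$, and symmetrically $J_f$ contains case 8(a) branches of colour $g(\delta)$, so that rerouting the offending branches onto these (and back) yields a colour-preserving bijection of the bouquet of branches over each bad $\delta$.

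The main obstacle is precisely the coherence of this last step: the rerouting must be carried out simultaneously over all bad $\delta\in S\setminus C$ and must respect the initial-segment relation $\prec$, since branches converging to distinct ordinals share initial segments and the case 8(a) and case 8(b) branches over a fixed $\delta$ diverge at different finite levels. I would handle this by replacing the naive branch-swapping with a recursion on the levels of the tree that, at each splitting node, uses the cofinal appearance of every coloured tree to match subtrees of $J_f$ with subtrees of $J_g$ realizing the same distribution of colours; the backward direction's closure argument is comparatively routine once the observation about images of cofinal case 8(b) branches is in place.
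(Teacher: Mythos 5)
A preliminary remark: the paper does not prove this statement; it imports it from \cite{HK} and \cite{HM}, so your attempt has to be measured against the argument given there (whose structure is mirrored inside this paper's own proof of Theorem \ref{Afcong}). Your direction $J_f\cong J_g\Rightarrow f\ =^\beta_\omega\ g$ is essentially that standard argument and is correct: take the club of $\delta$ such that $F$ and $F^{-1}$ send nodes with all coordinates below $\delta$ to nodes with all coordinates below $\delta$ (one should pass to its accumulation points so that a putatively bounded image of a branch can be trapped below a closure point $<\delta$ --- this is what really makes ``applying the same to $F^{-1}$'' work); conditions (5) and (7) of Definition \ref{colorconst} guarantee both that a case 8(b) branch cofinal in $\delta$ has all coordinates below $\delta$ and that a case 8(a) branch is never cofinal in $\delta$, so colour preservation yields $f(\delta)=g(\delta)$ on a club of $\omega$-cofinal points.

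The forward direction contains a genuine gap, located exactly where you place ``the main obstacle''. The starting configuration --- $F$ equal to the identity on $(J_f)_{<\omega}$ and on the case 8(a) branches, with only the bad case 8(b) branches rerouted --- is not merely delicate to complete, it is impossible: a node of level $\omega$ is the union of its proper initial segments, so a tree isomorphism fixing every finite-level node fixes every top node as well; there is nothing left to reroute. Any rerouting must move finite-level nodes, and these are shared by branches converging to different ordinals, good and bad. Your proposed repair --- a recursion on the $\omega+1$ levels that matches, at each splitting node, subtrees of $J_f$ and of $J_g$ ``realizing the same distribution of colours'' --- is circular: at the root such a matching is literally the statement $J_f\cong J_g$ being proved, and the coloured subtree above any finite node still depends on all of $f$ on a tail of $\kappa$, so descending one level never reduces the problem. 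The proof in \cite{HK} and \cite{HM} runs a different induction: a back-and-forth of length $\kappa$ along the filtration of $J_f$ and $J_g$ by coordinate bounds, sampled on a club $C$ on which $f$ and $g$ agree at every $\omega$-cofinal point. At a successor stage the new material appearing between consecutive points of $C$ is a coloured tree of size $<\kappa$ (this is where the standing hypothesis $\gamma^\omega<\kappa$ enters), and the clause that every coloured tree appears cofinally often among the $Z^{\alpha,\theta}_\gamma$ is used to absorb such a chunk --- badly coloured case 8(b) branches included --- into single-block, i.e.\ case 8(a), subtrees of the other structure, wholesale rather than branch by branch. You identified the correct mechanism (cofinal repetition of the $Z$'s) but not the $\kappa$-length induction that makes it usable, and that induction is the substance of the proof.
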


The previous fact is an important step in \cite{HK} and in \cite{HM} to construct a reductions from $=^2_\omega$ to the isomorphism relation of different stable unsuperstable theories. We will use the coloured trees $J_f$ to construct ordered coloured trees.
Before we start with the construction of the ordered coloured trees, let us prove an important property of the coloured trees.

\begin{lemma}\label{nonzero}
For every $f\in \beta^\kappa$, $\theta<\beta$, and $\eta\in (J_f)_{<\omega}$, there is $\xi\in (J_f)_\omega$ such that $\eta<\xi$ and $c_f(\xi)=\theta$.
\end{lemma}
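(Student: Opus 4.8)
The plan is to build explicitly a length-$\omega$ branch $\xi$ through $\eta$ whose colour is computed by clause~(a) of condition~8 of Definition~\ref{colorconst}, arranging the final constant block so that the governing $Z$-tree has a branch of colour exactly $\theta$. First I would dispose of the degenerate case: writing $n=\dom(\eta)$, if $n=0$ I extend $\eta$ by one node inside $Z^{0,0}_0=(R_f,r_f)$ (any single value gives a strictly increasing function of domain a singleton, hence a legitimate node), so I may assume $n\ge 1$. Set $p=\eta_1(n-1)$, the maximal value attained by $\eta_1$ on $\dom(\eta)$.

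Next I would install a single jump of $\eta_1$ at $m=n$: declare $\xi_1(k)=p+1$ for all $k\ge m$, so that $\eta_1(m-1)=p<p+1=\xi_1(m)$ and $\xi_1$ is constant on $[m,\omega)$. Condition~5 forces $\xi_2(m)\ge \eta_3(n-1)+\eta_4(n-1)=:B$, so I choose ordinals $\alpha=\max(B,1)$ and $\rho=\alpha+\omega$; these are nonzero with $\alpha<\rho$, and $[\alpha,\rho)$ is infinite, which is exactly what is needed for $R(\alpha,\rho,m)$ to contain a full strictly increasing function $\zeta\colon[m,\omega)\to[\alpha,\rho)$, e.g.\ $\zeta(m+k)=\alpha+k$.

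The crucial step is the single appeal to the enumeration: the downward closure $T$ of the one branch $\zeta$, coloured by $c(\zeta)=\theta$ (legitimate since $\theta<\beta$), is a downward closed coloured subtree of $R(\alpha,\rho,m)$, hence equals $Z^{\alpha,\rho}_\gamma$ for some $\gamma$ because every possible coloured tree appears cofinally often in the enumeration. I then define $\xi$ by $\xi\restriction n=\eta$ and $\xi(k)=(p+1,\alpha,\rho,\gamma,\zeta(k))$ for $k\ge m$. Verifying $\xi\in J_f$ is routine: conditions~1--4 are inherited on the $\eta$-part and trivial on the tail (where $\xi_1=p+1>0$ and the fifth coordinate strictly increases); condition~5 holds at the unique jump by $\alpha\ge B$; condition~6 holds since $\xi_2,\xi_3,\xi_4$ are constant on $[m,\omega)$; and for condition~7 the only new block is $\xi_1^{-1}\{p+1\}=[m,\omega)$ (note $p+1\notin\rng(\eta_1)$, as $\eta_1$ is non-decreasing with maximum $p$), where $\xi_5\restriction_{[m,\omega)}=\zeta\in T=Z^{\alpha,\rho}_\gamma$. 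Finally clause~(a) of condition~8 applies with this $m$, giving $c_f(\xi)=c(\zeta)=\theta$, while $\eta$ is a proper initial segment of $\xi$, so $\eta<\xi$.

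I expect the difficulty to be bookkeeping rather than conceptual: the only real content is the cofinal-occurrence property of the enumeration, which lets me realise the prescribed colour, while everything else amounts to checking that a jump of $\eta_1$ can always be installed. This is precisely why condition~5 dictates the lower bound $\alpha\ge B$ on $\xi_2(m)$, and why $[\alpha,\rho)$ must be infinite to host a length-$\omega$ branch. The points requiring a little care are the degenerate root case $n=0$ and the observation that installing the jump does not accidentally prolong the preceding block of $\eta$.
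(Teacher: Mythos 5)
Your proof is correct, but it takes a genuinely different route from the paper's. The paper never prescribes the colour through clause (8)(a) of Definition~\ref{colorconst}: it first replaces $f$ by the function $g$ that agrees with $f$ on $\acc(\acc(C))$ and equals $\theta$ elsewhere (for an arbitrary club $C$), invokes Fact~\ref{colorisom} to obtain a coloured-tree isomorphism $J_f\cong J_g$, and then, working in $J_g$, extends $\eta$ to a branch $\xi$ whose first coordinate $\xi_1$ is \emph{strictly increasing} (so clause (8)(b) governs the colour) and whose fifth coordinate runs along elements of $C$ cofinal in a carefully chosen $\delta\in\acc(C)\setminus\acc(\acc(C))$ of cofinality $\omega$; this gives $c_g(\xi)=g(\sup(\rng(\xi_5)))=g(\delta)=\theta$. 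You instead stay inside $J_f$ itself, install a final constant block of $\xi_1$, and use the cofinal-occurrence property of the enumeration $\{Z^{\alpha,\rho}_\gamma\mid\gamma<\kappa\}$ to realize the downward closure of a single branch coloured $\theta$ as the governing $Z$-tree, so that clause (8)(a) yields the colour directly. Your key step is legitimate: the enumeration ranges over \emph{all} downward closed coloured subtrees of the $R(\alpha,\rho,i)$'s, including such degenerate non-splitting ones (indeed, the paper's own construction uses singleton $Z$-trees $\{\zeta:[n,n+1)\rightarrow[\alpha_r,\alpha_r+1)\}$), and your bookkeeping at the junction (the jump to $p+1$ not prolonging the last block, condition (5) forcing $\xi_2(m)\ge\eta_3(n-1)+\eta_4(n-1)$, the $n=0$ case) is handled correctly. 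What the two approaches buy: yours is self-contained and more elementary, needing only the definition of $J_f$ and the enumeration property, whereas the paper's proof leans on Fact~\ref{colorisom} — a substantial black box — but in exchange needs only trivial singleton $Z$-blocks and recycles the $=^\beta_\omega$-invariance machinery that is central to the rest of the paper.
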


\begin{proof}
Let $f\in \beta^\kappa$ and $C\subseteq \kappa$ be a club. Let us define $g\in \beta^\kappa$ by: 
$$g(\alpha)=\begin{cases} f(\alpha) &\mbox{if } \alpha\in \acc (\acc (C))\\
\theta & \mbox{in other case. } \end{cases}$$
Since $\acc (\acc (C))$ is a club, $f\ =^\beta_\omega\ g$ and $J_f\cong J_g$. Therefore it is enough to show that for any $\eta\in (J_g)_{<\omega}$, there is $\xi\in (J_g)_\omega$ such that $\eta<\xi$ and $c_g(\xi)=\theta$.

Let $\eta\in (J_g)_{<\omega}$, $\eta(n)=(\eta_1(n),\eta_2(n),\eta_3(n),\eta_4(n),\eta_5(n))$, and denote by $U$ the set $\acc (C)\backslash \acc (\acc (C))$. It is clear that $U$ is unbounded, let $\delta\in U$ be such that $cf(\delta)=\omega$ and let $\langle\alpha_i\mid i<\omega\rangle$ be a sequence of elements of $C$ such that $\bigcup_{i<\omega}\alpha_i=\delta$, $sup (rng(\eta_3))+sup (rng(\eta_4))<\alpha_0$, and $sup (rng(\eta_5))<\alpha_0$. Let us construct $\xi$ in an inductive way:
\begin{itemize}
\item $\xi\restriction dom(\eta)=\eta$;
\item if $dom(\eta)\leq n<\omega$, 
\begin{itemize}
\item $\xi_1(n)=\xi_1(n-1)+1$;
\item $\xi_2(n)=\alpha_r$, where $$r=min\{i<\omega\mid\alpha_i>\xi_3(n-1)+\xi_4(n-1)\};$$
\item $\xi_3(n)=\xi_2(n)+1$;
\item $\xi_4(n)=\gamma_n$, the least ordinal such that $$Z^{\xi_2(n),\xi_3(n)}_{\gamma_n}=\{\zeta:[n,n+1)\rightarrow[\alpha_r,\alpha_r+1)\};$$
\item  $\xi_5(n)=\xi_2(n)$;
\end{itemize}
\end{itemize}
By the way we defined $\xi$, we know that $\xi\in J_g$ and $\eta\prec\xi$. By the item (8) on the construction of $J_g$, we know that $c_g(\xi)=g(sup(rng(\xi_5)))=g(\delta)$. Since $\delta\notin acc(acc(C))$, $c_g(\xi)=g(\delta)=\theta$ as we wanted.
\end{proof}

Notice that for any $f,g\in \beta^\kappa$, $J_f$ and $J_g$ are isomorphic as trees but not as colored trees. This is because $f$ is only used to define the dolor function of $J_f$.

\subsection{Construction of ordered coloured trees}

For each $f\in \beta^{\kappa}$ we will use the coloured trees $J_f$ to construct ordered coloured trees, which will be the base for the construction of the models in Section \ref{Models_section}.

Let us define the following subtrees $$J_f^\alpha=\{\eta\in J_f\mid \exists\theta<\alpha~(rng(\eta)\subset \omega\times\theta^4)\}.$$ 
Notice that $J_f^0=\{\emptyset\}$ and $dom(\emptyset)=0$.
Let us denote by $Acc(\kappa)=\{\alpha<\kappa\mid \alpha=0~ \text{or}~ \alpha \text{ is a limit ordinal}\}$. For all $\alpha\in Acc(\kappa)$ and  $\eta\in J_f^\alpha$ with $dom(\eta)=m<\omega$ define $$W_\eta^\alpha=\{\zeta\mid dom(\zeta)=[m,s), m\leq s\leq \omega, \eta^\frown\zeta\in J_f^{\alpha+\omega}, \eta^\frown (\zeta\restriction \{m\})\notin J_f^\alpha\}.$$ 
Notice that by the way $J_f$ was constructed, for every $\eta\in J_f$ with finite domain and $\alpha<\kappa$, the set $$\{(\theta_1,\theta_2,\theta_3,\theta_4,\theta_5)\in (\omega\times
\kappa^4)\backslash (\omega\times\alpha^4)\mid \eta^\frown (\theta_1,\theta_2,\theta_3,\theta_4,\theta_5)\in J_f^{\alpha+\omega}\}$$ is either empty or has size $\omega$.
Let $\sigma_\eta^\alpha$ be an enumeration of this set, when this set is not empty.

Let us denote by $\mathcal{T}=(\kappa\times \omega\times Acc(\kappa)\times \omega\times
\kappa\times\kappa\times
\kappa\times\kappa)^{\leq\omega}$. 
For every $\xi \in \mathcal{T}$ there are functions $\{\xi_i\in \kappa^{\leq \omega}\mid 0<i\leq 8\}$ such that for all $i\leq 8$, $dom(\xi_i)=dom(\xi)$ and for all $n\in dom(\xi)$, $\xi(n)=(\xi_1(n),\xi_2(n),\xi_3(n),\xi_4(n),\xi_5(n),\xi_6(n),\xi_7(n),\xi_8(n))$. 
For every $\xi\in \mathcal{T}$ let us denote $(\xi_4,\xi_5,\xi_6,\xi_7,\xi_8)$ by $\overline{\xi}$.

\begin{defn}\label{Gammas}
For all $\alpha\in Acc(\kappa)$ and  $\eta\in \mathcal{T}$ with $\overline{\eta}\in J_f$, $dom(\eta)=m<\omega$ define
$\Gamma_\eta^\alpha$ as follows:

If $\overline{\eta}\in J_f^\alpha$, then $\Gamma_\eta^\alpha$ is the set of elements $\xi$ of $\mathcal{T}$ such that:

\begin{enumerate}
\item $\xi\restriction m=\eta,$
\item $\overline{\xi}\restriction dom(\xi)\backslash m \in W^\alpha_\eta,$
\item $\xi_3$ is constant on $dom(\xi)\backslash m,$
\item $\xi_3(m)=\alpha$,
\item for all $n\in dom(\xi)\backslash m$, let $\xi_2(n)$ be the unique $r<\omega$ such that $\sigma_{\zeta}^\alpha(r)=\overline{\xi}(n)$, where $\zeta=\overline{\xi}\restriction n$.
\end{enumerate}

If $\overline{\eta}\notin J_f^\alpha$, then $\Gamma_\eta^\alpha=\emptyset$.

\end{defn}

Notice that $\xi_2(n)$ and $\xi_3(n)$ can be calculated from $\overline{\xi}\restriction n+1$.

For $\eta\in \mathcal{T}$ with $\overline{\eta}\in J_f$, $dom(\eta)=m<\omega$ define $$\Gamma(\eta)=\bigcup_{\alpha\in Acc(\kappa)}\Gamma_\eta^\alpha .$$
Finally we can define $A^f$ by induction. Let $T_f(0)=\{\emptyset\}$ and for all $n<\omega$, $$T_f(n+1)=T_f(n)\cup\bigcup_{\eta\in T_f(n)~dom(\eta)=n}\Gamma(\eta),$$ for $n=\omega$, $$T_f(\omega)=\bigcup_{n<\omega}T_f(n).$$

For  $0<i\leq 8$ let us denote by $s_i(\eta)=sup\{\eta_i(n)\mid n<\omega\}$ and $s_\omega(\eta)=sup\{s_i(\eta)\mid i\leq 8\}$, finally $$A^f=T_f(\omega)\cup \{\eta\in \mathcal{T}\mid  dom(\eta)=\omega, \forall m<\omega (\eta\restriction m\in T_f(\omega))\}.$$ 
Define the color function $d_f$ by $d_f(\eta)=c_f(\overline{\eta})$ if $s_1(\eta)< s_\omega(\eta)$ and $d_f(\eta)=f(s_1(\eta))$ otherwise.
It is clear that $A^f$ is closed under initial segments, indeed the relations $\prec$, $(P_n)_{n\leq\omega}$, and $h$ of Definition \ref{Shtree} have a canonical interpretation in $A^f$. 

Now we finish the construction of $A^f$ by using the $\kappa$-colorable linear order $I$. 
We only have to define $<\restriction Suc_{A^f}(\eta)$ for all $\eta\in A^f$ with finite domain. 
Properly speaking, $A^f$ will not be an ordered coloured tree as in Definition \ref{Shtree}, but it will be isomorphic to an ordered coloured tree as in Definition \ref{Shtree}. 

Let us proceed to define $<\restriction Suc_{A^f}(\eta)$.  Let $F:I\rightarrow\kappa$ be a $\kappa$-coloration of $I$.
 
For any $\eta\in A^f$ with domain $m<\omega$, we will define the order $<\restriction Suc_{A^f}(\eta)$ such that it is isomorphic to $I$ and satisfies the following:

$(\ast)$
\textit{
 For any set $B\subset Suc_{A^f}(\eta)$ of size less than $\kappa$, $p=tp_{bs}(x,B,Suc_{A^f}(\eta))$%
 , and any tuple $(\theta_2,\theta_3)\in \omega\times Acc(\kappa)$ with $\theta_3\ge \eta_3(m-1)$, if $p$ is realized in $Suc_{A^f}(\eta)$, then
 there are $\kappa$ many $\gamma<\kappa$ such that $\eta^\frown (\gamma,\theta_2,\theta_3,\sigma^{\theta_3}_{\overline{\eta}}(\theta_2))\models p$.
}

By the construction of $A^f$, an isomorphism between $\{(\theta_1,\theta_2,\theta_3)\in \kappa\times \omega\times Acc(\kappa)\mid \theta_3\ge \eta_3(m-1) \}$ and $I$, induces an order in $Suc_{A^f}(\eta)$.

\begin{defn}

Recall $F$ the $\kappa$-coloration of $I$ in Theorem \ref{A_desire_order}. For all $\theta, \alpha<\kappa$, let fix bijections $\tilde{G}_\theta:\{(\theta_2,\theta_3)\in \omega\times Acc(\kappa) \mid \theta_3\ge \theta\}\rightarrow \kappa$ and $\tilde{H}_\alpha:F^{-1}[\alpha]\rightarrow\kappa$. 
Notice that these functions exist because $F$ is a $\kappa$-coloration of $I$ and there are $\kappa$ tuples $(\theta_2,\theta_3)$.

Let us define $\tilde{\mathcal{G}}_\theta:\{(\theta_1,\theta_2,\theta_3)\in \kappa\times \omega\times Acc(\kappa) \mid \theta_3\ge \theta\}\rightarrow I$, by $\tilde{\mathcal{G}}_\theta((\theta_1,\theta_2,\theta_3))=a$ where $a$ is the unique element that satisfies:
\begin{itemize}
\item $\tilde{G}_\theta((\theta_2,\theta_3))=\alpha$;
\item $\tilde{H}_\alpha(a)=\theta_1$.
\end{itemize} 

\end{defn}

For any $\eta\in A^f$ with domain $m<\omega$ and $\eta_3(m-1)=\theta$, the isomorphism $\tilde{\mathcal{G}}_\theta$ induces an order in $Suc_{A^f}(\eta)$. Let us define $<\restriction Suc_{A^f}(\eta)$ as the induced order given by  $\tilde{\mathcal{G}}_\theta$.

\begin{fact}
Suppose $\eta\in A^f$ has domain $m<\omega$ and $\eta_3(m-1)=\theta$. Then
$<\restriction Suc_{A^f}(\eta)$ satisfies ($*$). 
\end{fact}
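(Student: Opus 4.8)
The plan is to unwind the definition of the order on $Suc_{A^f}(\eta)$ and reduce property $(\ast)$ to the coloration property of $I$ furnished by Theorem \ref{A_desire_order}. First I would fix $\eta$ with $\dom(\eta)=m$ and $\eta_3(m-1)=\theta$, and recall that by construction $<\restriction Suc_{A^f}(\eta)$ is the order transported from $I$ along the bijection $\tilde{\mathcal{G}}_\theta$ between the index set $\{(\theta_1,\theta_2,\theta_3)\mid \theta_3\ge\theta\}$ and $I$. Under this identification, each successor $\eta^\frown(\gamma,\theta_2,\theta_3,\sigma^{\theta_3}_{\overline{\eta}}(\theta_2))$ corresponds to the element $\tilde{\mathcal{G}}_\theta((\gamma,\theta_2,\theta_3))\in I$, so the order type of $Suc_{A^f}(\eta)$ is literally that of $I$, and basic types $tp_{bs}(x,B,Suc_{A^f}(\eta))$ match basic types over the corresponding subset of $I$.

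Next I would take $B\subset Suc_{A^f}(\eta)$ with $|B|<\kappa$ and a basic type $p=tp_{bs}(x,B,Suc_{A^f}(\eta))$ realized in $Suc_{A^f}(\eta)$, and push $B$ and $p$ across $\tilde{\mathcal{G}}_\theta$ to a set $B'\subset I$ with $|B'|<\kappa$ and a basic type $p'$ realized in $I$. Applying the defining property of a $\kappa$-coloration to $F$, $B'$, and $p'$, for the specific color $\alpha=\tilde{G}_\theta((\theta_2,\theta_3))$ determined by the given tuple $(\theta_2,\theta_3)$ with $\theta_3\ge\theta=\eta_3(m-1)$, I obtain $\kappa$ many $a\in I$ with $a\models p'$ and $F(a)=\alpha$. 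The point of the auxiliary bijections $\tilde{H}_\alpha\colon F^{-1}[\alpha]\to\kappa$ is that these are exactly the elements $a$ whose $\tilde{\mathcal{G}}_\theta$-preimage has the prescribed second and third coordinates $(\theta_2,\theta_3)$; so each such $a$ corresponds to a triple $(\gamma,\theta_2,\theta_3)$ with $\tilde{H}_\alpha(a)=\gamma$, and as $a$ ranges over the $\kappa$-sized set above, $\gamma$ ranges over $\kappa$ many distinct ordinals.

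Translating back, each of these $\kappa$ many values of $\gamma$ yields a successor $\eta^\frown(\gamma,\theta_2,\theta_3,\sigma^{\theta_3}_{\overline{\eta}}(\theta_2))$ that realizes $p$, which is exactly the conclusion of $(\ast)$. The main obstacle I anticipate is purely bookkeeping: verifying that the identification $\tilde{\mathcal{G}}_\theta$ genuinely carries basic types of $Suc_{A^f}(\eta)$ to basic types of $I$ and conversely, i.e. that realizing $p$ in the successor set is equivalent to realizing $p'$ in $I$, and that the fourth coordinate $\sigma^{\theta_3}_{\overline{\eta}}(\theta_2)$ is correctly determined by $(\theta_2,\theta_3)$ and hence adds no further constraint. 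Since $<\restriction Suc_{A^f}(\eta)$ is \emph{defined} to be the induced order, this equivalence is immediate from the fact that $\tilde{\mathcal{G}}_\theta$ is an order isomorphism, so the heart of the argument is simply matching the color $\alpha$ to the tuple $(\theta_2,\theta_3)$ via $\tilde{G}_\theta$ and reading off the $\kappa$ many $\gamma$'s from $\tilde{H}_\alpha$ and the $\kappa$-coloration property of $F$.
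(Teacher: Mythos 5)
Your proposal is correct and is essentially the paper's own argument: the paper likewise transports the type across $\tilde{\mathcal{G}}_\theta$ to a basic type $q$ over a small subset of $I$, applies the $\kappa$-coloration property of $F$ to the color $\tilde{G}_\theta((\theta_2,\theta_3))$, and reads off the $\kappa$ many first coordinates $\gamma=\tilde{H}_{\tilde{G}_\theta((\theta_2,\theta_3))}(a)$ of the desired successors $\eta^\frown(\gamma,\theta_2,\theta_3,\sigma^{\theta_3}_{\overline{\eta}}(\theta_2))$. The bookkeeping points you flag (types match under the order isomorphism; the last coordinates are determined by $(\theta_2,\theta_3)$) are exactly the ones the paper treats as immediate.
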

\begin{proof}
Let $b\in Suc_{A^f}(\eta)$, $(\theta_2,\theta_2)\in \omega\times Acc(\kappa)$ such that $\theta_3\ge \eta_3(m-1)=\theta$, and $B\subseteq Suc_{A^f}(\eta)$ have size less than $\kappa$. Let us denote by $q$ the type $tp_{bs}(\tilde{\mathcal{G}}_\theta(b_1,b_2,b_3), \tilde{\mathcal{G}}_\theta (B\cap(\kappa\times\omega\times Acc(\kappa))), I)$. By the construction of $\tilde{G}_\theta$, since $F$ is a $\kappa$-coloration of $I$, $$|\{a\in I\mid a\models q \ \&\ F(a)=\tilde{G}_\theta(\theta_2,\theta_3)\}|=\kappa.$$
Therefore for all $a$ such that $a\models q$ and $F(a)=\tilde{G}_\theta(\theta_2,\theta_3)$, $$\eta^\frown (\tilde{H}_{\tilde{G}_\theta((\theta_2,\theta_3))}(a),\theta_2,\theta_3,\sigma^{\theta_3}_{\overline{\eta}}(\theta_2))\models p$$
\end{proof}

It is clear that $(A^f,\prec,(P_n)_{n\leq\omega},<, h)$ is isomorphic to a subtree of $I^{\leq \omega}$ in the sense of Definition \ref{Shtree}.

\begin{remark}\label{equal_orders}
Notice that for any $\eta\in A^f$, $<\restriction Suc_{A^f}(\eta)$ is isomorphic to $I$. Therefore for any $\zeta,\eta\in A^f$, $<\restriction Suc_{A^f}(\zeta)$ and $<\restriction Suc_{A^f}(\eta)$ are isomorphic. Even more, the construction of $<\restriction Suc_{A^f}(\eta)$ only depends on $\eta_3(m-1)$, where $m<\omega$ is the domain of $\eta$.
\end{remark}

\begin{remark}\label{equal_trees}
Same as in the construction of the colored trees $J_f$, the function$f\in \beta^\kappa$ is only used to define the color function in the construction of $A^f$. 
So if $f,g\in \beta^\kappa$ and $\alpha$ are such that $f\restriction\alpha=g\restriction\alpha$, then $J_f^\alpha=J_g^\alpha$. As a consequence  $f\restriction\alpha=g\restriction\alpha$ implies that $A^f_\alpha= A^g_\alpha$.
\end{remark}

Notice that the only property we used from $I$ to construct the ordered coloured trees was that it is a  $\kappa$-colorable linear order. Therefore the construction can be done with any $\kappa$-colorable linear order.
  
\begin{thm}\label{Afcong}
Suppose $f,g\in \beta^\kappa$, then $f\ =^\beta_\omega\ g$ if and only if $A^f\cong A^g$ (as ordered coloured trees).
\end{thm}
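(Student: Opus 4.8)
The plan is to reduce the isomorphism statement for the ordered coloured trees $A^f,A^g$ to the already-established isomorphism statement for the underlying coloured trees $J_f,J_g$ (Fact \ref{colorisom}), so that the heart of the matter is to show that the $I$-order we superimposed on the successor sets does not obstruct, and indeed preserves, the correspondence $f\ =^\beta_\omega\ g\iff J_f\cong J_g$.

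For the forward direction, I would assume $f\ =^\beta_\omega\ g$, which gives $J_f\cong J_g$ as coloured trees by Fact \ref{colorisom}; call this isomorphism $\Phi$. The task is to upgrade $\Phi$ to an isomorphism of ordered coloured trees $\Psi\colon A^f\to A^g$. The key observation, recorded in Remark \ref{equal_orders}, is that for each $\eta$ the order $<\restriction Suc_{A^f}(\eta)$ depends only on $\eta_3(m-1)$ (the ``$\alpha$-level'' coordinate) and is always isomorphic to $I$; since $\Phi$ respects the tree structure and the coordinates $\overline{\eta}$ that determine $\eta_3$, the induced orders on corresponding successor sets of $A^f$ and $A^g$ are built by the \emph{same} recipe $\tilde{\mathcal{G}}_\theta$. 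Concretely, I would define $\Psi$ by recursion on the levels $T_f(n)$: at each node, $\Phi$ identifies the coloured-tree coordinates $\overline{\eta}$, and the remaining coordinates $(\xi_1,\xi_2,\xi_3)$ are governed by $\tilde{\mathcal{G}}_\theta$ with the \emph{same} $\theta$, so $\Psi$ can be taken to act as the identity on those auxiliary coordinates while acting as $\Phi$ on the $J$-part. One must check $\Psi$ maps $Suc_{A^f}(\eta)$ order-isomorphically onto $Suc_{A^g}(\Psi(\eta))$ (immediate from Remark \ref{equal_orders}) and preserves the colour function $d_f$: here $d_f(\eta)=c_f(\overline{\eta})$ when $s_1(\eta)<s_\omega(\eta)$ and $d_f(\eta)=f(s_1(\eta))$ otherwise, and in the second regime one uses precisely that $\Phi$ is a coloured-tree isomorphism (so $c_f$-colours match) together with $f\ =^\beta_\omega\ g$ on the relevant cofinal-$\omega$ points.

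For the converse, I would argue contrapositively: if $f\ \neq^\beta_\omega\ g$, then the set $S=\{\alpha<\kappa\mid f(\alpha)\neq g(\alpha)\}\cap\{\alpha\mid \cf(\alpha)=\omega\}$ is stationary, and I must produce a colour mismatch no tree isomorphism can repair. Any isomorphism $\Psi\colon A^f\to A^g$ restricts to a tree isomorphism of the $\overline{\cdot}$-projections, hence (via the canonical embedding into $I^{\le\omega}$) induces on a club of $\delta$ a correspondence between branches of $A^f$ and $A^g$ reaching ``height $\delta$'' in the $s_1$-coordinate. Using the remark following Definition \ref{colorconst}—that for every $\delta$ with $\cf(\delta)=\omega$ there is $\eta\in J_f$ with $rng(\eta_1)=\omega$ and $\eta_5$ cofinal to $\delta$, so that $c_f(\eta)=f(\delta)$—together with the analogous $A^f$-branches whose colour in the second regime equals $f(s_1(\eta))=f(\delta)$, I would pin down a stationary set of $\delta\in S$ at which a branch of $A^f$ must be sent to a branch of $A^g$ with $d_f$-colour $f(\delta)\neq g(\delta)=d_g$-colour, contradicting colour preservation. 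This is the standard ``pressing down / stationary reflection'' style argument underlying Fact \ref{colorisom}, adapted through the order layer.

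I expect the main obstacle to be the converse direction, specifically verifying that the superimposed linear orders $<\restriction Suc_{A^f}(\eta)$ do not create ``extra'' branches or identifications that would let an isomorphism of $A^f$ with $A^g$ exist even when $J_f\not\cong J_g$—in other words, confirming that an ordered-coloured-tree isomorphism genuinely forces a coloured-tree isomorphism on the $\overline{\cdot}$-projection. The cleanest route, which I would pursue, is to note that by Remark \ref{equal_orders} every successor order is a copy of the \emph{same} rigid-enough order $I$ determined only by $\eta_3(m-1)$, and that the defining clauses (1)--(5) of Definition \ref{Gammas} make $\xi_2,\xi_3$ (and thus the recipe selecting the order) recoverable from $\overline{\xi}\restriction n+1$; hence any $A$-isomorphism must respect the $J$-coordinates level-by-level and therefore projects to a $J$-isomorphism, reducing the converse entirely to Fact \ref{colorisom}. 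Making this projection argument rigorous—ensuring the order data carries no independent information that could substitute for a genuine $J$-isomorphism—is the crux, after which both directions close by appeal to Fact \ref{colorisom}.
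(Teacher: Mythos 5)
Your forward direction has a genuine gap: you assume the coloured-tree isomorphism $\Phi\colon J_f\to J_g$ given by Fact \ref{colorisom} can be lifted coordinate-wise (``identity on the auxiliary coordinates, $\Phi$ on the $J$-part''), on the grounds that corresponding successor sets are ``built by the same recipe $\tilde{\mathcal{G}}_\theta$.'' But $\Phi$ is an \emph{abstract} isomorphism: it need not preserve the filtration $J_f^\alpha$, hence need not preserve the level coordinate $\xi_3$ nor the enumeration coordinate $\xi_2$ that Definition \ref{Gammas} attaches to a node. Consequently the image of a successor of $\eta$ lies in a block $(\theta_2',\theta_3')$ generally different from $(\theta_2,\theta_3)$, and the orders on $Suc_{A^f}(\eta)$ and $Suc_{A^g}(\cdot)$ are pulled back from $I$ through the \emph{arbitrary} bijections $\tilde{G}_\theta$ and $\tilde{H}_\alpha$, which send distinct blocks to distinct colour classes of $F$; a map fixing $\xi_1$ and recomputing $(\xi_2,\xi_3)$ is therefore not order-preserving (indeed the fibers of $F$ carry no compatible identification). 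This is exactly why the paper does \emph{not} lift $\Phi$ pointwise but instead runs a back-and-forth: it builds an increasing continuous sequence $\{\alpha_i\}$ interleaved with a club $C$ on whose $\omega$-cofinal points $f=g$, together with partial order-isomorphisms $F_i$ satisfying $\overline{F_i(\eta)}=G(\overline{\eta})$, and each extension step is made possible by property $(\ast)$ --- the whole point of $\kappa$-colorability, namely that in each successor set every realized bs-type over a small set is realized $\kappa$ often \emph{inside every prescribed block}. Your proposal never invokes $(\ast)$, which is the engine of the proof; without it the forward direction does not close.

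Your converse also leans on a false claim. You propose to show that any ordered-coloured-tree isomorphism ``respects the $J$-coordinates level-by-level and therefore projects to a $J$-isomorphism,'' justified by the recoverability of $\xi_2,\xi_3$ from $\overline{\xi}$. Recoverability only says the projection $\eta\mapsto\overline{\eta}$ is well defined on $A^f$; it does not force an isomorphism $A^f\to A^g$ to respect its fibers. Inside a successor set the order is just a copy of $I$, and the block decomposition is not an isomorphism-invariant feature of that order, so an isomorphism can shuffle the $\kappa$ many $\xi_1$-indexed copies across different blocks and does not descend to a map $J_f\to J_g$ at all. The paper avoids any such projection: from an isomorphism $F$ it forms $C=\{\alpha<\kappa\mid F[A^f_\alpha]=A^g_\alpha\}$, proves $C$ is $\omega$-closed unbounded, and at $\omega$-cofinal $\alpha\in C$ reads off $f(\alpha)=g(\alpha)$ from the colours $d_f,d_g$ of the branches that are new at level $\alpha$ (formally via the functions $H_f,H_g$ and $H_f\ =^\beta_\omega\ H_g$). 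Your first sketch of the converse --- the pressing-down argument on branch colours --- is the right idea and is essentially what the paper does; the ``cleanest route'' you say you would actually pursue is a dead end.
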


\begin{proof}
For every $f\in \beta^\kappa$ let us define the $\kappa$-representation $\mathbb{A}^f=\langle A^f_\alpha \mid \alpha<\kappa\rangle$ of $A^f$, 
$$
A^f_\alpha=\{\eta\in A^f\mid rng(\eta)\subseteq \theta\times\omega\times\theta\times\omega\times\theta^4\text{ for some }\theta<\alpha\}.
$$

Let $f$ and $g$ be such that $f\ =^\beta_\omega\ g$, there is $G$ a coloured trees isomorphism between $J_f$ and $J_g$. Let $C\subseteq \kappa$ be a club such that $\{\alpha\in C\mid cf(\alpha)=\omega\}\subseteq \{\alpha<\kappa\mid f(\alpha)=g(\alpha)\}$. 
We will show that there are sequences $\{\alpha_i\}_{i<\kappa}$ and $\{F_i\}_{i<\kappa}$ with the following properties:
\begin{itemize}
\item $\{\alpha_i\}_{i<\kappa}$ is a club;
\item if $i$ is a successor, then there is $\theta\in C$ such that $\alpha_{i-1}<\theta<\alpha_i$;
\item if $i=\gamma+n$ and $n$ is odd, $F_i$ is a partial isomorphism between $A^f$ and $A^g$, and $A_{\alpha_i}^f\subseteq dom(F_i)$;
\item if $i=\gamma+n$ and $n$ is even, $F_i$ is a partial isomorphism between $A^f$ and $A^g$, and $A_{\alpha_i}^g\subseteq rng(F_i)$;
\item if $i$ is limit, then $F_i:A_{\alpha_i}^f\rightarrow A_{\alpha_i}^g$;
\item if $i<j$, then $F_i\subseteq Fj$;
\item for all $\eta\in dom(F_i)$, $G(\overline{\eta})=\overline{F_i(\eta)}$.
\end{itemize}

We will proceed by induction over $i$, for the case $i=0$, let $\alpha_0=0$ and $F_0(\emptyset)=\emptyset$. Suppose $i=\gamma+n$ with $n$ even is such that $F_i$ is a partial isomorphism, $A_{\alpha_i}^g\subseteq rng(F_i)$ for all $j<i$, $F_j\subseteq F_i$, and $G(\overline{\eta})=\overline{F_i(\eta)}$ for all $\eta\in dom(F_i)$.

Let us choose $\alpha_{i+1}$ be a successor ordinal such that $\alpha_{i}<\theta<\alpha_{i+1}$ holds for some $\theta\in C$ and enumerate $A^f_{\alpha_{i}}$ by $\{\eta_j\mid j<\Omega\}$ for some $\Omega<\kappa$. Denote by $B_j$ the set $\{x\in A^f_{\alpha_{i+1}}\backslash dom(F_i)\mid \eta_j\prec x\}$. 

By the induction hypothesis, we know that for all $j<\Omega$, $x\in B_j$, $\overline{F_i(\eta_j)}\prec G(\overline{x})$. By Remark \ref{equal_orders}, for all $\eta\in A^f$ and $\xi\in A^g$, $<\restriction Suc_{A^f}(\eta)$ and $<\restriction Suc_{A^g}(\xi)$ are isomorphic. Thus, since $|A^f_{\alpha_{i}}|, |B_0|<\kappa$, by $(\ast)$ there is an embedding $F^0_i$ from $(A^f_{\alpha_i}\cup B_0,\prec,<)$ to $(A^g,\prec, <)$ that extends $F_i$ and for all $\eta\in dom(F^0_i)$, $\overline{F^0_i(\eta)}=G(\overline{\eta})$. 

For the case $B_j$ for $j>0$, let us suppose that $t<\Omega$ is such that the following hold:
\begin{itemize}
\item there is a sequence of embeddings $\{F_i^j\mid j<t\}$, where $F^j_i$ is an embedding from $(A^f_{\alpha_i}\cup \bigcup_{l\leq j}B_l,\prec,<)$ into $A^g$,
\item $F_i^l\subseteq F_i^j$ holds for all $l<j<t$,
\item for all $\eta\in dom(F^j_i)$, $\overline{F^j_i(\eta)}=G(\overline{\eta})$.
\end{itemize}

Since $|A^f_{\alpha_{i}}\cup \bigcup_{j< t}B_j|, |B_t|<\kappa$, by $(\ast)$ there is an embedding $F^t_i$ from $(A^f_{\alpha_i}\cup \bigcup_{j\leq t}B_j,\prec,<)$ to $(A^g,\prec, <)$ that extends $\bigcup_{j<t} F_i^j$ and for all $\eta\in dom(F^t_i)$, $\overline{F^t_i(\eta)}=G(\overline{\eta})$. 

Finally if $i$ is not a limit, then $F_{i+1}=\bigcup_{j<\Omega} F_i^j$ is as wanted. Otherwise, for all $x\in dom(\bigcup_{j<\Omega} F_i^j)$, $F_{i+1}(x)=\bigcup_{j<\Omega} F_i^j~(x)$ and for all $x\in A^f_{\alpha_{i+1}}\backslash A^f_{\alpha_i}$ such that for all $n<\omega$, $x\restriction n\in A^f_{\alpha_i}$; we define $F_{i+1}(x)$ be the unique $y\in A^g_{\alpha_{i+1}}$ such that for all $n<\omega$, $F_{i}(x\restriction n)\prec y$. Thus $F_{i+1}$ is as wanted.

The case $i=\gamma+n$ with $n$ odd is similar. For $i$ limit, we define $\alpha_i=\bigcup_{j<i} \alpha_j$ and $F_{\alpha_{i}}=\bigcup_{j<i} F_j$.

It is clear that $F=\bigcup_{j<\kappa} F_j$ witnesses that $A^f$ and $A^g$ are isomorphic as ordered trees. Let us show that $d_f(\eta)=d_g(F(\eta))$, suppose $\eta\in A^f$ is a leaf. Let $l$ be the least ordinal such that $\eta\in A^f_{\alpha_l}$. If there is $n<\omega$ such that for all $j<l$, $\eta\restriction n\notin A^f_{\alpha_j}$, then by the way $F$ was constructed, $d_f(\eta)=d_g(F(\eta))$. On the other hand, if for all $n<\omega$ there is $j<l$ such that $\eta\restriction n\in A^f_{\alpha_j}$, then there is an $\omega$-cofinal ordinal $i$ such that $s_\omega=\alpha_i$ and $i+1=l$. By the construction of $A^f$ we know that either $d_f(\eta)=f(s_1)$ (if $s_1=\alpha_i$) or $d_f(\eta)=c_f(\overline{\eta})$ (if $s_1<\alpha_i=s_8$). 
From the definition of $J_f$, if $\overline{\eta}\in (J_f)_\omega\backslash J_f^{\alpha_i}$ and for all $n<\omega$, $\overline{\eta}\restriction n\in J_f^{\alpha_i}$, then $c_f(\overline{\eta})=f(s_8)$. Therefore $d_f(\eta)=f(\alpha_i)$ holds in both cases ($s_1=s_\omega$ and $s_1<s_\omega$). By the same argument and using the definition of $F$, we can conclude that $d_g(F(\eta))=g(\alpha_i)$. Finally since $i$ is a limit ordinal with cofinality $\omega$, $\alpha_i$ is an $\omega$-limit of $C$. Thus $d_f(\eta)=f(\alpha_i)=g(\alpha_i)=d_g(F(\eta))$ and $F$ is a coloured tree isomorphism.

Now let us prove that if $A^f$ and $A^g$ are isomorphic ordered coloured trees, then $f~ =^\beta_\omega~ g$.

Let us start by defining  the following function $H_f\in \beta^\kappa$. For every $\alpha\in \kappa$ with cofinality $\omega$, define $B_\alpha=\{\eta\in A^f\backslash A^f_\alpha\mid dom(\eta)=\omega~\wedge~\forall n<\omega ~(\eta\restriction n\in A^f_\alpha)\}$. Notice that by the construction of $A^f$ and the definition of $A^f_\alpha$, for all $\eta\in B_\alpha$ we have $d_f(\eta)=f(s_\omega)=f(\alpha)$. Therefore, the value of $f(\alpha)$ can be obtained from $B_\alpha$ and $d_f$, and we can define the function $H_f\in \beta^\kappa$ as :

$$H_f(\alpha)=\begin{cases} f(\alpha) &\mbox{if } cf(\alpha)=\omega\\
0 & \mbox{in other case. } \end{cases}$$
This function can be obtained from the $\kappa$-representation $\{A^f_\alpha\}_{\alpha<\kappa}$ and $d_f$. It is clear that $f~ =^\beta_\omega~ H_f$.

\begin{claim}
If $A^f$ and $A^g$ are isomorphic ordered coloured trees, then $H_f~ =^\beta_\omega~ H_g$.
\end{claim}
\begin{proof}
Let $F$ be an ordered coloured tree isomorphism. It is easy to see that $\{F[A^f_{\alpha}]\}_{\alpha<\kappa}$ is a $\kappa$-representation. Define $C=\{\alpha<\kappa\mid F[A^f_\alpha]=A^g_\alpha\}$. Since $F$ is an isomorphism, for all $\alpha\in C$, $H_f(\alpha)=H_g(\alpha)$. Therefore it is enough to show that $C$ is $\omega$-closed and unbounded. By the definition of $\kappa$-representation, if $(\alpha_n)_{n<\omega}$ is a sequence of elements of $C$ cofinal to $\gamma$, then $A^g_\gamma=\bigcup_{n<\omega}A^g_{\alpha_n}=\bigcup_{n<\omega}F[A^f_{\alpha_n}]=F[A^f_\gamma]$. We conclude that $C$ is $\omega$-closed. 

Let us finish by showing that $C$ is unbounded. Fix an ordinal $\alpha<\kappa$, let us construct a sequence $(\alpha_n)_{n\leq\omega}$ such that $\alpha_\omega\in C$ and $\alpha_\omega>\alpha$. Define $\alpha_0=\alpha$. For every odd $n$, define $\alpha_{n+1}$ to be the least ordinal bigger than $\alpha_n$ such that $F[A^f_{\alpha_n}]\subseteq A^g_{\alpha+1}$.  For every even $n$, define $\alpha_{n+1}$ to be the least ordinal bigger than $\alpha_n$ such that $A^g_{\alpha_n}\subseteq F[A^f_{\alpha+1}]$. Define $\alpha_\omega=\bigcup_{n<\omega}\alpha_n$. Clearly $\bigcup_{i<\omega}F[A^f_{\alpha_{2i}}]=\bigcup_{i<\omega}A^g_{\alpha_{2i+1}}$. We conclude that $\alpha_\omega\in C$
\end{proof}
\end{proof}

Notice that the only property of $<\restriction Suc_{A^f}(\eta)$ that we used in the previous theorem was ($*$). Therefore, the previous theorem can be generalized to the following corollary.

\begin{cor}
Suppose $l$ is a $\kappa$-colorable linear order and $\beta\leq\kappa$. Then for any $f\in \beta^\kappa$, there is an ordered coloured tree $A^f(l)$ that satisfies: For all $f,g\in \beta^\kappa$,
$$f\ =^\beta_\omega\ g\ \Leftrightarrow A^f(l)\cong A^g(l).$$
\end{cor}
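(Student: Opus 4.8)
The plan is to carry out the construction of Subsection~\ref{section_ordered_colored_trees} verbatim, but with the $\kappa$-colorable linear order $l$ in place of the particular order $I$ built in Theorem~\ref{A_desire_order}. Indeed, the construction of the underlying ordered tree $A^f$ --- the sets $\Gamma^\alpha_\eta$, $\Gamma(\eta)$, the levels $T_f(n)$, and the color function $d_f$ --- never refers to $I$ at all; the order $I$ enters only when we define $<\restriction Suc_{A^f}(\eta)$. So first I would fix a $\kappa$-coloration $F_l\colon l\to\kappa$ witnessing that $l$ is $\kappa$-colorable, and define $A^f(l)$ to be the tree $A^f$ equipped with the successor orders induced by $l$ exactly as $\tilde{\mathcal{G}}_\theta$ induces them from $I$.

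The key point to check is that the bijections needed for this induced order exist. For this I would take $B=\emptyset$ in the definition of $\kappa$-colorable: the corresponding type is realized by every element of $l$, so $|F_l^{-1}[\alpha]|=\kappa$ for every $\alpha<\kappa$, and hence bijections $\tilde{H}_\alpha\colon F_l^{-1}[\alpha]\to\kappa$ and $\tilde{G}_\theta\colon\{(\theta_2,\theta_3)\mid\theta_3\ge\theta\}\to\kappa$ exist just as before. Using them I define $\tilde{\mathcal{G}}_\theta\colon\{(\theta_1,\theta_2,\theta_3)\mid\theta_3\ge\theta\}\to l$ and let $<\restriction Suc_{A^f(l)}(\eta)$ be the order this bijection induces. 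Each such successor order is then isomorphic to $l$, so the analogue of Remark~\ref{equal_orders} holds: all the orders $<\restriction Suc_{A^f(l)}(\eta)$ are mutually isomorphic (to $l$), and the construction depends only on $\eta_3(m-1)$.

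Next I would verify that $<\restriction Suc_{A^f(l)}(\eta)$ satisfies property $(\ast)$. This is literally the Fact proved after the definition of $\tilde{\mathcal{G}}_\theta$: given $B\subseteq Suc_{A^f(l)}(\eta)$ of size $<\kappa$, a type $p$ realized in $Suc_{A^f(l)}(\eta)$, and $(\theta_2,\theta_3)$ with $\theta_3\ge\eta_3(m-1)$, one transports $p$ along $\tilde{\mathcal{G}}_\theta$ to a type $q$ over $<\kappa$ many parameters of $l$, and applies the defining property of the $\kappa$-coloration $F_l$ to obtain $\kappa$ many witnesses $a\in l$ with $a\models q$ and $F_l(a)=\tilde{G}_\theta(\theta_2,\theta_3)$. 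Pulling these back yields $\kappa$ many $\gamma$ with $\eta^\frown(\gamma,\theta_2,\theta_3,\sigma^{\theta_3}_{\overline{\eta}}(\theta_2))\models p$. The argument is unchanged except that $l$ replaces $I$ throughout.

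Finally I would invoke Theorem~\ref{Afcong}. As the remark preceding this corollary observes, the only properties of the successor orders used in that proof are property $(\ast)$ and the fact (Remark~\ref{equal_orders}) that all successor orders are mutually isomorphic; both now hold for $A^f(l)$. Hence the forward-and-back construction of the isomorphism in the case $f=^\beta_\omega g$, and the argument from an isomorphism to $f=^\beta_\omega g$ via the functions $H_f$, go through verbatim, giving $f=^\beta_\omega g \Leftrightarrow A^f(l)\cong A^g(l)$. The main obstacle, such as it is, lies in confirming that nothing in Theorem~\ref{Afcong} used a special feature of $I$ beyond $(\ast)$ and the mutual isomorphism of successor orders --- in particular that the extension of partial isomorphisms across limit levels and the color computation $d_f(\eta)=f(\alpha_i)$ are insensitive to which $\kappa$-colorable order was used; once this bookkeeping is confirmed, the proof is complete.
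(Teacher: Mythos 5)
Your proposal is correct and follows essentially the same route as the paper: the paper's own justification for this corollary is precisely the observation (stated in the two remarks surrounding the corollary) that the construction of $A^f$ and the proof of Theorem~\ref{Afcong} use nothing about $I$ beyond $\kappa$-colorability and property $(\ast)$, so the construction may be repeated verbatim with $l$ in place of $I$. Your additional check that the color classes $F_l^{-1}[\alpha]$ have size $\kappa$ (via $B=\emptyset$) is a detail the paper leaves implicit, but it matches the paper's intent exactly.
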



\section{The Models}\label{Models_section}

\subsection{Generalized Ehrenfeucht-Mostowski models}

In this section we will use the generalized Ehrenfeucht-Mostowski models (see \cite{Sh90} Chapter VII. 2 or \cite{HT} Section 8) to construct the models of unsuperstable theories, we will use the previous constructed ordered coloured trees (from $I$) as the skeleton of the construction.

\begin{defn}[Generalized Ehrenfeucht-Mostowski models]\label{EM-models_def}
We say that a function $\Phi$ is proper for $K_{tr}^\gamma$, if there is a vocabulary $\mathcal{L}^1$ and for each $A\in K_{tr}^\gamma$, there is a model $\mathcal{M}_1$ and tuples $a_s$, $s\in A$, of elements of $\mathcal{M}_1$ such that the following two hold:
\begin{itemize}
\item every element of $\mathcal{M}_1$ is an interpretation of some $\mu(a_s)$, where $\mu$ is a $\mathcal{L}^1$-term;
\item $tp_{at}(a_s, \emptyset, \mathcal{M}_1)=\Phi(tp_{at}(s, \emptyset, A))$.
\end{itemize}
Notice that for each $A$, the previous conditions determine $\mathcal{M}_1$ up to isomorphism. We may assume $\mathcal{M}_1$, $a_s$, $s\in A$, are unique for each $A$. We denote $\mathcal{M}_1$ by $EM^1(A,\Phi)$. We call $EM^1(A,\Phi)$ an \textit{Ehrenfeucht-Mostowski model}.
\end{defn}

Suppose $T$ is a countable complete theory in a countable vocabulary $\mathcal{L}$, $\mathcal{L}^1$ a Skolemization of $\mathcal{L}$, and $T^1$ the Skolemization of $T$ by $\mathcal{L}^1$. If there is $\Phi$ a proper function for $K_{tr}^\lambda$, then for every $A\in K_{tr}^\gamma$, we will denote by EM$(A,\Phi)$ the $\mathcal{L}$-reduction of $EM^1(A,\Phi)$. 
The following result ensure the existence  of a proper function $\Phi$ for unsuperstable theories $T$ and $\gamma=\omega$.

\begin{fact}[Shelah, \cite{Sh} Theorem 1.3, proof in \cite{Sh90} Chapter VII 3]
Suppose $\mathcal{L}\subseteq \mathcal{L}^1$ are vocabularies, $T$ is a complete first order theory in $\mathcal{L}$, $T^1$ is a complete theory in $\mathcal{L}^1$ extending $T$ and with Skolem-functions. Suppose $T$ is unsuperstable and $\{\phi_n(x,y_n)\mid n<\omega\}$ witnesses this. Then there is a function $\Phi$ proper such that for all $A\in K_{tr}^\omega$, $EM^1(A,\Phi)$ is a model of $T^1$, and for $s\in P^A_n$, $t\in P^A_\omega$, $EM^1(A,\Phi)\models \phi_n(a_t,a_s)$ if and only if $A\models s\prec t$.
\end{fact}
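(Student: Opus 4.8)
The plan is to construct $\Phi$ as the functor that sends the quantifier-free type of a tuple in an index tree to the atomic type of a corresponding tuple of tree-indexed indiscernibles living inside a fixed model of $T^1$. The entire content of the statement lies in producing, once and for all, such a family of indiscernibles. Concretely, I would fix a large saturated $\mathfrak{C}^1\models T^1$; since $\mathcal{L}^1$ carries Skolem functions, any subset generates an elementary submodel, and $EM^1(A,\Phi)$ will be the Skolem hull of the tuples $\{a_s\mid s\in A\}$. Once the indiscernibles are in hand, I set $\Phi\big(tp_{at}(s,\emptyset,A)\big):=tp_{at}(a_s,\emptyset,\mathfrak{C}^1)$, which is well defined exactly when the indiscernibles are indexed in a way that makes the atomic type of $(a_{s_1},\dots,a_{s_k})$ depend only on the quantifier-free type of $(s_1,\dots,s_k)$ in the index tree.

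First I would use unsuperstability. That $T$ is unsuperstable, witnessed by $\{\phi_n(x,y_n)\mid n<\omega\}$, means (by the standard characterization via $\kappa(T)>\aleph_0$) that there are parameters $(b_\eta)_{\eta\in\omega^{<\omega}}$ in the monster model such that for every branch $\nu\in\omega^\omega$ the partial type $\{\phi_n(x,b_{\nu\restriction n})\mid n<\omega\}$ is consistent, while at each branching node the formulas $\phi_n$ along incompatible successors become pairwise inconsistent. Reindexing by levels, this produces a tree of parameters $(a_s)_s$ together with leaf elements $a_t$ realizing the branch types, and the branching incompatibility guarantees that for a leaf $t$ one has $\models\phi_n(a_t,a_s)$ \emph{precisely} when $s=t\restriction n$. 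This is exactly the biconditional $\phi_n(a_t,a_s)\iff s\prec t$ demanded for $s\in P_n^A$, $t\in P_\omega^A$.

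The second step is extraction of indiscernibility, since the raw tree of parameters need not be $K_{tr}^\omega$-indiscernible. I would apply the tree modelling property for the class $K_{tr}^\omega$: by an Erd\H{o}s--Rado argument combined with compactness, stretch and homogenize the tree to obtain $(a_\eta)_\eta$, indexed by a sufficiently universal ordered tree such as $\omega^{\leq\omega}$, whose atomic configuration type depends only on the quantifier-free type of the index tuple. With these indiscernibles the three clauses follow: every element of $EM^1(A,\Phi)$ is $\mu(a_s)$ for an $\mathcal{L}^1$-term $\mu$ because $EM^1(A,\Phi)$ is by definition the Skolem hull; the identity $tp_{at}(a_s,\emptyset,EM^1(A,\Phi))=\Phi(tp_{at}(s,\emptyset,A))$ is just the definition of $\Phi$; and $EM^1(A,\Phi)\models T^1$ holds because every finite subconfiguration embeds into $\mathfrak{C}^1\models T^1$, so all $T^1$-sentences are preserved. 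The clause on $\phi_n$ is then read off from the preserved witness property of the indiscernibles.

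The hard part will be the extraction step: arranging $K_{tr}^\omega$-indiscernibility \emph{without destroying} the exact correspondence between the $\phi_n$ and the initial-segment relation. One must choose the Ramsey-type colouring to refine simultaneously the atomic type of the index tuple and the pattern of satisfied instances of the $\phi_n$; what makes these compatible is precisely that, by the unsuperstability witnesses, the latter pattern is already determined by the former configuration. This is the delicate combinatorial heart of Shelah's argument in \cite{Sh90}, and it is where the countability of $\mathcal{L}$ and the choice $\gamma=\omega$ keep the extraction tractable.
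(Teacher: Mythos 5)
This statement appears in the paper only as a quoted Fact, with the proof deferred to Shelah's \emph{Classification Theory} (Chapter VII.3), so there is no in-paper proof to compare against; your outline --- unsuperstability witnesses arranged as a tree of parameters, extraction of $K_{tr}^\omega$-indexed indiscernibles via Erd\H{o}s--Rado plus compactness, and the template/Skolem-hull definition of $\Phi$ --- is exactly the standard argument from that cited source. One caveat: the ``only if'' direction of $\phi_n(a_t,a_s)\iff s\prec t$ is not guaranteed by branching incompatibility alone (that only handles siblings of $t\restriction n$); it follows \emph{after} the extraction step, since if $\phi_n(a_t,a_s)$ held for some off-branch $s\in P_n^A$, tree-indiscernibility would transfer it to infinitely many pairwise-contradictory siblings of $s$, which is the propagation argument your last paragraph gestures at but should make explicit.
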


The models that we will construct are of the form $EM(A,\Phi)$. 

\subsection{ Reduction of the Isomorphism Relation}

Before we deal with the construction of the models and the reduction, we need to do some preparations.

\begin{defn}
For any $A\in K^\omega_{tr}$ with size $\kappa$ and $\mathbb{A}$ a $\kappa$-representation of $A$, we define $S(\mathbb{A})$ as the set $$\{\delta<\kappa\mid \delta\text{ a limit ordinal, } \exists \eta\in P^A_\omega, \{\eta\restriction n\mid n<\omega\}\subseteq A_\delta\ \wedge\ \forall \alpha<\delta (\{\eta\restriction n\mid n<\omega\}\not\subseteq A_\alpha)\}$$
\end{defn}

\begin{fact}[Shelah, \cite{Sh} Fact 2.3, Hyttinen-Tuuri, \cite{HT} Lemma 8.6,]\label{fact2.8}
$S$ is a $CUB$-invariant function.
\end{fact}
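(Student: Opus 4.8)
The plan is to verify the two clauses in the definition of $CUB$-invariance. The domain of $S$ is the class of $\kappa$-representations of models in $K^\omega_{tr}$ of size $\kappa$, so the first clause holds by construction; the content is the second clause: whenever $A_1\cong A_2$ in $K^\omega_{tr}$, with $\kappa$-representations $\mathbb{A}_1=\langle (A_1)_\alpha\mid\alpha<\kappa\rangle$ and $\mathbb{A}_2=\langle (A_2)_\alpha\mid\alpha<\kappa\rangle$, one must show $S(\mathbb{A}_1)\ =^2_{CUB}\ S(\mathbb{A}_2)$, i.e.\ that the symmetric difference $S(\mathbb{A}_1)\symdiff S(\mathbb{A}_2)$ is non-stationary.

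I would first isolate the following as a lemma: any two $\kappa$-representations $\langle A_\alpha\mid\alpha<\kappa\rangle$ and $\langle B_\alpha\mid\alpha<\kappa\rangle$ of the \emph{same} set agree on a club. Setting $C=\{\delta<\kappa\mid A_\delta=B_\delta\}$, closure of $C$ is immediate from continuity of the representations, and unboundedness follows from a standard back-and-forth argument: given $\gamma_0$, since $\kappa$ is regular and each term has size $<\kappa$, one alternately absorbs $A_{\gamma_{2i}}$ into some $B_{\gamma_{2i+1}}$ and $B_{\gamma_{2i+1}}$ into some $A_{\gamma_{2i+2}}$, and takes $\delta=\sup_i\gamma_i\in C$.

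Next I would push $\mathbb{A}_1$ forward along an isomorphism $\phi\colon A_1\to A_2$: the sequence $\langle \phi[(A_1)_\alpha]\mid\alpha<\kappa\rangle$ is a $\kappa$-representation of $A_2$, and I claim $S(\langle \phi[(A_1)_\alpha]\rangle)=S(\mathbb{A}_1)$ on the nose. Indeed $\phi$ preserves $\prec$ and the level predicates $P_n$, hence maps $P^{A_1}_\omega$ bijectively onto $P^{A_2}_\omega$ and satisfies $\phi(\eta\restriction n)=\phi(\eta)\restriction n$; by injectivity, $\{\eta\restriction n\mid n<\omega\}\subseteq (A_1)_\gamma$ iff $\{\phi(\eta)\restriction n\mid n<\omega\}\subseteq \phi[(A_1)_\gamma]$ for every $\gamma$, so the defining condition for membership in $S$ (both the containment-at-$\delta$ clause and the minimality clause $\forall\alpha<\delta$) transfers verbatim between $\eta$ and $\phi(\eta)$. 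Thus the two $S$-sets are literally equal.

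Finally I would combine these. Applying the first lemma to the two representations $\langle \phi[(A_1)_\alpha]\rangle$ and $\mathbb{A}_2$ of $A_2$ yields a club $C$ with $\phi[(A_1)_\delta]=(A_2)_\delta$ for $\delta\in C$, and I would show $S(\langle \phi[(A_1)_\alpha]\rangle)\cap\acc(C)=S(\mathbb{A}_2)\cap\acc(C)$. For $\delta\in\acc(C)$ the containment-at-$\delta$ clauses coincide because the $\delta$-th terms are equal; the delicate point is the minimality clause $\forall\alpha<\delta$, since below $\delta$ the two representations need not agree off $C$. This is handled using that $\delta$ is a limit point of $C$: any $\alpha<\delta$ is dominated by some $\alpha'\in C$ with $\alpha'<\delta$, and by monotonicity, if $\{\eta\restriction n\mid n<\omega\}$ lies in the $\alpha$-th term of either representation it lies in its $\alpha'$-th term, where the two agree, so the existence of a witnessing $\alpha<\delta$ is equivalent for both. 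Hence $S(\langle \phi[(A_1)_\alpha]\rangle)\symdiff S(\mathbb{A}_2)$ is disjoint from the club $\acc(C)$, and together with $S(\langle \phi[(A_1)_\alpha]\rangle)=S(\mathbb{A}_1)$ this gives $S(\mathbb{A}_1)\ =^2_{CUB}\ S(\mathbb{A}_2)$. I expect this last step to be the main obstacle: transferring the minimality clause across representations that agree only on a club is what forces the passage from $C$ to $\acc(C)$ and the monotonicity argument; the unboundedness back-and-forth and the verbatim transfer under $\phi$ are routine.
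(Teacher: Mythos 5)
Your argument is correct, but there is nothing in the paper to compare it against: the paper does not prove this statement at all, importing it as a Fact from Shelah (\cite{Sh}, Fact 2.3) and Hyttinen--Tuuri (\cite{HT}, Lemma 8.6). As a self-contained proof, yours is complete and is structured the way one would expect: (i) any two $\kappa$-representations of the same set agree on a club (your back-and-forth catch-up argument is exactly the device the paper itself uses later, e.g.\ to prove unboundedness of $C=\{\alpha<\kappa\mid F[A^f_\alpha]=A^g_\alpha\}$ in the proof of Theorem \ref{Afcong}); (ii) $S$ transfers verbatim along an isomorphism, since an isomorphism in $K^\omega_{tr}$ preserves $P_\omega$ and commutes with the restriction maps $\eta\mapsto\eta\restriction n$; (iii) the two $S$-sets agree on $\acc(C)$. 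You are right that step (iii) is the only delicate point, and the passage from $C$ to $\acc(C)$ is genuinely necessary, not an artifact of your argument: membership in $S$ can really differ at a point $\delta\in C\setminus\acc(C)$, since a branch may enter one representation at stage $0$ and the other only at stage $\delta$ while the cumulative sets still equalize at $\delta$; your monotonicity argument via an intermediate $\alpha'\in C\cap\delta$ is exactly what rules this out at accumulation points. One further observation your proof makes visible: unlike the $CUB$-invariance of $Sp_{bs}$, which (per the Remark following its definition) relies on the cardinal assumption $\gamma^\omega<\kappa$ for all $\gamma<\kappa$, the invariance of $S$ needs nothing beyond regularity of $\kappa$, since your three steps are purely structural and order-theoretic.
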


This fact allows us to define $S(A)$ for $A\in K_{tr}^\omega$ as $\big[ S(\mathbb{A})\big]_{=^2_{CUB}}$ for any $\mathbb{A}$ $\kappa$-representation of $A$.

Clearly for any of the ordered coloured trees $A^f$, the ones constructed in Section \ref{section_ordered_colored_trees}, we have that $S(A^f)$ is the set of limit ordinals. This is because all the branches of $A^f$ have order type $\omega+1$. This can be fixed easily by restricting ourselves to the generalized Cantor space $f\in 2^\kappa$.

\begin{defn}\label{the_models}
Let $I$ be the $(<\kappa, bs)$-stable $(\kappa, bs,bs)$-nice $\kappa$-colorable linear order from Section \ref{section_colorable_linear_order}. For every $f\in 2^\kappa$, let $A^f$ be the tree constructed in Section \ref{section_ordered_colored_trees}. 
Define the tree $A_f\subseteq A^f$ by: $x\in A_f$ if and only if $x$ is not a leaf of $A^f$ or $x$ is a  leaf such that $d_f(x)=1$.
Denote by $\mathcal{A}^f$ the model EM$(A_f,\Phi)$.
\end{defn}

Since $I$ is $(\kappa, bs,bs)$-nice, by Lemma \ref{nonzero} the trees $A_f$ are locally $(\kappa, bs,bs)$-nice. Notice that since the branches of the trees $A_f$ have length at most $\omega+1$ and $I$ is $(<\kappa, bs)$-stable, then the trees $A_f$ are $(<\kappa, bs)$-stable.

By the way the models EM$(A,\Phi)$ were define, we know that if $A,A'\in K_{tr}^\omega$ are isomorphic, then  EM$(A,\Phi)$ and EM$(A',\Phi)$ are isomorphic. Thus if $A_f$ and $A_g$ are isomorphic, then $\mathcal{A}^f$ and $\mathcal{A}^g$ are isomorphic.
 
Notice that since we are working under the assumption $\kappa$ is an uncountable cardinal satisfying $\kappa^{<\kappa}=\kappa$, $\kappa>|\mathcal{L}^1|$.

\begin{lemma}\label{spectra}
For every $f,g\in 2^\kappa$:

$$f\ =^2_\omega\ g \text{ if and only if }S(A_f)=S(A_g).$$
\end{lemma}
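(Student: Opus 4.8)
The plan is to compute $S(\mathbb{A}_f)$ explicitly, up to the club filter, and to read off both implications from that computation. Fix the natural $\kappa$-representation $\mathbb{A}_f=\langle A^f_\alpha\cap A_f\mid\alpha<\kappa\rangle$. Since every proper initial segment of a leaf is an internal node of $A^f$, it is never discarded in passing to $A_f$; hence for a colour-$1$ leaf $\eta$ the creation point computed inside $A_f$ agrees with the one computed inside $A^f$, and when it is a limit it equals $s_\omega(\eta)$ and has cofinality $\omega$. Thus $S(\mathbb{A}_f)$ is exactly the set of $\omega$-cofinal limits $\delta$ arising as $s_\omega(\eta)$ for some leaf $\eta$ with $d_f(\eta)=1$.

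The heart of the argument is the claim that there is a club $C\subseteq\kappa$ such that for every $\delta\in C$ with $\cf(\delta)=\omega$ one has $\delta\in S(\mathbb{A}_f)$ if and only if $f(\delta)=1$. The inclusion $\supseteq$ is the easy half and needs no club: given such a $\delta$, I would run the construction from the remark following Definition~\ref{colorconst} (equivalently, the one in the proof of Lemma~\ref{nonzero}) to produce a leaf $\eta$ with $s_\omega(\eta)=\delta$, and then by the computation of $d_f$ carried out inside the proof of Theorem~\ref{Afcong} one gets $d_f(\eta)=f(\delta)$; if $f(\delta)=1$ this $\eta$ lies in $A_f$ and witnesses $\delta\in S(\mathbb{A}_f)$. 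For the converse I must show that \emph{every} leaf $\eta$ with creation point $\delta\in C$ satisfies $d_f(\eta)=f(\delta)$, so that a colour-$1$ leaf created at $\delta$ forces $f(\delta)=1$.

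This converse is where the only real difficulty lies, and I expect it to be the main obstacle. By definition $d_f(\eta)=f(s_1(\eta))$ when $s_1(\eta)=s_\omega(\eta)$ and $d_f(\eta)=c_f(\overline\eta)$ otherwise; the danger is a leaf of the second kind whose colour is inherited from one of the fixed enumerated subtrees $Z^{\alpha,\theta}_\gamma$ rather than from $f$. I would rule this out by observing that such a leaf is eventually trapped in a single block and therefore has all its $J_f$-coordinates and its $\xi_3$-coordinate bounded below $\kappa$ (the branch stays inside some $R(\alpha,\theta,i)$ with $\theta<\kappa$). Consequently, if $\delta$ is large enough to be closed under the block-bound operations used in Definition~\ref{Gammas}, the only coordinate that can be cofinal in $\delta$ is the first, $I$-valued coordinate $\xi_1$, whence $s_1(\eta)=s_\omega(\eta)=\delta$ and $d_f(\eta)=f(\delta)$; letting $C$ be the club of ordinals closed under these operations gives the claim. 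The same bookkeeping shows that on infinitely-many-block leaves $c_f(\overline\eta)=f(s_8(\eta))=f(\delta)$, matching the proof of Theorem~\ref{Afcong}.

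With the claim in hand both directions are immediate. Applying it to $f$ and to $g$ over a common club $C$, the symmetric difference $S(\mathbb{A}_f)\symdiff S(\mathbb{A}_g)$ agrees on $C$ with $\{\delta\mid \cf(\delta)=\omega,\ f(\delta)\neq g(\delta)\}$; since $S(A_f)=S(A_g)$ means precisely $S(\mathbb{A}_f)\ =^2_{CUB}\ S(\mathbb{A}_g)$, i.e.\ that this symmetric difference is non-stationary, this is equivalent to $f\ =^2_\omega\ g$. Alternatively, the forward direction can be obtained without the coordinate analysis: by Theorem~\ref{Afcong} an equivalence $f\ =^2_\omega\ g$ yields a coloured-tree isomorphism $A^f\cong A^g$, which preserves colours and leaves and hence restricts to an isomorphism $A_f\cong A_g$ in $K^\omega_{tr}$, so that $S(A_f)=S(A_g)$ by the $CUB$-invariance of $S$, Fact~\ref{fact2.8}.
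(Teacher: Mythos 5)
Your proposal is correct and follows essentially the same route as the paper: the paper also fixes this very representation, identifies $S(\mathbb{A}_f)$ with the set of $\omega$-cofinal $\delta$ occurring as the (unattained) supremum $s_\omega(\eta)$ of a colour-$1$ leaf, and then uses the fact that any leaf created at such a $\delta$ satisfies $d_f(\eta)=f(\delta)$ to rewrite $S(\mathbb{A}_f)=\{\delta<\kappa\mid \cf(\delta)=\omega\wedge f(\delta)=1\}$. The only difference is that the coordinate bookkeeping you sketch (blocked leaves have all $J_f$-coordinates and the $\xi_3$-coordinate bounded by coordinate values already occurring along the branch, hence below $\delta$) actually works for every $\omega$-cofinal $\delta$, so your auxiliary club $C$ is superfluous and the paper obtains the exact equality rather than an equality modulo the club filter.
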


\begin{proof}
By Fact \ref{fact2.8}, $S$ is $CUB$-invariant, therefore it is enough to find a $\kappa$-representation $\mathbb{A}_f$ of $A_f$ for every $f\in 2^\kappa$, such that for all $f,g\in 2^\kappa$, $f\ =^2_\omega\ g$ if and only $\mathbb{A}_f\ =^2_{CUB}\ \mathbb{A}_g$.

Similar as in the proof of Theorem \ref{Afcong}, for all $f\in 2^\kappa$ let us define the $\kappa$-representation $\mathbb{A}_f=\langle A_{f,\alpha}\mid \alpha<\kappa \rangle$ by

$$
A_{f,\alpha}=\{\eta\in A_f\mid rng(\eta)\subseteq \theta\times\omega\times\theta\times\omega\times\theta^4\text{ for some }\theta<\alpha\}.
$$

By definition $$S(\mathbb{A}_f)=\{\delta<\kappa\mid \exists\eta\in P_\omega^{A_f}, \{\eta\restriction n\mid n<\omega\}\subseteq (A_{f,\delta}\ \&\ \forall \alpha<\delta(\{\eta\restriction n\mid n<\omega\}\not\subseteq A_{f,\alpha})\}.$$

\begin{claim}
$\delta\in S(\mathbb{A}_f)$ if and only if $cf(\delta)=\omega$ and for every $\eta\in P_\omega^{A_f}$ that witnesses it,  $sup(\{rng(\eta_i)\mid i\leq 8\})=\delta$ .
\end{claim}

\begin{proof}
The direction from right to left follows from Definition \ref{the_models}. The other direction follows from the definition of $S(\mathbb{A}_f)$ and $A_{f,\alpha}$.
\end{proof}

By the previous Claim we know that if $\delta\in S(\mathbb{A}_f)$ and $\eta\in  P_\omega^{A_f}$ witnesses it, then by the way $A^f$ was constructed, $1=d_f(\eta)=f(sup(rng(\eta_1), rng(\eta_8)))=f(\delta)$. Therefore we can rewrite $S(\mathbb{A}_f)$ as $$S(\mathbb{A}_f)=\{\delta<\kappa\mid cf(\delta)=\omega \wedge f(\delta)=1\}.$$
It follows that $S(\mathbb{A}_f)\ =^2_{CUB}\ S(\mathbb{A}_g)$ holds if and only if $f\ =^2_\omega\ g$.
\end{proof}

Now we proceed to prove that the models $\mathcal{A}^f$ are as wanted, i.e. $f\ =^2_\omega\ g$ if and only if $\mathcal{A}^f\ \cong_T \ \mathcal{A}^g$.

\begin{fact}[Shelah, \cite{Sh} Theorem 2.4]\label{SHmain}
Suppose $T$ is a countable complete unsuperstable theory in a countable vocabulary. If $\kappa$ is a regular uncountable cardinal, $A_1, A_2\in K_{tr}^\omega$ have size $\kappa$, $A_1$, $A_2$ are locally $(\kappa, bs,bs)$-nice and $(<\kappa, bs)$-stable, EM$(A_1,\Phi)$ is isomorphic to EM$(A_2,\Phi)$, then $S(A_1)=S(A_2)$.
\end{fact}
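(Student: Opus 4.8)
The plan is to prove the equivalent statement that $S(\mathbb{A}_1) =^2_{CUB} S(\mathbb{A}_2)$ for suitably chosen $\kappa$-representations $\mathbb{A}_1,\mathbb{A}_2$ of $A_1,A_2$; since $S$ is $CUB$-invariant (Fact \ref{fact2.8}), this is exactly $S(A_1)=S(A_2)$. By symmetry it is enough to show that $S(\mathbb{A}_1)\setminus S(\mathbb{A}_2)$ is nonstationary. First I would fix representations $\mathbb{A}_i=\langle A_{i,\alpha}\mid\alpha<\kappa\rangle$ and let them induce filtrations $\langle \mathcal{M}_{i,\alpha}\mid\alpha<\kappa\rangle$ of $\mathcal{M}_i=\mathrm{EM}(A_i,\Phi)$, where $\mathcal{M}_{i,\alpha}$ is the submodel generated by $\{a_s\mid s\in A_{i,\alpha}\}$. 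Because each $\mathcal{M}_{i,\alpha}$ has size $<\kappa$, because $\kappa=\kappa^{<\kappa}$, and because $F\colon\mathcal{M}_1\to\mathcal{M}_2$ is an isomorphism, a routine closing-off argument yields a club $C\subseteq\kappa$ such that for every $\delta\in C$ one has $F[\mathcal{M}_{1,\delta}]=\mathcal{M}_{2,\delta}$ and $\mathcal{M}_{i,\delta}=\bigcup_{\alpha<\delta}\mathcal{M}_{i,\alpha}$.

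Next I would fix $\delta\in C\cap S(\mathbb{A}_1)$; then $\cf(\delta)=\omega$ and there is a leaf $\eta\in P_\omega^{A_1}$ whose initial segments $\{\eta\restriction n\mid n<\omega\}$ lie in $A_{1,\delta}$ and appear at stages cofinal in $\delta$. By the defining property of the proper function $\Phi$, the generator $a_\eta$ satisfies $\mathcal{M}_1\models\phi_n(a_\eta,a_{\eta\restriction n})$ for all $n<\omega$, so $a_\eta$ realizes the \emph{tower type} $p=\{\phi_n(x,a_{\eta\restriction n})\mid n<\omega\}$. Applying $F$, the element $b=F(a_\eta)\in\mathcal{M}_{2,\delta}$ realizes $\{\phi_n(x,F(a_{\eta\restriction n}))\mid n<\omega\}$, where each $F(a_{\eta\restriction n})$ lies in some $\mathcal{M}_{2,\alpha_n}$ with $\alpha_n\uparrow\delta$.

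The heart of the argument is to recover from $b$ an actual branch of $A_2$ cofinal below $\delta$. Writing $b=\mu(a_{t_1},\dots,a_{t_k})$ as an $\mathcal{L}^1$-term in finitely many generators $t_1,\dots,t_k\in A_2$, I would analyze, level by level, which $t_j$ are $\prec$-comparable with the parameters witnessing each $\phi_n$. Here unsuperstability is decisive: the formulas $\{\phi_n\mid n<\omega\}$ form an infinite strictly descending chain, so realizing the whole tower forces a single sequence of tree elements $s_n\in A_2$ with $s_n\prec s_{n+1}$, $s_n\in A_{2,\beta_n}$ for some $\beta_n\uparrow\delta$, extending to a leaf $\xi\in P_\omega^{A_2}$ with $\{\xi\restriction n\mid n<\omega\}\subseteq A_{2,\delta}$ cofinal in $\delta$. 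Local $(\kappa,bs,bs)$-niceness (infinite successor sets and the existence of leaves) guarantees such a $\xi$, while $(<\kappa,bs)$-stability bounds the number of relevant $bs$-types and is what lets the closing-off of the first step succeed. Since $F$ respects the filtration on $C$, the minimality of $\delta$ (no $\alpha<\delta$ captures all initial segments of the branch) transfers from $A_1$ to $A_2$, giving $\delta\in S(\mathbb{A}_2)$ as desired.

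The main obstacle is exactly this detection step: proving that an element of $\mathcal{M}_{2,\delta}$ realizing a $\phi_n$-tower cannot be assembled from generators lying along finitely many bounded branches but must follow one branch cofinal in $\delta$. This is where the nonsuperstability of $T$ --- the infinite regression encoded by the $\phi_n$ --- must be combined with the combinatorics of $K_{tr}^\omega$ through the $bs$-types of tree elements and local niceness; it is essentially Shelah's type analysis of Ehrenfeucht--Mostowski models over trees, with the remaining work being the club-and-filtration bookkeeping of the first two steps.
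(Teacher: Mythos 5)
First, a point of reference: the paper does not prove this statement at all --- it is quoted as a known result (Shelah, \cite{Sh} Theorem 2.4), so there is no internal proof to compare against, and your attempt has to be measured on its own merits against Shelah's argument. Your outer scaffolding is the standard one and is fine: reducing via Fact \ref{fact2.8} to showing $S(\mathbb{A}_1)\ =^2_{CUB}\ S(\mathbb{A}_2)$ for chosen representations, building the filtrations $\mathcal{M}_{i,\alpha}$ generated by $\{a_s\mid s\in A_{i,\alpha}\}$, and closing off to get a club $C$ with $F[\mathcal{M}_{1,\delta}]=\mathcal{M}_{2,\delta}$. Note, however, that this closing-off needs only that $\kappa$ is regular and the models have size $\kappa$; your remark that $(<\kappa,bs)$-stability ``is what lets the closing-off of the first step succeed'' misplaces where that hypothesis actually does its work, which is a symptom of the real problem below.

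The genuine gap is the step you yourself call ``the heart of the argument'' and then do not carry out. From $b=F(a_\eta)=\mu(a_{t_1},\dots,a_{t_k})\in\mathcal{M}_{2,\delta}$ realizing the tower $\{\phi_n(x,F(a_{\eta\restriction n}))\mid n<\omega\}$ with parameters appearing cofinally below $\delta$, you assert that ``realizing the whole tower forces a single sequence of tree elements $s_n\prec s_{n+1}$ \dots extending to a leaf'' cofinal in $\delta$. This is not a consequence of unsuperstability, nor of the $\phi_n$ forming a chain: nothing formal prevents $b$ from being a term in finitely many generators $a_{t_j}$ whose traces below $\delta$ are all bounded, with the formulas $\phi_n$ satisfied by the term $\mu$ rather than by a generator sitting on top of a cofinal branch. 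Ruling this out is precisely the content of Shelah's theorem, and it is where the two structural hypotheses enter: one shows that if $\delta\notin S(\mathbb{A}_2)$, then local $(\kappa,bs,bs)$-niceness of the successor orders (a non-splitting property) together with $(<\kappa,bs)$-stability forces the bs-type of every element of $\mathcal{M}_2$ over $\mathcal{M}_{2,\delta}$ to not (bs,bs)-split over some $\mathcal{M}_{2,\beta}$ with $\beta<\delta$, whereas an element realizing a tower whose parameters are cofinal in $\delta$ must split over every bounded stage --- a contradiction. Your sketch replaces this type-theoretic analysis with the phrase that it ``is essentially Shelah's type analysis,'' i.e., it assumes the very statement it set out to prove. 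As written, the proposal is a correct reduction of the Fact to its own core difficulty, not a proof of it.
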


\begin{lemma}\label{main}
If $T$ is a countable complete unsuperstable theory over a countable vocabulary, then for all $f,g\in 2^\kappa$, $f\ =^2_\omega\ g$ if and only if $\mathcal{A}^f$ and $\mathcal{A}^g$ are isomorphic.
\end{lemma}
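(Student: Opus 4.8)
The plan is to prove the two implications separately, reducing the forward direction to Theorem~\ref{Afcong} and the converse to Shelah's Fact~\ref{SHmain}, with Lemma~\ref{spectra} serving as the bridge between the combinatorics of the spectra and the relation $=^2_\omega$.

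For the forward direction, suppose $f\ =^2_\omega\ g$. Applying Theorem~\ref{Afcong} with $\beta=2$ yields an isomorphism $F\colon A^f\to A^g$ of ordered coloured trees; in particular $F$ preserves heights (so it sends leaves to leaves and non-leaves to non-leaves) and satisfies $d_f(\eta)=d_g(F(\eta))$ for every leaf $\eta\in P^{A^f}_\omega$. Since $A_f$ and $A_g$ were obtained in Definition~\ref{the_models} from $A^f$ and $A^g$ by discarding exactly the leaves of colour $0$, the colour-preservation clause forces $F$ to carry colour-$1$ leaves of $A^f$ onto colour-$1$ leaves of $A^g$, and hence to restrict to a bijection $F\restriction A_f$ of $A_f$ onto $A_g$ respecting $\prec$, $<$, the level predicates and $h$. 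Thus $A_f\cong A_g$ as ordered trees, and since the Ehrenfeucht--Mostowski construction depends only on the isomorphism type of the tree (as recorded after Definition~\ref{the_models}), we obtain $\mathcal{A}^f=\text{EM}(A_f,\Phi)\cong\text{EM}(A_g,\Phi)=\mathcal{A}^g$.

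For the converse, suppose $\mathcal{A}^f\cong\mathcal{A}^g$, i.e.\ $\text{EM}(A_f,\Phi)\cong\text{EM}(A_g,\Phi)$. The trees $A_f,A_g\in K^\omega_{tr}$ have size $\kappa$, are locally $(\kappa,bs,bs)$-nice (because $I$ is $(\kappa,bs,bs)$-nice and Lemma~\ref{nonzero} supplies a colour-$1$ leaf above every node, so the surviving tree still has every non-leaf below a branch) and are $(<\kappa,bs)$-stable (because $I$ is $(<\kappa,bs)$-stable and all branches have length at most $\omega+1$). These are precisely the hypotheses of Fact~\ref{SHmain}, which therefore gives $S(A_f)=S(A_g)$. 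An application of Lemma~\ref{spectra} then converts this equality of spectra into $f\ =^2_\omega\ g$, completing the proof.

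The genuinely substantive content is carried by the two imported results: Theorem~\ref{Afcong}, which makes the forward assignment $f\mapsto\mathcal{A}^f$ behave correctly, and Shelah's Fact~\ref{SHmain}, which is the deep model-theoretic obstruction underlying the converse. The step I expect to demand the most care is not either implication in isolation but the verification that the hypotheses of Fact~\ref{SHmain} apply to the \emph{restricted} trees $A_f$ rather than to the full trees $A^f$; this is exactly where Lemma~\ref{nonzero} is indispensable, since deleting the colour-$0$ leaves must leave intact both the requirement that every $\eta\in A_f\setminus P^{A_f}_\omega$ has infinitely many successors and the requirement that each such $\eta$ lies below some maximal branch of $A_f$.
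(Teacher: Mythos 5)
Your proof is correct and takes essentially the same route as the paper: the forward direction via Theorem~\ref{Afcong}, colour-preservation, Definition~\ref{the_models}, and the isomorphism-invariance of the Ehrenfeucht--Mostowski construction, and the converse via Fact~\ref{SHmain} followed by Lemma~\ref{spectra}. The hypothesis-verification for Fact~\ref{SHmain} on the restricted trees $A_f$ (local $(\kappa,bs,bs)$-niceness via Lemma~\ref{nonzero} and $(<\kappa,bs)$-stability), which you rightly single out as the delicate point, is exactly what the paper records in the remarks immediately following Definition~\ref{the_models}.
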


\begin{proof}
From left to right. Suppose $f,g\in 2^\kappa$ are such that $f\ =_\omega^2\ g$.
By Theorem \ref{Afcong} and Definition \ref{the_models} we know that $f\ =_\omega^2\ g$ if and only if $A_f\cong A_g$. Finally $A_f\cong A_g$ implies that $\mathcal{A}^f$ and $\mathcal{A}^g$ are isomorphic.

From right to left. Suppose $f,g\in 2^\kappa$ are such that $\mathcal{A}^f$ and $\mathcal{A}^g$ are isomorphic. By Definition \ref{the_models} and Fact \ref{SHmain}, $S(A_f)=S(A_g)$. From Lemma \ref{spectra} we conclude $f\ =^2_\omega\ g$.
\end{proof}

\begin{thm}\label{maincor}
If $T$ is a countable complete unsuperstable theory over a countable vocabulary, $\mathcal{L}$, then $=^2_\omega\ \reduc\ \cong_T$. 
\end{thm}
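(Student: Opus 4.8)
The plan is to upgrade the model assignment $f\mapsto\mathcal{A}^f$ of Definition \ref{the_models} into an honest continuous reduction between subsets of $\kappa^\kappa$. By Lemma \ref{main} we already know that for all $f,g\in 2^\kappa$ one has $f\ =^2_\omega\ g$ if and only if $\mathcal{A}^f\cong\mathcal{A}^g$, and by construction each $\mathcal{A}^f=\mathrm{EM}(A_f,\Phi)$ is the $\mathcal{L}$-reduct of a model of $T^1$, hence a model of $T$ of size $\kappa$. So all the model-theoretic content is in place; what remains is to code the models $\mathcal{A}^f$ as elements of $\kappa^\kappa$ in the sense of Definition \ref{struct}, by a function $\mathcal{F}\colon 2^\kappa\to\kappa^\kappa$ that is continuous and satisfies $\mathcal{A}_{\mathcal{F}(f)}\cong\mathcal{A}^f$ for every $f$. (We may assume $\mathcal{L}$ is relational, replacing function symbols by their graphs if necessary; this affects neither $T$ nor the isomorphism relation.) Granting such an $\mathcal{F}$, since $\mathcal{A}_{\mathcal{F}(f)}\models T$ always, the pair $(\mathcal{F}(f),\mathcal{F}(g))$ falls into the first clause of $\cong_T$, and the chain $f\ =^2_\omega\ g\iff\mathcal{A}^f\cong\mathcal{A}^g\iff\mathcal{A}_{\mathcal{F}(f)}\cong\mathcal{A}_{\mathcal{F}(g)}\iff(\mathcal{F}(f),\mathcal{F}(g))\in\cong_T$ witnesses $=^2_\omega\ \reduc\ \cong_T$.

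To build $\mathcal{F}$ I would fix, once and for all, an enumeration of the domain of the EM model that is coherent along the $\kappa$-representation $\mathbb{A}_f=\langle A_{f,\alpha}\mid\alpha<\kappa\rangle$ from Lemma \ref{spectra}. Every element of $\mathrm{EM}(A_f,\Phi)$ is the interpretation of a term $\mu(a_{s_1},\dots,a_{s_n})$ with $s_1,\dots,s_n\in A_f$, and by the defining property of $\Phi$ the atomic type of any tuple of such elements is determined by $\Phi$ applied to the atomic types $tp_{at}(\bar s,\emptyset,A_f)$ of the corresponding tuples in the tree. Using a fixed bijection between $\kappa$ and the set of codes for pairs (term, finite tuple of skeleton indices), I would let $\mathcal{F}(f)(\beta)$ record, via the pairing $\pi$ of Definition \ref{struct}, which relations hold among the elements coded by the data assigned to $\beta$. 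Then $\mathcal{A}_{\mathcal{F}(f)}$ is isomorphic to $\mathcal{A}^f$.

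The point making $\mathcal{F}$ continuous is locality of the construction in $f$. As recorded in Remark \ref{equal_trees}, $f\restriction\alpha=g\restriction\alpha$ implies $A^f_\alpha=A^g_\alpha$; moreover for a leaf $\eta\in A_{f,\alpha}$ the entire range of $\eta$ is bounded below $\alpha$, so $s_1(\eta),\dots,s_\omega(\eta)<\alpha$ and the colour $d_f(\eta)$ (which equals $f(s_1(\eta))$, or $c_f(\overline\eta)$, the latter being either $f(\sup\rng(\eta_5))$ or independent of $f$) depends only on $f\restriction\alpha$. Hence $f\restriction\alpha=g\restriction\alpha$ already forces $A_{f,\alpha}=A_{g,\alpha}$, colours included. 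I would arrange the enumeration so that the data assigned to any $\beta$ refers only to terms over skeleton elements lying in $A_{f,\alpha}$ for some $\alpha=\alpha(\beta)<\kappa$ chosen independently of $f$; then $\mathcal{F}(f)(\beta)$ is computed from the atomic diagram of $A_{f,\alpha}$ through $\Phi$, and so depends only on $f\restriction\alpha$. Consequently each coordinate $\mathcal{F}(f)(\beta)$ is determined by a bounded initial segment of $f$, which is exactly continuity in the bounded topology.

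The main obstacle is precisely the bookkeeping of the last paragraph: one must fix the enumeration of terms and tuples uniformly in $f$ and check that the assignment $\beta\mapsto\alpha(\beta)$ can be taken independent of $f$, so that bounded pieces of the output genuinely depend only on bounded pieces of the input. This is where Remark \ref{equal_trees}, together with the observation that $A_{f,\alpha}$ and its leaf colours are determined by $f\restriction\alpha$, does all the work; the model theory contributes nothing beyond Lemma \ref{main} and the Ehrenfeucht--Mostowski property of $\Phi$.
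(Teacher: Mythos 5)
Your proposal is correct and follows essentially the same route as the paper: both reduce the model theory to Lemma \ref{main} and then build a continuous coding of the models $\mathrm{EM}(A_f,\Phi)$ into the space of Definition \ref{struct}, using the coherence $f\restriction\alpha=g\restriction\alpha\Rightarrow A_{f,\alpha}=A_{g,\alpha}$ (Remark \ref{equal_trees}) so that bounded pieces of the output depend only on bounded pieces of the input. The paper implements your ``coherent enumeration'' bookkeeping via Skolem hulls $SH(\tilde{A}_{f,\alpha})$ and coherent bijections $E_f\colon\dom(\mathcal{M}^f)\rightarrow\kappa$ (citing \cite{Mor}), proving continuity pointwise rather than through your slightly stronger $f$-independent assignment $\beta\mapsto\alpha(\beta)$, but this is a difference of detail, not of method.
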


\begin{proof}
For every $f\in 2^\kappa$, we will construct a model $\mathcal{M}^f$ isomorphic to EM$(A_f,\Phi)$. We will also construct a function $\mathcal{G}:\{\mathcal{M}^f\mid f\in 2^\kappa\}\rightarrow 2^\kappa$, such that $\mathcal{A}_{\mathcal{G}(\mathcal{M}^f)}\cong \mathcal{M}^f$ and $f\mapsto \mathcal{G}(\mathcal{M}^f)$ is continuous. 
By Remark \ref{equal_trees}, Definition \ref{the_models}, and the definition of $A_{f,\alpha}$, 
$$f\restriction \alpha=g\restriction \alpha \Leftrightarrow A_{f,\alpha}=A_{g,\alpha}.$$ 
Let us denote by $SH(X)$ the Skolem-hull of $X$, i.e. $\{\mu(a)\mid a\in X, \mu\text{ an }\mathcal{L}^1 \text{-term}\}$. For all $\alpha$, $A\in K^\omega_{tr}$, and a $\kappa$-representation $\mathbb{A}=\langle A_\alpha\mid \alpha<\kappa  \rangle$ of $A$, let us denote by $\tilde{A}_\alpha$ the set $\{a_s\mid s\in A_\alpha\}$, recall the construction of $EM^1(A,\Phi)$ in Definition  \ref{EM-models_def}. 
Since for all $\alpha<\kappa$, $$A_{f,\alpha}=A_{g,\alpha} \Leftrightarrow SH(\tilde{A}_{f,\alpha}) = SH(\tilde{A}_{g,\alpha}),$$
we can construct for all $f$ a tuple $(\mathcal{M}^f, F_f)$, where $\mathcal{M}^f$ is a model isomorphic to EM$(A_f,\Phi)$ and $F_f:\mathcal{M}^f\rightarrow \text{ EM}(A_f,\Phi)$ is an isomorphism, that satisfies the following: denote by $\mathcal{M}^f_\alpha$ the preimage $F^{-1}_f[SH(\tilde{A}_{f,\alpha})\restriction \mathcal{L}]$ and $$f\restriction \alpha=g\restriction \alpha \Leftrightarrow \mathcal{M}^f_\alpha=\mathcal{M}^g_\alpha.$$
For every $f\in 2^\kappa$ there is a bijection $E_f: dom(\mathcal{M}^f)\rightarrow \kappa$, such that for every $f,g\in 2^\kappa$ and $\alpha<\kappa$ it holds that: If $f\restriction \alpha=g\restriction \alpha $, then $E_f\restriction dom(\mathcal{M}^f_\alpha)=E_g\restriction dom(\mathcal{M}^g_\alpha)$ (see \cite{Mor}). 
Let $\pi$ be the bijection in Definition \ref{struct}, define the function $\mathcal{G}$ by: $$\mathcal{G}(\mathcal{M}^f)(\alpha)=\begin{cases} 1 &\mbox{if } \alpha=\pi(m,a_1,a_2,\ldots,a_n) \text{ and }\\ & \mathcal{M}^{f}\models Q_m(E_f^{-1}(a_1),E_f^{-1}(a_2),\ldots,E_f^{-1}(a_n))\\
0 & \mbox{in other case. } \end{cases}$$
To show that $G:2^\kappa\rightarrow 2^\kappa$, $G(f)=\mathcal{G}(\mathcal{M}^f)$ is continuous, let $[\zeta\restriction \alpha]$ be a basic open set and $\xi\in G^{-1}[[\zeta\restriction \alpha]]$. There is $\beta<\kappa$ such that for all $\gamma<\alpha$, if $\gamma=\pi(m,a_1,\ldots, a_n)$, then $E^{-1}_\xi (a_i)\in dom(\mathcal{M}^\xi_\beta)$ holds for all $i\leq n$. Since for all $\eta\in [\xi\restriction\beta]$ it holds that $\mathcal{M}^\eta_\beta=\mathcal{M}^\xi_\beta$, for all $\gamma<\alpha$ that satisfies $\gamma=\pi(m,a_1,\ldots, a_n)$ $$\mathcal{M}^\eta\models Q_m(E_\eta^{-1}(a_1),E_\eta^{-1}(a_2),\ldots,E_\eta^{-1}(a_n))$$ if and only if $$\mathcal{M}^\xi\models Q_m(E_\xi^{-1}(a_1),E_\xi^{-1}(a_2),\ldots,E_\xi^{-1}(a_n)).$$
We conclude that $G$ is continuous.
\end{proof}

\subsection{Corollaries}

In this section we will prove Theorem A and Theorem B.

\begin{fact}[Hyttinen-Kulikov-Moreno, \cite{HKM} Lemma 2]\label{HKM1}
Assume $T$ is a countable complete classifiable theory over a countable vocabulary. If $\diamondsuit_\omega$ holds, then $\cong_T \ \reduc\ =^2_\omega$.
\end{fact}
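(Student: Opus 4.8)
The plan is to combine the back-and-forth characterisation of isomorphism available for classifiable theories with a $\diamondsuit_\omega$-guessing construction of a complete invariant living in $2^\kappa$. The decisive model-theoretic input, from Shelah's structure theory and its GDST reformulation in \cite{FHK13}, is that when $T$ is classifiable (superstable, shallow, with \textsf{NDOP} and \textsf{NOTOP}) isomorphism of its models of size $\kappa$ is detected by a bounded dynamic Ehrenfeucht--Fra\"iss\'e game: for $\mathcal{A}_\eta,\mathcal{A}_\xi\models T$ of size $\kappa$ one has $\mathcal{A}_\eta\cong\mathcal{A}_\xi$ if and only if the Duplicator wins the game of length $\omega$ in which the players build an increasing chain of partial isomorphisms, each round adding $<\kappa$ elements. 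Shallowness is precisely what keeps this game of length $\omega$ and makes a win computable from countably many approximating stages; this is the point where classifiability, rather than mere stability, is used.

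First I would fix, for every $\eta\in 2^\kappa$, the canonical $\kappa$-representation $\langle(\mathcal{A}_\eta)_\alpha\mid\alpha<\kappa\rangle$ of $\mathcal{A}_\eta$ by submodels of size $<\kappa$, arranged so that $(\mathcal{A}_\eta)_\alpha$ depends only on $\eta\restriction\alpha$; this dependence is what will eventually yield continuity. Writing $S^\kappa_\omega=\{\delta<\kappa\mid\cf(\delta)=\omega\}$ for the domain of $=^2_\omega$, I would then use the sequence supplied by $\diamondsuit_\omega$ to guess, at each $\delta\in S^\kappa_\omega$, a subset $d_\delta\subseteq\delta$ coding a partial map on $(\mathcal{A}_\eta)_\delta$ together with a candidate position in the $\omega$-game. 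The reduction $f$ is defined by declaring $f(\eta)(\delta)=1$ exactly when the guess $d_\delta$ is checked to be a legal Duplicator position that closes off the back-and-forth built below $\delta$, and $f(\eta)(\alpha)=0$ for $\alpha\notin S^\kappa_\omega$. Since $(\mathcal{A}_\eta)_\delta$ is determined by $\eta\restriction\delta$ and the flag $f(\eta)(\delta)$ is read off from bounded data, the map $f$ is continuous.

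For correctness I would argue on clubs. If $\mathcal{A}_\eta\cong\mathcal{A}_\xi$, fix an isomorphism; the set of $\alpha$ at which it restricts to an isomorphism of the respective $\alpha$-approximations is a club $C$, and on $C\cap S^\kappa_\omega$ the local checks feeding $f(\eta)$ and $f(\xi)$ succeed or fail together, so $f(\eta)$ and $f(\xi)$ agree there and are $=^2_\omega$-equivalent. Conversely, if $\mathcal{A}_\eta\not\cong\mathcal{A}_\xi$ then the Duplicator loses the $\omega$-game, and the guessing property of $\diamondsuit_\omega$ ensures that the diamond sequence meets a position witnessing the loss on a stationary subset of $S^\kappa_\omega$; at those $\delta$ the flags produced by $\eta$ and $\xi$ are forced apart, so the symmetric difference is stationary and $f(\eta),f(\xi)$ are not $=^2_\omega$-equivalent. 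Together these give $\cong_T\ \reduc\ =^2_\omega$.

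The step I expect to be the main obstacle is this converse direction. Turning a single global failure of isomorphism into a \emph{stationary} set of local disagreements requires that the bounded $\mathrm{EF}$-game reflect faithfully at $\omega$-cofinal limit stages, so that a Duplicator loss is already visible through the filtration, and it requires the flag $f(\eta)(\delta)$ to be robust: isomorphic inputs must not be separated by an unlucky guess off a club, while a genuine non-isomorphism must be caught stationarily often. It is exactly to secure this two-sided behaviour of the guesses that $\diamondsuit_\omega$ is invoked in place of the weaker hypothesis $\kappa=\kappa^{<\kappa}$; verifying that the guessed positions propagate correctly through the back-and-forth is the technical heart of the argument.
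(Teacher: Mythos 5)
Your construction has the same skeleton as the actual proof of this fact in [HKM] (guess local data at each $\delta$ of cofinality $\omega$, flag a comparison between the guess and $\mathcal{A}_\eta\restriction\delta$, get one direction from a club closed under a global isomorphism and the other from stationary guessing), but two points in your write-up are genuine gaps rather than omitted routine detail. The first is that your flag is not well-defined, and on the natural literal reading it breaks the easy direction. A ``legal Duplicator position'' is a position in a game between \emph{two} structures; the reduction only has $\mathcal{A}_\eta\restriction\delta$ available, so the second structure can only be one coded by the guess $d_\delta$ itself. Moreover, for your club argument (``the local checks feeding $f(\eta)$ and $f(\xi)$ succeed or fail together'') the check must be isomorphism-invariant in the $\mathcal{A}_\eta\restriction\delta$-coordinate, hence existential: $f(\eta)(\delta)=1$ iff \emph{there exists} an isomorphism (equivalently a winning strategy for Duplicator) between the structure coded by $d_\delta$ and $\mathcal{A}_\eta\restriction\delta$. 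If instead, as you write, $d_\delta$ is required to code ``a partial map on $(\mathcal{A}_\eta)_\delta$'' that \emph{is} the position, then a guess that works for $\eta$ need not work for an isomorphic $\xi$, and isomorphic inputs would already be separated.

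The second gap is the decisive one. Let $S=\{\delta<\kappa\mid\cf(\delta)=\omega\}$. What $\diamondsuit_\omega$ gives is only this: for stationarily many $\delta\in S$ the guess equals a code of $\mathcal{A}_\eta\restriction\delta$, hence $f(\eta)(\delta)=1$. It says nothing about $f(\xi)(\delta)$ at those $\delta$. To force the flags apart you need exactly the reflection statement: if $\mathcal{A}_\eta\not\cong\mathcal{A}_\xi$ (models of a classifiable $T$ of size $\kappa$), then $\{\delta\in S\mid \mathcal{A}_\eta\restriction\delta\cong\mathcal{A}_\xi\restriction\delta\}$ is non-stationary; equivalently, stationarily many isomorphic restrictions imply isomorphism. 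This is a ZFC theorem about classifiable theories --- it is where Shelah's theorem ($L_{\infty,\kappa}$-equivalence implies isomorphism for models of classifiable theories) and the determinacy of the length-$\omega$ EF-game enter --- and it is \emph{not} supplied by the diamond, so your closing claim that $\diamondsuit_\omega$ is ``invoked to secure this two-sided behaviour'' misplaces the burden entirely. Note also that this reflection cannot be waved through by ``a Duplicator loss is already visible through the filtration'': the naive argument (take a club of $\delta$ closed under Spoiler's winning strategy, then defeat it with a local isomorphism) fails for a cardinality reason, since Spoiler's strategy must answer all partial isomorphisms with field inside $\delta$, of which there are $2^{|\delta|}\geq\kappa$ many, so no $\delta<\kappa$ need be closed under it. You correctly single out this step as ``the technical heart'', but the proposal contains no argument for it, and without it there is no proof; in the actual proof it is precisely the imported model-theoretic lemma, while the diamond is used only for the (comparatively soft) stationary-guessing half.
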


\begin{fact}[Friedman-Hyttinen-Kulikov, \cite{FHK13} Theorem 77]\label{notred}
If a first order countable complete theory over a countable vocabulary $T$ is classifiable, then $=^2_\omega\ \not\reduc\ \cong_T$.
\end{fact}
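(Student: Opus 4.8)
The plan is to prove the stronger non-reducibility $=^2_\omega\ \not\redub\ \cong_T$, from which the displayed statement follows since every continuous reduction is $\kappa$-Borel. The whole argument rests on the principle that $\kappa$-Borel reducibility cannot lower descriptive complexity: if $f$ witnessed $=^2_\omega\ \redub\ \cong_T$, then by definition
$$\eta\mathrel{=^2_\omega}\xi\iff (f(\eta),f(\xi))\in\ \cong_T,$$
so that $=^2_\omega=(f\times f)^{-1}[\cong_T]$ is the preimage of $\cong_T$ under the $\kappa$-Borel map $f\times f$. Hence $=^2_\omega$ would be $\kappa$-Borel as soon as $\cong_T$ is. It therefore suffices to establish (i) $\cong_T$ is $\kappa$-Borel for classifiable $T$, and (ii) $=^2_\omega$ is not $\kappa$-Borel.

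For (i) I would invoke Shelah's structure theory for classifiable (superstable, NDOP, NOTOP, shallow) theories. Shallowness bounds, by a fixed countable ordinal, the depth of the well-founded tree of dimension invariants that determines a model of size $\kappa$ up to isomorphism; consequently isomorphism of $\mathcal A_\eta$ and $\mathcal A_\xi$ is decided by a back-and-forth (Ehrenfeucht--Fra\"iss\'e) game of length bounded by that ordinal. Under $\kappa=\kappa^{<\kappa}$ the existence of a winning strategy in such a bounded game is a $\kappa$-Borel condition on $(\eta,\xi)$, so $\cong_T$ is $\kappa$-Borel. This is precisely the content imported from \cite{FHK13}, and it is the classification-theoretic heart of the statement.

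For (ii) it is enough to exhibit a single non-$\kappa$-Borel section of $=^2_\omega$. Working in $2^\kappa$ and writing $S_\omega=\{\alpha<\kappa\mid \cf(\alpha)=\omega\}$, the equivalence class of the constant function $\mathbf 0$ is
$$[\mathbf 0]_{=^2_\omega}=\{\eta\in 2^\kappa\mid \{\alpha\in S_\omega\mid \eta(\alpha)=1\}\ \text{is non-stationary}\}.$$
Since our cardinal hypotheses force $\kappa\ge\omega_2$, the set $S_\omega$ is stationary, and the family of characteristic functions of non-stationary subsets of $S_\omega$ is not $\kappa$-Borel; indeed it is $\Sigma_1^1$-complete, as non-stationarity is the $\Sigma_1^1$ assertion ``there exists a club disjoint from the set,'' while completeness follows by the usual coding of ill-founded trees into non-stationary subsets of $S_\omega$. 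Thus $[\mathbf 0]_{=^2_\omega}$ is not $\kappa$-Borel; were $=^2_\omega$ itself $\kappa$-Borel, this section would be $\kappa$-Borel as well, a contradiction.

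Combining (i) and (ii) with the preimage observation completes the argument: no $\kappa$-Borel $f$ can reduce $=^2_\omega$ to $\cong_T$, for otherwise $=^2_\omega$ would be $\kappa$-Borel. I expect step (i) to be the genuine obstacle. The estimate in (ii) is a self-contained, purely descriptive-set-theoretic computation, whereas the $\kappa$-Borelness of $\cong_T$ encodes the full strength of the classification of shallow superstable theories; this is exactly why the statement is quoted from \cite{FHK13} rather than reproved here.
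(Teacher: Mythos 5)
The decisive gap is in your step (i), and it is visible in your parenthetical gloss: you take ``classifiable'' to mean superstable, NDOP, NOTOP \emph{and shallow}. In this paper, as in \cite{FHK13} (whose Theorem~77 is the statement being quoted) and in the Hyttinen--Kulikov--Moreno papers it builds on, \emph{classifiable} means superstable with NDOP and NOTOP; shallowness is an additional hypothesis. The distinction is not cosmetic: one of the main results of \cite{FHK13} is that $\cong_T$ is $\kappa$-Borel \emph{if and only if} $T$ is classifiable \emph{and} shallow, so for a deep classifiable theory $T$ (and such theories exist) the relation $\cong_T$ is provably \emph{not} $\kappa$-Borel. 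Your argument therefore proves the Fact only for the shallow subclass, whereas the paper applies it to arbitrary classifiable theories (e.g.\ in the corollary following Theorem~\ref{maincor}). The natural repair --- run the same argument with $\Delta^1_1$ in place of Borel, invoking the theorem of \cite{FHK13} that $\cong_T$ is $\Delta^1_1$ for every classifiable $T$ --- also fails, because the companion claim ``$=^2_\omega$ is not $\Delta^1_1$'' is not a theorem of \zfc: by canary-tree constructions (Mekler--Shelah; Hyttinen--Rautila) it is consistent, with $\kappa^{<\kappa}=\kappa$, that stationarity of subsets of $S^\kappa_\omega=\{\alpha<\kappa\mid \cf(\alpha)=\omega\}$ also admits a $\Sigma^1_1$ definition, i.e.\ that $=^2_\omega$ is $\Delta^1_1$. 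Consequently \emph{no} argument that merely compares descriptive complexity classes can establish this Fact in \zfc. The proof in \cite{FHK13} is of a different nature: it rests on Shelah's theorem that $L_{\infty,\kappa}$-equivalent models of a classifiable theory are isomorphic, so that non-isomorphism of the two image models is witnessed by a winning strategy in an Ehrenfeucht--Fra\"iss\'e game and is thereby absolute under suitable $<\kappa$-distributive forcing; one then derives a contradiction by forcing (club-shooting) to destroy the stationarity of the set on which the two given functions differ. Classifiability enters through this absoluteness, not through any Borelness computation for $\cong_T$.

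A secondary, repairable, problem is your justification of step (ii). That non-stationarity of subsets of $S^\kappa_\omega$ is $\Sigma^1_1$-\emph{complete} is again not a \zfc{} fact ``by the usual coding of ill-founded trees'': it is exactly the kind of statement that needs extra axioms --- it is what Theorem~B of this paper forces via $\dl^*_S(\Pi^1_2)$ --- and it fails in the models just mentioned in which $=^2_\omega$ is $\Delta^1_1$ (a $\Sigma^1_1$-complete set cannot be $\Pi^1_1$, since $\Sigma^1_1\neq\Pi^1_1$ is provable in \zfc). What is true in \zfc, and is all you need here, is the weaker statement that the class $[\mathbf{0}]_{=^2_\omega}$ is not $\kappa$-Borel; the standard proof is by Baire category rather than by coding: under $\kappa^{<\kappa}=\kappa$ every $\kappa$-Borel set has the $\kappa$-Baire property, whereas in the Banach--Mazur game of length $\kappa$ the player who moves at limit stages controls the values of the outcome on a club of closure points, and can therefore steer the outcome below any condition either into $[\mathbf{0}]_{=^2_\omega}$ (play $0$ at every closure point) or into its complement (play $1$ at every closure point of cofinality $\omega$); hence this class fails the Baire property (this is essentially the Halko--Shelah argument that the club filter is not Borel; see also \cite{FHK13}). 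With (ii) repaired in this way and your step (iii), which is correct, your proposal does yield the Fact for classifiable \emph{shallow} theories; what is missing is the deep case, and for it the complexity-theoretic approach cannot be salvaged.
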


\begin{cor}
Suppose $\kappa=\lambda^+=2^\lambda$ and $\lambda^\omega=\lambda$. If $T_1$ is a countable complete classifiable theory, and $T_2$ is a countable complete unsuperstable theory, then $\cong_{T_1}\ \reduc\ \cong_{T_2}$ and $\cong_{T_2}\ \not\reduc\ \cong_{T_1}$.
\end{cor}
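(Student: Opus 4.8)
The plan is to derive both statements by composing the continuous reductions supplied by Theorem \ref{maincor} and Facts \ref{HKM1} and \ref{notred}, after first checking that the cardinal hypotheses guarantee the diamond principle needed to invoke Fact \ref{HKM1}. None of the three ingredients is proved afresh; the work is in arranging them and in justifying $\diamondsuit_\omega$.

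I would begin by verifying $\diamondsuit_\omega$. Note that $\lambda^\omega=\lambda$ forces $\cf(\lambda)>\omega$, so $\lambda$ is uncountable, and the assumption $\kappa=\lambda^+=2^\lambda$ is exactly the statement $2^\lambda=\lambda^+$. By Shelah's theorem that, for uncountable $\lambda$, $2^\lambda=\lambda^+$ implies $\diamondsuit_{\lambda^+}(S)$ for every stationary $S\subseteq\lambda^+$, I would apply this to the stationary set $S=\{\alpha<\kappa\mid \cf(\alpha)=\omega\}$ and conclude that $\diamondsuit_\omega$ holds.

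For the reduction $\cong_{T_1}\ \reduc\ \cong_{T_2}$, I would use that $T_1$ is classifiable together with the now-available $\diamondsuit_\omega$ to obtain $\cong_{T_1}\ \reduc\ =^2_\omega$ from Fact \ref{HKM1}, and that $T_2$ is unsuperstable to obtain $=^2_\omega\ \reduc\ \cong_{T_2}$ from Theorem \ref{maincor}. Since the composition of two continuous reductions is again a continuous reduction, chaining these immediately yields $\cong_{T_1}\ \reduc\ \cong_{T_2}$.

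For the non-reduction $\cong_{T_2}\ \not\reduc\ \cong_{T_1}$, I would argue by contradiction: if $\cong_{T_2}\ \reduc\ \cong_{T_1}$ held, then composing it with $=^2_\omega\ \reduc\ \cong_{T_2}$ (again Theorem \ref{maincor}) would give $=^2_\omega\ \reduc\ \cong_{T_1}$, contradicting Fact \ref{notred}, which asserts $=^2_\omega\ \not\reduc\ \cong_{T_1}$ for the classifiable theory $T_1$. The only step that is not pure bookkeeping with compositions is the verification of $\diamondsuit_\omega$, so I expect the appeal to Shelah's characterization of diamond on successors of uncountable cardinals to be the main point requiring care.
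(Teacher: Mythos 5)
Your proposal is correct and takes essentially the same route as the paper: derive $\cf(\lambda)>\omega$ from $\lambda^\omega=\lambda$, invoke Shelah's diamond theorem to get $\diamondsuit_\omega$, then chain Fact \ref{HKM1} with Theorem \ref{maincor} for the positive reduction and use Fact \ref{notred} (via composition, by contradiction) for the negative one. One point of care: the form of Shelah's theorem you quote is too strong and is in fact consistently false --- for uncountable $\lambda$ with $2^\lambda=\lambda^+$, diamond is guaranteed only on stationary sets $S\subseteq\{\alpha<\lambda^+\mid \cf(\alpha)\neq\cf(\lambda)\}$, not on every stationary $S\subseteq\lambda^+$; your application is nevertheless valid precisely because $\cf(\lambda)>\omega$ places $\{\alpha<\kappa\mid\cf(\alpha)=\omega\}$ inside that class, so this hypothesis is doing more than ensuring $\lambda$ is uncountable, and the citation should be stated accordingly.
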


\begin{proof}
Since $\lambda^\omega=\lambda$, $cf(\lambda)>\omega$. By \cite{Sh1} we know that if $\kappa=\lambda^+=2^\lambda$ and $cf(\lambda)>\omega$, then $\diamondsuit_\omega$ holds. The proof follows from Theorem \ref{maincor}, Fact \ref{HKM1},  and Fact \ref{notred}.
\end{proof}

We will finish this section with a corollary about $\Sigma^1_1$-completeness. Before we state the corollary we need to recall some definitions from \cite{FMR} in particular the definition of $\dl^*_S(\Pi^1_2)$. For more on $\dl^*_S(\Pi^1_2)$ see \cite{FMR}.

A $\Pi^{1}_{2}$-sentence $\phi$ is a formula of the form $\forall X\exists Y\varphi$ where $\varphi$ is a first-order sentence over a relational language $\mathcal L$ as follows:
\begin{itemize}
\item $\mathcal L$ has a predicate symbol $\epsilon$ of arity $2$;
\item $\mathcal L$ has a predicate symbol $\mathbb X$ of arity $m({\mathbb X})$;
\item $\mathcal L$ has a predicate symbol $\mathbb Y$ of arity $m({\mathbb Y})$;
\item $\mathcal L$ has infinitely many predicate symbols $(\mathbb B_n)_{n\in \omega}$, each $\mathbb B_n$ is of arity $m(\mathbb B_n)$.
\end{itemize}

\begin{defn} For sets $N$ and $x$, we say that \emph{$N$ sees $x$} iff
$N$ is transitive, p.r.-closed, and $x\cup\{x\}\s N$.
\end{defn}

Suppose that a set $N$ sees an ordinal $\alpha$,
and that $\phi=\forall X\exists Y\varphi$ is a $\Pi^{1}_{2}$-sentence, where $\varphi$ is a first-order sentence in the above-mentioned language $\mathcal L$.
For every sequence $(B_n)_{n\in\omega}$ such that, for all $n\in\omega$, $B_n\s \alpha^{m(\mathbb B_n)}$,
we write
$$\langle \alpha,{\in}, (B_{n})_{n\in \omega} \rangle \models_N \phi$$
to express that the two hold:
\begin{enumerate}
\item $(B_{n})_{n\in \omega} \in N$;
\item $\langle N,\in\rangle\models (\forall X\subseteq \alpha^{m(\mathbb X)})(\exists Y\subseteq \alpha^{m(\mathbb Y)})[\langle \alpha,{\in}, X, Y, (B_{n})_{n\in \omega}  \rangle\models \varphi]$,
where:
\begin{itemize}
\item    $\in$ is the interpretation of $\epsilon$;
\item $X$ is the interpretation of $\mathbb X$;
\item  $Y$ is the interpretation of $\mathbb Y$, and
\item for all $n\in\omega$,  $B_n$ is the interpretation of $\mathbb B_n$.
\end{itemize}
\end{enumerate}

\begin{defn}\label{reflectingdiamond}   Let $\kappa$ be a regular and uncountable cardinal, and $S\s\kappa$ stationary.

    $\dl^*_S(\Pi^1_2)$ asserts the existence of a sequence $\vec N=\langle N_\alpha\mid\alpha\in S\rangle$ satisfying the following:
    
    \begin{enumerate}
        \item for every $\alpha\in S$, $N_\alpha$ is a set of cardinality $<\kappa$ that sees $\alpha$;
        \item for every $X\s\kappa$, there exists a club $C\s\kappa$ such that, for all $\alpha\in C \cap S$, $X\cap\alpha\in N_\alpha$;
        \item whenever $\langle \kappa,{\in},(B_n)_{n\in\omega}\rangle\models\phi$,
        with $\phi$ a $\Pi^1_2$-sentence,
        there are stationarily many $\alpha\in S$ such that $|N_\alpha|=|\alpha|$ and
        $\langle \alpha,{\in},(B_n\cap(\alpha^{m(\mathbb B_n)}))_{n\in\omega}\rangle\models_{N_\alpha}\phi$.
    \end{enumerate}
 \end{defn}

\begin{fact}[Fernandes-Moreno-Rinot, \cite{FMR} Theorem C]\label{comp-rels}
If $\dl^*_S(\Pi^1_2)$ holds for $S=\{\alpha<\kappa\mid cf(\alpha)=\omega\}$, then $=^2_\omega$ is $\Sigma^1_1$-complete.
\end{fact}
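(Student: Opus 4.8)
The statement asserts two things at once, matching the two clauses in the definition of $\Sigma^1_1$-completeness, and I would separate them. \emph{First}, that $=^2_\omega$ is a $\Sigma^1_1$ equivalence relation is the routine half. One writes
$$\eta \mathrel{=^2_\omega}\xi \iff \exists C\in 2^\kappa\,\big[\,C\text{ is the characteristic function of a club}\ \wedge\ \forall\alpha\in S\,(C(\alpha)=1\rightarrow \eta(\alpha)=\xi(\alpha))\,\big],$$
and observes that the bracketed matrix is Borel in the triple $(\eta,\xi,C)$, so $=^2_\omega$ — being the projection along $C$ of a Borel set — is $\Sigma^1_1$ (fold the Borel predicate into a closed one by a further existential quantifier to match the definition literally). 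Reflexivity and symmetry are trivial, and transitivity uses only that the intersection of two clubs is a club.

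\emph{Second}, and this is the substance, I would fix an arbitrary $\Sigma^1_1$ equivalence relation $E$ and build a continuous reduction of $E$ to $=^2_\omega$. Code $E$ by a downward closed tree $T$, so that
$$\eta\mathrel E\xi \iff \exists\zeta\in\kappa^\kappa\,\forall\beta<\kappa\,\big((\eta\restriction\beta,\xi\restriction\beta,\zeta\restriction\beta)\in T\big).$$
The plan is to use the guessing sequence $\vec N=\langle N_\alpha\mid\alpha\in S\rangle$ furnished by $\dl^*_S(\Pi^1_2)$ to define, for $\alpha\in S$, the value $f(\eta)(\alpha)$ as a \emph{canonical local $E$-invariant of $\eta\restriction\alpha$ computed inside $N_\alpha$}: concretely, if $\eta\restriction\alpha\in N_\alpha$, let $f(\eta)(\alpha)$ be a code for the $N_\alpha$-least representative of the local $E$-class of $\eta\restriction\alpha$ (computed from $T\cap N_\alpha$), and let it be $0$ otherwise. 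This produces a $\kappa$-valued function, which I would then compress to a genuine element of $2^\kappa$ through the standard Borel reduction of $=^\kappa_\omega$ to $=^2_\omega$ (spreading each coordinate into a block, preserving stationary agreement). The decisive feature is that $f(\eta)(\alpha)$ depends on $\eta$ alone, yet the comparison of $f(\eta)$ and $f(\xi)$ detects $E$.

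For the \emph{agreement} direction, suppose $\eta\mathrel E\xi$ with global witness $\zeta$. By clause (2) of $\dl^*_S(\Pi^1_2)$ there is a club of $\alpha\in S$ with $\eta\restriction\alpha,\xi\restriction\alpha,\zeta\restriction\alpha\in N_\alpha$; at each such $\alpha$ the segment $\zeta\restriction\alpha$ is a genuine local witness, so $N_\alpha$ places $\eta\restriction\alpha$ and $\xi\restriction\alpha$ in the same local class and computes the same representative, giving $f(\eta)(\alpha)=f(\xi)(\alpha)$. Hence the disagreement set meets $S$ non-stationarily and $f(\eta)\mathrel{=^2_\omega}f(\xi)$. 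For the \emph{separation} direction, suppose $\neg(\eta\mathrel E\xi)$. Then the $\Pi^1_1$ — hence $\Pi^1_2$ — sentence $\phi$ asserting ``for every $X$ (read as a candidate $\zeta$) there is $\beta$ with $(\eta\restriction\beta,\xi\restriction\beta,X\restriction\beta)\notin T$'' holds in $\langle\kappa,{\in},(B_n)\rangle$, where the $B_n$ code $\eta$, $\xi$ and $T$; by clause (3) there are stationarily many $\alpha\in S$ with $|N_\alpha|=|\alpha|$ at which $\phi$ reflects into $N_\alpha$. At each such $\alpha$, $N_\alpha$ sees \emph{no} local witness linking $\eta\restriction\alpha$ to $\xi\restriction\alpha$, so it computes distinct local representatives and $f(\eta)(\alpha)\neq f(\xi)(\alpha)$. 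Thus the disagreement set meets $S$ stationarily and $\neg(f(\eta)\mathrel{=^2_\omega}f(\xi))$. Continuity of $f$ is automatic, since $f(\eta)(\alpha)$ is computed from $\eta\restriction\alpha$ and the fixed parameter $N_\alpha$.

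The main obstacle, absorbing most of the work, is the \emph{one-variable} design of $f$: the value $f(\eta)(\alpha)$ must be a faithful, $E$-class-canonical invariant of $\eta\restriction\alpha$ relative to $N_\alpha$, so that clause (2) forces agreement on a club for equivalent inputs while the sentence reflected by clause (3) forces disagreement for inequivalent ones exactly at the reflection points. Making ``canonical invariant'' precise — in particular arranging that the local computation in $N_\alpha$ agrees with reality (this is where $|N_\alpha|=|\alpha|$ is needed) and that a genuine $\Pi^1_2$ sentence, with its $\forall X\exists Y$ form, rather than a bare first-order statement, is what gets reflected — together with the routine compression from $=^\kappa_\omega$ to $=^2_\omega$, is where the full strength of $\dl^*_S(\Pi^1_2)$ enters and constitutes the technical heart of the argument.
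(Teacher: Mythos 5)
Note first that the paper you are reading does not prove this statement at all: it is imported verbatim as a black box from Fernandes--Moreno--Rinot \cite{FMR} (their Theorem~C), so the benchmark is that paper's proof, not anything internal here. Your high-level architecture does match the known scheme: a one-variable value $f(\eta)(\alpha)$ computed continuously from $\eta\restriction\alpha$ and the fixed parameter $N_\alpha$, with clause (2) of $\dl^*_S(\Pi^1_2)$ driving club agreement for equivalent inputs and clause (3), applied to a reflected $\Pi^1_2$ sentence, driving stationary disagreement for inequivalent ones. The $\Sigma^1_1$-ness half is also fine. But the core device you propose --- the ``$N_\alpha$-least representative of the local $E$-class of $\eta\restriction\alpha$'' --- has a genuine gap. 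The local relation ``some $\zeta\restriction\alpha$ links $p$ and $q$ through $T$'' need not be symmetric or transitive, so ``the local $E$-class'' is not a class and the least representative is not canonical. Your agreement direction silently uses this canonicity at \emph{club}-many $\alpha$: a local witness linking $\eta\restriction\alpha$ to $\xi\restriction\alpha$ does not give $\mathrm{least}(\eta\restriction\alpha)=\mathrm{least}(\xi\restriction\alpha)$ unless local linkage is transitive and symmetric on $N_\alpha$'s candidates. The only tool available to repair that --- reflecting the $\Pi^1_2$ sentence ``$E$ is an equivalence relation'' via clause (3) --- yields only \emph{stationarily} many good $\alpha$, and only for fixed parameters, whereas the agreement direction needs club-many $\alpha$ uniformly. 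This is not a presentational issue; it is exactly the needle the real proof must thread, and it is why FMR do not use representatives at all.

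Two further concrete problems. In your separation direction, at a reflection point $N_\alpha$ may fail to see a local witness even from $\eta\restriction\alpha$ to \emph{itself} (local reflexivity is again a $\Pi^1_2$ matter), in which case your fallback value $0$ is assigned to both $f(\eta)(\alpha)$ and $f(\xi)(\alpha)$ and ``distinct representatives'' does not follow. And the compression step from $\kappa$-valued to $2$-valued functions is not justified as stated: ``spreading each coordinate into a block'' places the bits coding $\eta(\alpha)$ at ordinals whose membership in $S=\{\alpha<\kappa\mid \cf(\alpha)=\omega\}$ bears no relation to $\alpha$, so stationarity of the agreement/disagreement sets is not preserved by that construction; if you want to pass through $=^\kappa_\omega$ you need an actual argument, not a block coding. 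The proof in \cite{FMR} sidesteps all three issues by a materially different mechanism: it defines a one-bit-per-level invariant directly and in fact proves something stronger --- every $\Sigma^1_1$ \emph{quasi-order} on $2^\kappa$ reduces to inclusion modulo the nonstationary ideal restricted to $S$ --- from which $\Sigma^1_1$-completeness of $=^2_\omega$ follows by symmetrization. So: right skeleton, correct identification of where the difficulty lives, but the device you chose to carry the weight would break in both directions, and naming the difficulty is not the same as resolving it.
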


\begin{cor}
If $\dl^*_S(\Pi^1_2)$ holds for $S=\{\alpha<\kappa\mid cf(\alpha)=\omega\}$, and $T$ is a countable complete unsuperstable theory, then $\cong_T$ is $\Sigma^1_1$-complete. 
\end{cor}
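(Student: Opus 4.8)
The plan is to read this off as an immediate consequence of Theorem~\ref{maincor} and Fact~\ref{comp-rels}, together with the elementary observation that the isomorphism relation of any first-order theory is a $\Sigma^1_1$ set. Recall that to show $\cong_T$ is $\Sigma^1_1$-complete we must verify two things: that $\cong_T$ is itself $\Sigma^1_1$, and that every $\Sigma^1_1$ equivalence relation is $\kappa$-Borel reducible to it.

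First I would check that $\cong_T$ is $\Sigma^1_1$. For a fixed first-order sentence $\phi$ the predicate ``$\mathcal{A}_\eta\models\phi$'' is $\kappa$-Borel in $\eta$: each first-order quantifier ranges over the domain $\kappa$, so it unfolds into a union or an intersection of length $\kappa$ of basic conditions on $\eta$ (through the predicates $Q_m^{\mathcal{A}_\eta}$), and the class of $\kappa$-Borel sets is closed under such operations. Hence ``$\mathcal{A}_\eta\models T$'' is $\kappa$-Borel (a length-$\omega$ intersection over the axioms of $T$), and so are the conditions ``$\mathcal{A}_\eta\models T\wedge\mathcal{A}_\xi\models T$'' and ``$\mathcal{A}_\eta\not\models T\wedge\mathcal{A}_\xi\not\models T$''. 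The relation ``$\mathcal{A}_\eta\cong\mathcal{A}_\xi$'' is $\Sigma^1_1$: coding a bijection $\kappa\to\kappa$ by an element of $\kappa^\kappa$, being an isomorphism is a closed condition on the triple, and isomorphism is its projection. Since $\Sigma^1_1$ is closed under intersection with $\kappa$-Borel sets and under finite unions, the whole relation $\cong_T$ is $\Sigma^1_1$.

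For completeness, let $R$ be an arbitrary $\Sigma^1_1$ equivalence relation on $\kappa^\kappa$. The hypothesis $\dl^*_S(\Pi^1_2)$ for $S=\{\alpha<\kappa\mid cf(\alpha)=\omega\}$ feeds directly into Fact~\ref{comp-rels}, giving that $=^2_\omega$ is $\Sigma^1_1$-complete; in particular $R\redub\ =^2_\omega$. On the other hand $T$ is unsuperstable, so Theorem~\ref{maincor} supplies a continuous---hence $\kappa$-Borel---reduction $=^2_\omega\ \reduc\ \cong_T$, whence $=^2_\omega\ \redub\ \cong_T$. Composing the two $\kappa$-Borel reductions yields $R\redub\ \cong_T$; the composite is again a reduction and is $\kappa$-Borel because the preimage of an open set under it is, reading inside out, the preimage under the first reduction of a $\kappa$-Borel set, which is $\kappa$-Borel. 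As $R$ was arbitrary, together with the first paragraph this shows $\cong_T$ is $\Sigma^1_1$-complete.

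The only genuine content is the first paragraph. I expect the main (minor) obstacle to be pinning down the complexity of $\cong_T$ carefully---arguing that first-order satisfaction over a $\kappa$-sized structure remains $\kappa$-Borel, and that $\Sigma^1_1$ is preserved under the relevant projection and Boolean operations---so that $\cong_T$ lands exactly in $\Sigma^1_1$. Once that is settled, the completeness half is nothing more than the transitivity of $\kappa$-Borel reducibility applied to Fact~\ref{comp-rels} and Theorem~\ref{maincor}.
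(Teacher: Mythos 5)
Your proof is correct and follows essentially the same route as the paper, whose entire proof is the one-line composition of Fact~\ref{comp-rels} with Theorem~\ref{maincor}. The extra paragraph verifying that $\cong_T$ is itself $\Sigma^1_1$ is a standard fact in GDST that the paper leaves implicit, so including it is fine but not a divergence in approach.
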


\begin{proof}
It follows from Fact \ref{comp-rels} and Theorem \ref{maincor}.
\end{proof}

\begin{fact}[Fernandes-Moreno-Rinot, \cite{FMR20} Lemma 4.10 and Proposition 4.14]\label{forcing-comp-rels}
There exists a $<\kappa$-closed $\kappa^+$-cc forcing extension in which $\dl^*_S(\Pi^1_2)$ holds.
\end{fact}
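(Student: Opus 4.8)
The plan is to realise $\dl^*_S(\Pi^1_2)$ in a generic extension by a single poset $\mathbb P$ that threads together transitive collapses of elementary submodels, and then to check the three clauses of Definition~\ref{reflectingdiamond} together with the two structural requirements. Fix a large regular $\chi$. Let a condition of $\mathbb P$ be a bounded approximation $p=\langle N^p_\alpha\mid\alpha\in S\cap\gamma_p\rangle$ with $\gamma_p<\kappa$, where each $N^p_\alpha$ is a set of cardinality $<\kappa$ that sees $\alpha$; call $N^p_\alpha$ \emph{good} if it is the transitive collapse of some $M\prec\langle H(\chi),\in\rangle$ with $M\cap\kappa=\alpha$ and $|M|=|\alpha|$. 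Order $\mathbb P$ by end-extension. The poset is $<\kappa$-closed: the union of an increasing chain of conditions of length $<\kappa$ is again a condition, since the supremum of the relevant $\gamma_p$ stays below $\kappa$ by the regularity of $\kappa$; in particular $\mathbb P$ adds no new bounded subsets of $\kappa$. For the chain condition, the standing hypothesis $\kappa=\kappa^{<\kappa}$ gives $|H(\kappa)|=2^{<\kappa}=\kappa$, so there are only $\kappa$ admissible values for a single $N_\alpha$ and hence only $\kappa$ conditions altogether; thus $\mathbb P$ is $\kappa^+$-cc outright.

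Clause (1) of Definition~\ref{reflectingdiamond} is part of the definition of a condition and is preserved under unions, so the generic sequence $\vN$ inherits it. The engine for the other two clauses is the following \emph{collapse observation}: if $N_\alpha$ is the transitive collapse of $M\prec H(\chi)$ with $M\cap\kappa=\alpha$ and if $x\in M$, then the Mostowski collapse sends $\kappa\mapsto\alpha$ and $x$ to its $\alpha$-localisation; in particular, when $(B_n)_{n\in\omega}\in M$ and $H(\chi)\models\bigl(\langle\kappa,\in,(B_n)_n\rangle\models\phi\bigr)$ for a $\Pi^1_2$-sentence $\phi$, elementarity together with the collapse yields $\langle\alpha,\in,(B_n\cap(\alpha^{m(\mathbb B_n)}))_n\rangle\models_{N_\alpha}\phi$, and when $X\in M$ is a subset of $\kappa$ the collapse places $X\cap\alpha$ into $N_\alpha$. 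To obtain clause (2), fix a $\mathbb P$-name $\dot X$ for a subset of $\kappa$; using $<\kappa$-closure one shows that for every condition and every $\gamma$ it is dense to reach some $\alpha\in S$ above $\gamma$ carrying a good $N_\alpha$ that is the collapse of a model having already decided $\dot X\cap\alpha$ (a set that, by closure, lies in the ground model). A standard catching-up argument then shows the generic forces the set of such $\alpha$ to contain a club, which is clause (2).

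Clause (3) is the crux and the main obstacle. Let $\phi$ be a $\Pi^1_2$-sentence and $(B_n)_n$ parameters with $\langle\kappa,\in,(B_n)_n\rangle\models\phi$ in the extension, and let $\dot C$ name an arbitrary club; we must force some $\alpha\in\dot C\cap S$ to be good in the sense of clause (3). Working below an arbitrary condition, I would use $<\kappa$-closure to build a descending $\omega$-chain of conditions with strictly increasing heights whose supremum is an ordinal $\alpha$ of cofinality $\omega$ (hence $\alpha\in S$), arranging along the way that the decided part of $\dot C$ is cofinal in $\alpha$ and that the parameters $(B_n)$ and the sentence $\phi$ are reflected into the elementary submodel whose collapse is installed as $N_\alpha$; the collapse observation then delivers $\langle\alpha,\in,(B_n\cap(\alpha^{m(\mathbb B_n)}))_n\rangle\models_{N_\alpha}\phi$ with $|N_\alpha|=|\alpha|$. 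Genericity upgrades ``for every club $\dot C$'' into ``stationarily many $\alpha$''. The delicate point is that $\phi$, $(B_n)$ and $\dot C$ live in $V^{\mathbb P}$, not in $V$, so the submodel $M$ must be chosen in the ground model to capture the relevant \emph{names} together with the forcing relation for the statement ``$\langle\kappa,\in,(B_n)_n\rangle\models\phi$''; reconciling this with the demand that the reflection be internal to the small model $N_\alpha$ (so that the universal second-order quantifier of a $\Pi^1_2$-sentence is witnessed \emph{inside} $N_\alpha$, not merely in $V$), while simultaneously preserving the approximation property of clause (2), is exactly where the construction must be engineered with care, and is the technical heart of \cite{FMR20}.
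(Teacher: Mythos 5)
First, a point of context: the paper does not prove this Fact at all --- it is imported verbatim from \cite{FMR20} (Lemma 4.10 and Proposition 4.14) as a black box, so there is no internal proof to compare against; what you have written is an attempted reconstruction of the external argument. The structural skeleton is reasonable and its easy parts are correct: conditions as bounded approximations ordered by end-extension, $<\kappa$-closure from the regularity of $\kappa$ (hence no new bounded subsets of $\kappa$), and the $\kappa^+$-cc obtained trivially from $|\mathbb P|\leq\kappa$, since each $N_\alpha$ is a transitive set of size $<\kappa$, hence an element of $H(\kappa)$, and $\kappa^{<\kappa}=\kappa$. But the two substantive verification steps have genuine gaps.

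For clause (2) of Definition \ref{reflectingdiamond}, your argument fails for the poset as you defined it: conditions are only required to consist of sets that see their index, so the very density argument you invoke can be run against you. Given $X\s\kappa$ and a name $\dot C$ for a club, one extends any condition to catch some $\alpha\in\dot C\cap S$ of cofinality $\omega$ and installs there an $N_\alpha$ omitting $X\cap\alpha$ (say the p.r.-closure of $\alpha\cup\{\alpha\}$); hence the bad set $\{\alpha\in S\mid X\cap\alpha\notin N_\alpha\}$ is forced to be stationary and clause (2) fails outright. End-extension density can only ever produce \emph{stationarily} many capture points, whereas clause (2) demands \emph{club} many; the capture must therefore be built into the definition of a condition (commitments that propagate to every later model), which your poset lacks, and the sentence ``a standard catching-up argument then shows the generic forces the set of such $\alpha$ to contain a club'' is precisely the step that does not exist. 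For clause (3), you diagnose the difficulty accurately but do not resolve it, and it does not reduce to your ``collapse observation'', which is valid only for parameters in $V$: when $B_n\in V[G]\setminus V$, the trace $B_n\cap\alpha^{m(\mathbb B_n)}$ is a union of $\omega$ many fragments decided along your chain and is in general not of the form $X'\cap\alpha$ for any $X'\in M$, so it need not belong to the collapse of a ground-model $M\prec H(\chi)$ at all; and even when it does, a first-order $\varphi$ need not reflect from $\kappa$ to $\alpha$, so the existence of witnesses $Y$ \emph{inside} $N_\alpha$ for every $X\in N_\alpha\cap\mathcal P(\alpha^{m(\mathbb X)})$ is exactly what the forcing must engineer --- no mechanism is proposed, and you concede the point by deferring to \cite{FMR20}. (Note also that the division of labour is the reverse of what you suggest: clause (3) only asks for stationarily many $\alpha$ per instance, so a per-instance density argument is the right \emph{shape} there, while it is clause (2), with its club-many requirement, that cannot be obtained by density at all.) As it stands, the proposal is an outline of a plausible poset with one clause argued incorrectly and the other left unproven, not a proof of the Fact.
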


\begin{cor}
There exists a $<\kappa$-closed $\kappa^+$-cc forcing extension in which for all countable complete unsuperstable theory $T$, $\cong_T$ is $\Sigma^1_1$-complete. 
\end{cor}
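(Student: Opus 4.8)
The plan is to obtain the conclusion by combining Fact~\ref{forcing-comp-rels} with the corollary immediately preceding this one, so the argument is essentially a two-line assembly once the bookkeeping is in place. First I would invoke Fact~\ref{forcing-comp-rels} to fix a $<\kappa$-closed $\kappa^+$-cc forcing notion $\mathbb{P}$ together with a generic filter $G$ over $V$ such that, in $V[G]$, the principle $\dl^*_S(\Pi^1_2)$ holds for $S=\{\alpha<\kappa\mid cf(\alpha)=\omega\}$. This is the single nontrivial ingredient, and it is cited rather than proved.

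The second step is to verify that the standing cardinal assumptions on $\kappa$ survive passage to $V[G]$, since the earlier results (the colorable linear order $I$, the ordered coloured trees $A^f$, and the Ehrenfeucht-Mostowski construction of the models $\mathcal{A}^f$) are all proved under the hypotheses that $\kappa$ is regular uncountable with $\kappa=\kappa^{<\kappa}$ and $\gamma^\omega<\kappa$ for all $\gamma<\kappa$. Because $\mathbb{P}$ is $<\kappa$-closed it adds no new sequences of length $<\kappa$, whence it preserves $\kappa$, fixes the values $\gamma^\omega$ for $\gamma<\kappa$, and keeps the computation $\kappa^{<\kappa}=\kappa$ intact; being $\kappa^+$-cc it preserves all cardinals $\ge\kappa^+$. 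Consequently $V[G]$ still models the full package of assumptions on $\kappa$, so the entire machinery of the previous sections is available there and the preceding corollary applies verbatim inside $V[G]$.

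Finally I would argue entirely within $V[G]$: for an arbitrary countable complete unsuperstable theory $T$, the preceding corollary combined with $\dl^*_S(\Pi^1_2)$ yields that $\cong_T$ is $\Sigma^1_1$-complete. As $T$ was arbitrary, this holds simultaneously for every such $T$ in $V[G]$, which is exactly the assertion. The only place demanding genuine care is the preservation check of the second step; beyond that the proof is a direct citation of Fact~\ref{forcing-comp-rels} and of the previous corollary, so I expect no real obstacle.
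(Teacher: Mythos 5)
Your proposal is correct and follows essentially the same route the paper intends: the corollary is obtained by combining Fact~\ref{forcing-comp-rels} with the immediately preceding corollary (which itself rests on Fact~\ref{comp-rels} and Theorem~\ref{maincor}), applied inside the forcing extension. Your explicit verification that the $<\kappa$-closed $\kappa^+$-cc forcing preserves the standing hypotheses on $\kappa$ (regularity, $\kappa=\kappa^{<\kappa}$, and $\gamma^\omega<\kappa$ for all $\gamma<\kappa$) is a point the paper leaves implicit, and it is exactly the right check to make.
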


\section*{Acknowledgments}
This research was partially supported by the European Research Council (grant agreement ERC-2018-StG 802756). This research was partially supported by the Vilho, Yrj\"o and Kalle V\"ais\"al\"a Foundation of the Finnish Academy of Science and Letters. This research was partially supported by the the Austrian Science Fund FWF, grant I 3709-N35 and grant M 3210-N.
 The author wants to express his gratitude Tapani Hyttinen for suggesting the topic during the Ph.D. studies of the author, his advice and fruitful discussions. The author wants to express his gratitude to Gabriel Fernandes and Assaf Rinot for the fruitful discussions on trees.

\providecommand{\bysame}{\leavevmode\hbox to3em{\hrulefill}\thinspace}
\providecommand{\MR}{\relax\ifhmode\unskip\space\fi MR }
\providecommand{\MRhref}[2]{%
  \href{http://www.ams.org/mathscinet-getitem?mr=#1}{#2} }
\providecommand{\href}[2]{#2}

\end{document}